\newtheorem{thm}{Theorem}[section]
\newtheorem{lem}[thm]{Lemma}
\newtheorem{cor}[thm]{Corollary}
\newtheorem{prop}[thm]{Proposition}
\DeclareMathAlphabet{\mathpzc}{OT1}{pzc}{m}{it}
\numberwithin{equation}{section}
\newcommand{\bqn}{\begin{equation}}
\newcommand{\eqn}{\end{equation}}
\newcommand{\R}{\mathbb{R}}
\newcommand{\ve}{\varepsilon}
\newcommand{\e}{\varepsilon}
\newcommand{\rd}{\mathrm{d}}
\newcommand{\ue}{u_\varepsilon}
\newcommand{\Phie}{\Phi_\varepsilon}
\newcommand{\dhr}{\mathrel{\lhook\joinrel\relbar\kern-.8ex\joinrel\lhook\joinrel\rightarrow}} 
\title[A parabolic free boundary problem for MEMS]
{A parabolic free boundary problem modeling electrostatic MEMS}
\begin{document}
\author{Joachim Escher}
\address{Leibniz Universit\"at Hannover\\ Institut f\" ur Angewandte Mathematik \\ Welfengarten 1 \\ D--30167 Hannover\\ Germany}
\email{escher@ifam.uni-hannover.de}

\author{Philippe Lauren\c{c}ot}
\address{Institut de Math\'ematiques de Toulouse, CNRS UMR~5219, Universit\'e de Toulouse \\ F--31062 Toulouse Cedex 9, France}
\email{laurenco@math.univ-toulouse.fr}

\author{Christoph Walker}
\address{Leibniz Universit\"at Hannover\\ Institut f\" ur Angewandte Mathematik \\ Welfengarten 1 \\ D--30167 Hannover\\ Germany}
\email{walker@ifam.uni-hannover.de}

\begin{abstract}
The evolution problem for a membrane based model of an electrostatically actuated microelectromechanical system (MEMS) is studied. The model describes the dynamics of the membrane displacement and the electric potential. The latter is a harmonic function in an angular domain, the deformable membrane being a part of the boundary. The former solves a heat equation with a right hand side that depends on the square of the trace of the gradient of the electric potential on the membrane. The resulting free boundary problem is shown to be well-posed locally in time. Furthermore, solutions corresponding to small voltage values exist globally in time while global existence is shown not to hold for high voltage values. It is also proven that, for small voltage values, there is an asymptotically stable steady-state solution. Finally, the small aspect ratio limit is rigorously justified.
\end{abstract}

\keywords{MEMS, free boundary problem, well-posedness, asymptotic stability, finite time singularity, small aspect ratio limit}
\subjclass[2010]{35R35, 35M33, 35Q74, 35B25, 74M05}

\maketitle

\section{Introduction}

An idealized electrostatically actuated microelectromechanical system (MEMS) consists of a rigid ground plate above which a thin and deformable elastic membrane is suspended that is held fixed along its boundary, see Figure~\ref{fig1}. Applying a voltage difference between the two components induces displacements of the membrane and thus transforms electrostatic energy into mechanical energy, a feature that has applications in the design of transistors, switches, or micro-pumps, for instance. There is, however, an upper limit for the applied voltage potential beyond which the electrostatic force cannot be balanced by the elastic response of the membrane which then touches down on the rigid plate. This phenomenon is usually referred to as ``pull-in'' instability. Estimating this threshold value is an important issue in applications as it may be a desirable feature of the device in some situations (e.g. switches, micropumps) or possibly damage the device in others. Mathematical models have been set up for that purpose, and we refer the reader e.g. to \cite{PeleskoSIAP02, BernsteinPelesko, PeleskoTriolo01} and the references therein for a more detailed account of the physical background and the modeling aspects of such devices. 

\begin{figure}
\centering\includegraphics[width=10cm]{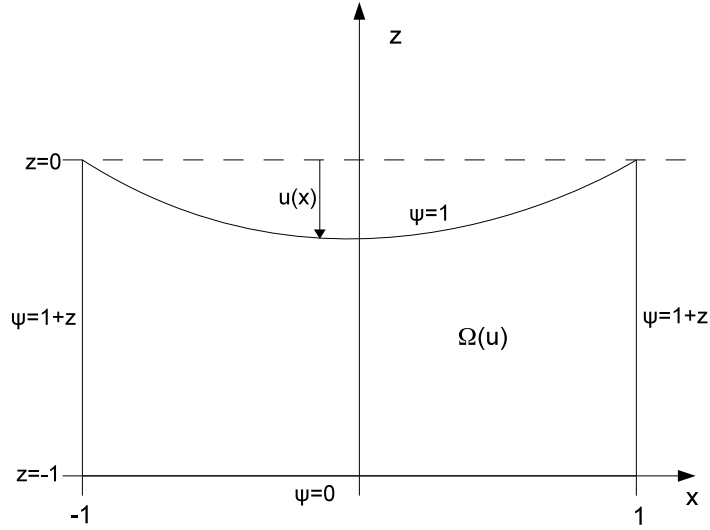}
\caption{\small Idealized electrostatic MEMS device.}\label{fig1}
\end{figure}

Denoting the displacement of the membrane and the electrostatic potential in the device by $u$ and $\psi$, respectively, we consider here the idealized situation where the applied voltage and the permittivity of the membrane are constant (normalized to one), and there is no variation in the horizontal direction orthogonal to the $x$-direction of both $\psi$ and $u$. Under appropriate scalings, the rigid ground plate is at $z=-1$, and the undeflected membrane at $z=0$ is fixed at the boundary $x=-1$ and $x=1$ of the interval $I:=(-1,1)$, see Figure~\ref{fig1}. Letting $\varepsilon$ denote the aspect ratio of the device before scaling, i.e. the ratio of the undeformed gap size to the device length, the membrane displacement $u=u(t,x)\in (-1,\infty)$ evolves according to
\begin{equation}\label{u}
\partial_t u - \partial_x^2 u = - \lambda\ \left( \varepsilon^2\ |\partial_x\psi(t,x,u)|^2 + |\partial_z\psi(t,x,u)|^2 \right)\ ,\quad x\in I\ ,\qquad t>0\ ,
\end{equation}
with clamped boundary conditions
\begin{equation}\label{bcu}
u(t,\pm 1)=0\ ,\quad t>0\ ,
\end{equation}
and initial condition
\begin{equation}\label{ic}
u(0,x)=u^0(x)\ ,\quad x\in I\ .
\end{equation}
The dimensionless electrostatic potential $\psi=\psi(t,x,z)$ satisfies Laplace's equation
\begin{equation}\label{psi}
\varepsilon^2\partial_x^2\psi + \partial_z^2\psi =0\ ,\quad (x,z)\in \Omega(u(t))\ ,\quad t>0\ ,
\end{equation}
in the region
$$
\Omega(u(t)) := \left\{ (x,z)\in I\times (-1,\infty)\ :\ -1 < z < u(t,x) \right\}\,
$$
between the rigid ground plate at $z=-1$ and the deflected membrane.
The boundary conditions for $\psi$ are then
\begin{equation}\label{bcpsi}
\psi(t,x,z)=\frac{1+z}{1+u(t,x)}\ ,\quad (x,z)\in  \partial\Omega(u(t))\ ,\quad t>0\ .
\end{equation}

Equation~\eqref{u} corresponds to the situation in which viscous forces dominate over inertial forces in the system, e.g. see \cite{FloresMercadoPelesko, BernsteinPelesko}. Also, deformations due to bending are neglected in \eqref{u}. Of particular importance in the model is the parameter $\lambda> 0$ which characterizes the relative strengths of electrostatic and mechanical forces and is proportional to the applied voltage. According to the above discussion, the pull-in instability is expected to take place for $\lambda$ large enough.

The analysis of \eqref{u}-\eqref{bcpsi} turns out to be rather complex since \eqref{psi} is a free boundary problem: indeed, the domain between the rigid ground plate and the elastic membrane changes with time. Due to this, equations \eqref{u} and \eqref{psi} are strongly coupled. However, a common assumption made in mathematical analysis hitherto is a vanishing aspect ratio $\varepsilon$ that reduces the free boundary problem to a heat equation with a right hand side involving a singularity when the membrane touches down on the ground plate. More precisely, setting $\varepsilon=0$ allows one to solve  \eqref{psi}-\eqref{bcpsi} explicitly for the potential $\psi=\psi_0$, that is,
\bqn\label{psi0}
\psi_0(t,x,z)=\frac{1+z}{1+u_0(t,x)}\ ,\quad (t,x,z)\in [0,\infty)\times I\times(-1,0)\ ,
\eqn
where the displacement $u=u_0$ now satisfies the so-called small aspect ratio model 
\begin{equation}\label{z0}
\begin{array}{rlll}
\partial_t u_0 - \partial_x^2 u_0 &\!\!\!=\!\!\!& - \displaystyle{\frac{\lambda}{(1+u_0)^2}}\,,\quad & x\in I\,, \quad t\in (0,\infty)\,, \\
u_0(t,\pm1) &\!\!\!=\!\!\!& 0\,, & t\in (0,\infty)\,, \\
u_0(0,x)&\!\!\!=\!\!\!& u^0(x)\,, & x\in I\, .
\end{array}
\end{equation}
Several mathematical results have been obtained for \eqref{z0}, including a characterization of the critical value of $\lambda$ which corresponds to the value beyond which no steady-state exists as well as a possible space dependence of the permittivity of the membrane, see, e.g.,  \cite{EspositoGhoussoubGuo, GhoussoubGuoSIMA07, LinYang, BernsteinPelesko} for the stationary problem and \cite{EspositoGhoussoubGuo, FloresMercadoPeleskoSmyth_SIAP07, GhoussoubGuoNoDEA08, GuoHuWang09, GuoJDE08, GuoJDE08_II, Hui11, BernsteinPelesko} for the evolution problem. Inertial effects are taken into account in \cite{GuoSIAMDynamic10, KavallarisLaceyNikolopoulos}.

To the best of our knowledge, the first analytical research without assumption of a small aspect ratio and thus dedicated to the original free boundary problem \eqref{u}-\eqref{bcpsi} is \cite{LaurencotWalker_ARMA}, where the existence of steady-states has been established for small voltage values $\lambda$ and a non-existence result for steady-states is obtained for large values of $\lambda$.

Here, we address the evolution problem. A rough summary of our results reads as follows: We prove the local well-posedness of \eqref{u}-\eqref{bcpsi} for all voltage values and show that the solutions exist globally in time provided the voltage value is sufficiently small. In contrast to the stationary case \cite{LaurencotWalker_ARMA} it turns out that a $W_\infty^2(I)$-setting is no longer suitable for the $u$-component of \eqref{u}-\eqref{bcpsi}. This is due to the fact that the heat semigroup does not enjoy suitable properties in $L_\infty(I)$. Instead, we are therefore lead to work in the framework of $W_q^2(I)$-spaces for $q<\infty$, which generates additional difficulties as now $\partial_x^2u$ may become unbounded.
For small voltage values we further prove that there is a locally asymptotically stable steady-state. For high voltage values we prove that global existence of solutions does not hold. In addition, we analyze the behavior of the solutions as the small aspect ratio $\varepsilon\to 0$, showing convergence towards \eqref{z0} as expected from a formal analysis. \\

To state precisely our results we introduce for $q\in [2,\infty)$ and $\kappa\in (0,1)$ the set
$$
S_q(\kappa):=\left\{u\in W_{q,D}^2(I)\,;\, \|u\|_{W_{q,D}^2(I)}< 1/\kappa \;\;\text{ and }\;\; -1+\kappa< u(x) \text{ for } x\in I \right\}\ ,
$$
where $ W_{q,D}^{2\alpha}(I):=\{u\in  W_{q}^{2\alpha}(I)\,;\, u(\pm 1)=0\}$ for $2\alpha \in (1/q,2]$ and $ W_{q,D}^{2\alpha}(I):= W_{q}^{2\alpha}(I)$ for $0\le 2\alpha < 1/q$. The local existence result now reads:

\begin{thm}[{\bf Local Well-Posedness}]\label{A}
Let $q\in (2,\infty)$, $\varepsilon>0$, and consider an initial value $u^0\in  W_{q,D}^2(I)$ such that $u^0(x)>-1$ for $x\in I$. Then, the following are true:

\begin{itemize}

\item[(i)] For each voltage value $\lambda>0$, there is a unique maximal solution $(u,\psi)$ to \eqref{u}-\eqref{bcpsi} on the maximal interval of existence $[0,T_m^\varepsilon)$ in the sense that
$$u\in C^1\big([0,T_m^\varepsilon),L_q(I)\big)\cap C\big([0,T_m^\varepsilon), W_{q,D}^2(I)\big)
$$
satisfies \eqref{u}-\eqref{ic} together with
$$
u(t,x)>-1\ ,\quad (t,x)\in [0,T_m^\varepsilon)\times I\ , 
$$ 
and $\psi(t)\in W_q^2\big(\Omega(u(t))\big)$ solves \eqref{psi}-\eqref{bcpsi} on $\Omega(u(t))$ for each $t\in [0,T_m^\varepsilon)$. 

\item[(ii)] If for each $\tau>0$ there is $\kappa(\tau)\in (0,1)$ such that $u(t)\in S_q(\kappa(\tau))$ for $t\in [0,T_m^\varepsilon)\cap [0,\tau]$, then the solution exists globally, that is, $T_m^\varepsilon=\infty$.

\item[(iii)] If $u^0(x)\le 0$ for $x\in I$, then $u(t,x)\le 0$ for $(t,x)\in [0,T_m^\varepsilon)\times I$. If $u^0=u^0(x)$ is even with respect to $x\in I$, then, {for all $t\in [0,T_m^\varepsilon)$}, $u=u(t,x)$ and $\psi=\psi(t,x,z)$ are even with respect to $x\in I$ as well.

\end{itemize}
\end{thm}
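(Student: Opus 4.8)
The plan is to recast the free boundary problem as an abstract quasilinear Cauchy problem for $u$ alone, by first solving the elliptic problem \eqref{psi}--\eqref{bcpsi} for $\psi$ in terms of $u$. Given $v\in W_{q,D}^2(I)$ with $v>-1$, one transforms the variable domain $\Omega(v)$ onto the fixed rectangle $I\times(-1,0)$ via the map $(x,z)\mapsto (x,(1+z)/(1+v(x)))$; the transformed potential $\Phi=\psi\circ(\text{transformation})^{-1}$ then solves a linear elliptic equation on the fixed domain whose coefficients depend (nonlinearly, but smoothly) on $v$ and its first two derivatives, with inhomogeneous Dirichlet data $\Phi(x,1)=1$, $\Phi(x,0)=0$. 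Elliptic regularity on the fixed rectangle gives $\Phi\in W_q^2$ with bounds and, crucially, local Lipschitz dependence of $\Phi$ — hence of the trace of $\nabla\psi$ on the membrane graph — on $v$ in the $W_{q,D}^2(I)$-topology, uniformly on sets where $v$ stays bounded and bounded away from $-1$. Denote by $g_\varepsilon(v)$ the resulting nonlinearity, i.e. the right-hand side $-\lambda(\varepsilon^2|\partial_x\psi|^2+|\partial_z\psi|^2)(x,v(x))$ in \eqref{u}; the above shows $g_\varepsilon$ maps $S_q(\kappa)$ Lipschitz-continuously into (a suitable subspace of) $L_q(I)$.

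Next I would reformulate \eqref{u}--\eqref{ic} as $\dot u + A u = g_\varepsilon(u)$, $u(0)=u^0$, where $A=-\partial_x^2$ with domain $W_{q,D}^2(I)$ in $L_q(I)$. Since $A$ generates an analytic semigroup on $L_q(I)$ and the admissible initial data $u^0\in W_{q,D}^2(I)$ lies in the domain, one applies the standard theory of abstract semilinear parabolic equations with a locally Lipschitz nonlinearity (e.g. the contraction-mapping fixed point in $C([0,T],W_{q,D}^2(I))$ built on the variation-of-constants formula, exploiting the smoothing $e^{-tA}:L_q\to W_{q,D}^2$ with integrable singularity since $q<\infty$) to obtain a unique local solution $u\in C^1([0,T],L_q(I))\cap C([0,T],W_{q,D}^2(I))$, together with a maximal existence time $T_m^\varepsilon$ and the continuation principle: if the solution stays in a compact subset of the open set $\{v\in W_{q,D}^2: v>-1\}$ on bounded time intervals then it extends. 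Recovering $\psi(t)\in W_q^2(\Omega(u(t)))$ from $\Phi$ by the inverse transformation finishes (i), and the continuation principle — together with the fact that $S_q(\kappa)$-membership forces the solution into such a compact-type set where $g_\varepsilon$ is controlled, plus a bound on $\dot u$ in $L_q$ — gives (ii).

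For (iii), the sign-preservation $u\le 0$ follows from the parabolic comparison/maximum principle: the right-hand side $g_\varepsilon(u)$ is pointwise $\le 0$ (being $-\lambda$ times a sum of squares), so $u$ is a subsolution of the homogeneous heat equation with zero boundary and nonpositive initial data. The symmetry statement follows from uniqueness: if $u^0$ is even in $x$, then $x\mapsto u(t,-x)$ solves the same problem — one checks that the domain $\Omega(u(t,-\cdot))$ is the reflection of $\Omega(u(t,\cdot))$ and that the elliptic problem for $\psi$ is invariant under $x\mapsto -x$, so $\psi(t,-x,z)$ is the potential associated with $u(t,-\cdot)$ and the reflected nonlinearity $g_\varepsilon$ is equivariant — whence uniqueness forces $u(t,-x)=u(t,x)$ and then $\psi(t,-x,z)=\psi(t,x,z)$.

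The main obstacle is the elliptic step: establishing that $v\mapsto\Phi$, and especially $v\mapsto (\text{trace of }\nabla\psi\text{ on the graph of }v)$, is locally Lipschitz from $W_{q,D}^2(I)$ into $L_q(I)$. After the domain transformation the elliptic operator has coefficients involving $\partial_x v$ and $\partial_x^2 v$, which for $q<\infty$ need not be bounded, so one cannot simply invoke $L_\infty$-coefficient elliptic theory; one must exploit that $W_{q,D}^2(I)\hookrightarrow C^1(\bar I)$ for $q>2$ so that $\partial_x v$ is continuous while $\partial_x^2 v\in L_q$, and carefully track how the $W_q^2$-elliptic estimate on the rectangle depends on these coefficients — this is exactly the place where the restriction $q>2$ and the abandonment of the $W_\infty^2$-framework of \cite{LaurencotWalker_ARMA} become essential, and where the trace $\partial_z\Phi(x,0)$ must be shown to inherit $W_q^{1-1/q}\hookrightarrow C(\bar I)$-type regularity uniformly in $v$.
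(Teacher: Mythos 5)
Your overall architecture --- transform to the fixed rectangle, recast the problem as a semilinear Cauchy problem $\dot u+Au=-\lambda g_\ve(u)$, contraction mapping, positivity of the heat semigroup for the sign statement, uniqueness plus reflection equivariance for the symmetry --- is exactly the paper's. But the central analytic step, as you describe it, does not close. You propose to show that $g_\ve$ is locally Lipschitz from $S_q(\kappa)\subset W_{q,D}^2(I)$ into $L_q(I)$ and then to run the contraction in $C([0,T],W_{q,D}^2(I))$ ``exploiting the smoothing $e^{-tA}:L_q\to W_{q,D}^2$ with integrable singularity since $q<\infty$.'' That operator norm behaves like $t^{-1}$ as $t\to 0$ for \emph{every} $q$; the singularity is never integrable, so the Duhamel term cannot be bounded in $W_{q,D}^2(I)$ this way. (This is the borderline case $F:D(A)\to X$ of semilinear parabolic theory, where the naive variation-of-constants contraction fails and one would need maximal regularity or Lunardi's fully nonlinear theory instead.) The paper's resolution is to prove \emph{more} about $g_\ve$: Proposition~2.1 shows it is Lipschitz from $S_q(\kappa)$ into $W_{2,D}^{2\sigma}(I)$ for $2\sigma<1/2$, a space of positive smoothness, and Lemma~2.3(ii) then gives $\|e^{-tA}\|_{\mathcal{L}(W_{2,D}^{2\sigma}(I),W_{q,D}^{2}(I))}\le c\, e^{-\omega t}\, t^{\sigma-1-\frac12(\frac12-\frac1q)}$, whose exponent exceeds $-1$ precisely when $2\sigma>\frac12-\frac1q$ --- which is achievable because $q>2$. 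This is where the hypothesis $q>2$ does its real work; your outline never identifies where the integrability of the singularity comes from.

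Relatedly, your elliptic step aims at $W_q^2(\Omega)$ regularity of the transformed potential and a trace $\partial_\eta\Phi(\cdot,1)$ in $W_q^{1-1/q}(I)$ uniformly in $v$. That is both harder than needed (the coefficients of $\mathcal{L}_v$ contain $\partial_x^2v$, which is only in $L_q$, and the rectangle has corners, so $L_q$-elliptic theory is delicate) and not what the paper does: it works entirely within $L_2$-based elliptic theory, obtaining $\Phi_v\in W_2^2(\Omega)$ with a bound uniform over $\overline S_q(\kappa)$ via a compactness--contradiction argument, takes the trace in $W_2^{1/2}(I)$, and uses pointwise multiplication $W_2^{1/2}\cdot W_2^{1/2}\hookrightarrow W_2^{2\sigma_1}(I)$, $2\sigma_1<1/2$, to place $g_\ve$ in $W_{2,D}^{2\sigma}(I)$. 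The remaining parts of your proposal (continuation criterion from the $\kappa$-uniform contraction time, $u\le 0$ from $g_\ve\ge 0$ and positivity of $e^{-tA}$, evenness from uniqueness together with $\phi_{\tilde v}(x,\eta)=\phi_v(-x,\eta)$) do match the paper.
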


The proof of Theorem~\ref{A} is performed as follows. We first transform the Laplace equation \eqref{psi} to a fixed rectangle which results in an elliptic boundary value problem with non-constant coefficients depending on $u$ and its derivatives up to order 2. Solving this elliptic equation (for a given $u$) allows us to interpret the full free boundary problem as a nonlocal semilinear heat equation for $u$ (see \eqref{33}). We then employ
a fixed point argument to solve this evolution problem. Since the nonlinearity in the $u$-equation depends on the trace of the gradient of the potential, two ingredients are essential: precise estimates based on the regularizing effects of the heat semigroup and elaborated investigations of the properties  of the solution to the transformed elliptic problem. The proof is given in Section~\ref{Sec2}. \\

We now address global existence issues. From a physical viewpoint a ``pull-in'' instability occurs for high voltage values. Accordingly, for large values of $\lambda$ solutions cease to exist globally while solutions corresponding to small $\lambda$ values exist globally in time. More precisely, we have:

{
\begin{thm}[{\bf Global Existence}]\label{TD}
Let $q\in (2,\infty)$, $\varepsilon>0$, $\lambda>0$, and let $u^0\in  W_{q,D}^2(I)$ satisfy $-1<u^0(x)\le 0$ for $x\in I$. Let $(u,\psi)$ be the corresponding solution to \eqref{u}-\eqref{bcpsi} on the maximal interval of existence $[0,T_m^\ve)$.
\begin{itemize}
\item[(i)]
Given $\kappa\in (0,1)$ there exists $\lambda_*:=\lambda_*(\kappa,\varepsilon)>0$ and $\kappa_0:=\kappa_0(\kappa,\varepsilon)>0$ such that $T_m^\varepsilon=\infty$ and $u(t)\in S_q(\kappa_0)$ for $t\ge 0$ provided that $u^0\in S_q(\kappa)$ and $\lambda\in (0, \lambda_*)$.

\item[(ii)] 
There is $\lambda^*(\varepsilon)>0$ depending only on $\varepsilon$ such that  $T_m^\varepsilon<\infty$ provided $\lambda>\lambda^*(\ve)$.
\end{itemize}
\end{thm}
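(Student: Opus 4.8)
The plan is to establish (i) by a global continuation argument built on the criterion of Theorem~\ref{A}(ii), and (ii) by a finite‑time blow‑up argument for the first Fourier coefficient of $u$, i.e.\ the eigenfunction method adapted to the present free boundary setting.

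\emph{Proof of (i).} Fix $\kappa\in(0,1)$ and $u^0\in S_q(\kappa)$; by Theorem~\ref{A}(iii) we have $u(t)\le0$ on $[0,T_m^\varepsilon)$ since $u^0\le0$. Write \eqref{u}--\eqref{bcu} as $\partial_t u+Au=-\lambda\,g_\varepsilon(u)$, with $A:=-\partial_x^2$ under homogeneous Dirichlet conditions and $g_\varepsilon(u):=\varepsilon^2|\partial_x\psi(\cdot,u)|^2+|\partial_z\psi(\cdot,u)|^2$. The analysis of the transformed elliptic problem in Section~\ref{Sec2} provides, for each $\kappa_1\in(0,1)$, a fixed exponent $\sigma\in(0,1/(2q))$ (so that $W_q^{2\sigma}(I)=D(A^\sigma)$) and a constant $\Lambda(\kappa_1,\varepsilon)$ with $\|g_\varepsilon(v)\|_{L_\infty(I)}+\|g_\varepsilon(v)\|_{W_q^{2\sigma}(I)}\le\Lambda(\kappa_1,\varepsilon)$ for $v\in\overline{S_q(\kappa_1)}$. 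Choosing $\kappa_1\in(0,\kappa/2]$ so small that $1/\kappa_1>M_q/\kappa$, where $M_q:=\sup_{t\ge0}\|e^{-tA}\|_{\mathcal L(W_{q,D}^2(I))}<\infty$, the datum $u^0$ lies well inside $S_q(\kappa_1)$, so that $T:=\sup\{t\in[0,T_m^\varepsilon)\,;\,u(s)\in S_q(\kappa_1)\text{ for }s\in[0,t]\}>0$. On $[0,T)$ I would combine two a priori estimates. First, since $\partial_t u-\partial_x^2 u=-\lambda\,g_\varepsilon(u)\ge-\lambda\,\Lambda(\kappa_1,\varepsilon)$, the parabolic comparison principle and positivity of the Dirichlet heat semigroup give $u(t,x)\ge-1+\kappa-\lambda\,\Lambda(\kappa_1,\varepsilon)\,C_0$ with $C_0:=\int_0^\infty\|e^{s\partial_x^2}\mathbf{1}\|_{L_\infty(I)}\,ds<\infty$. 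Second, since $A$ has positive spectral bound $\omega>0$, the variation‑of‑constants formula together with the smoothing estimate $\|e^{-tA}\|_{\mathcal L(W_q^{2\sigma}(I),W_{q,D}^2(I))}\le C\,t^{-(1-\sigma)}e^{-\omega t}$, whose right‑hand side is integrable over $(0,\infty)$ because $1-\sigma<1$, yield
\[
\|u(t)\|_{W_q^2(I)}\le M_q\,\|u^0\|_{W_q^2(I)}+\lambda\,\widetilde C(\kappa_1,\varepsilon)\le \frac{M_q}{\kappa}+\lambda\,\widetilde C(\kappa_1,\varepsilon)\,.
\]
Both bounds depend continuously on $\lambda$ and, for $\lambda=0$, lie strictly inside the two constraints defining $S_q(\kappa_1)$; hence there is $\lambda_*=\lambda_*(\kappa,\varepsilon)>0$ such that for $\lambda\in(0,\lambda_*)$ the orbit $u(t)$ stays bounded away from the boundary of $S_q(\kappa_1)$ throughout $[0,T)$, which by continuity of $t\mapsto u(t)$ in $W_{q,D}^2(I)$ forces $T=T_m^\varepsilon$. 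Theorem~\ref{A}(ii) then gives $T_m^\varepsilon=\infty$, and the assertion holds with $\kappa_0:=\kappa_1$. I expect the main difficulty here to be propagating the \emph{full} $W_q^2$‑bound — rather than merely the distance of $u$ to $-1$ — through the Duhamel formula; this is precisely where the small regularity gain $g_\varepsilon(u)\in W_q^{2\sigma}$ inherited from the elliptic problem is needed to compensate for the possibly unbounded $\partial_x^2 u$ mentioned in the introduction.

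\emph{Proof of (ii).} Let $\varphi_1>0$ be the first Dirichlet eigenfunction of $-\partial_x^2$ on $I$, with eigenvalue $\mu_1=\pi^2/4$ and normalisation $\int_I\varphi_1\,dx=1$, and set $E(t):=\int_I u(t,x)\,\varphi_1(x)\,dx$. Since $-1<u(t,x)\le0$ on $[0,T_m^\varepsilon)$ (Theorem~\ref{A}(iii) and $u^0\le0$) we have $E(t)\in(-1,0]$, and multiplying \eqref{u} by $\varphi_1$, integrating over $I$ and using \eqref{bcu} twice gives $E'(t)=-\mu_1\,E(t)-\lambda\int_I g_\varepsilon(u(t))\,\varphi_1\,dx$. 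The key ingredient is a uniform positive lower bound $\int_I g_\varepsilon(v)\,\varphi_1\,dx\ge c(\varepsilon)>0$, valid for all admissible membrane profiles $v$. To obtain it one uses that $\psi\equiv1$ on the upper part $\{z=v(x)\}$ of $\partial\Omega(v)$, so that the tangential derivative of $\psi$ vanishes there and $\partial_x\psi(x,v(x))=-(\partial_x v)(x)\,\partial_z\psi(x,v(x))$, whence
\[
g_\varepsilon(v)(x)=\big(1+\varepsilon^2(\partial_x v)(x)^2\big)\,|\partial_z\psi(x,v(x))|^2\ \ge\ |\partial_z\psi(x,v(x))|^2\,;
\]
combined with the slice identity $\int_{-1}^{v(x)}\partial_z\psi(x,z)\,dz=\psi(x,v(x))-\psi(x,-1)=1$, which by Cauchy--Schwarz and $v\le0$ forces $\int_{-1}^{v(x)}|\partial_z\psi(x,z)|^2\,dz\ge(1+v(x))^{-1}\ge1$, and with the regularity and maximum‑principle properties of the electrostatic potential (in the spirit of the non‑existence result for steady states of \cite{LaurencotWalker_ARMA}), one arrives at such a constant $c(\varepsilon)$ depending only on $\varepsilon$. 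Granting this and using $-\mu_1 E(t)<\mu_1$, we obtain $E'(t)\le\mu_1-\lambda\,c(\varepsilon)$ on $[0,T_m^\varepsilon)$; hence if $\lambda>\lambda^*(\varepsilon):=\mu_1/c(\varepsilon)=\pi^2/(4c(\varepsilon))$ then $E'(t)\le-\delta$ with $\delta:=\lambda\,c(\varepsilon)-\mu_1>0$, so $E(t)\le E(0)-\delta t$. Since $E(t)>-1$ as long as the solution exists, this yields $T_m^\varepsilon\le(E(0)+1)/\delta<\infty$. The main obstacle here is the uniform lower bound on the $\varphi_1$‑weighted electrostatic force — the quantitative fact that it cannot be made arbitrarily small — whose proof rests on the fine analysis of the transformed elliptic problem.
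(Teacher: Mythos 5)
Your argument for part (i) is essentially the route the paper takes: bound the Duhamel integral $\int_0^t e^{-(t-s)A}g_\ve(u(s))\,\rd s$ uniformly in $t$ using the exponential decay of the semigroup and the integrability of the singularity coming from the $W^{2\sigma}$-bound on $g_\ve$, then choose $\lambda$ small so that the orbit never leaves a fixed $\overline{S}_q(\kappa_0)$. One step is not justified as written: you invoke a bound $\|g_\ve(v)\|_{L_\infty(I)}\le\Lambda$ to run a pointwise comparison principle, but Proposition~\ref{L1} only gives $g_\ve(v)$ bounded in $W_{2}^{2\sigma}(I)$ with $2\sigma<1/2$, which does not embed into $L_\infty(I)$ (the trace $\partial_\eta\phi_v(\cdot,1)$ is merely in $W_2^{1/2}(I)$). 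This is repairable: the touchdown bound should be extracted, as in \eqref{13}, from the Duhamel formula itself via positivity of $e^{-tA}$ and the embedding $W_{q,D}^2(I)\hookrightarrow L_\infty(I)$ applied \emph{after} the smoothing, not from an $L_\infty$ bound on the nonlinearity.

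Part (ii) contains a genuine gap. The entire argument rests on the claimed uniform lower bound $\int_I g_\ve(v)\,\varphi_1\,\rd x\ge c(\ve)>0$ over all admissible profiles $v$, and your sketch does not establish it. The slice identity plus Cauchy--Schwarz gives $\int_{-1}^{v(x)}|\partial_z\psi(x,z)|^2\,\rd z\ge (1+v(x))^{-1}$, which is a lower bound on the \emph{bulk} Dirichlet energy of the vertical slice; it says nothing about the \emph{trace} $|\partial_z\psi(x,v(x))|^2$ on the membrane, which is what enters $g_\ve$. Passing from the bulk quantity to the boundary flux is exactly the hard step. The paper does it by testing \eqref{psi} with $\zeta_1\psi^p$ (Lemma~\ref{le.n2}), which produces $\int\zeta_1(1+\ve^2|\partial_x u|^2)\,\partial_z\psi(x,u(x))\,\rd x$ to the \emph{first} power; converting this to the square via Young's inequality unavoidably generates a term $-\frac{\ve^2}{4\beta^2}\int\zeta_1|\partial_x u|^2\,\rd x$ of the wrong sign, which can be arbitrarily large since no gradient bound is available along the evolution. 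Consequently no $v$-independent constant $c(\ve)$ comes out of this machinery, and the paper explicitly notes that the plain functional $E_0=\int u\,\zeta_1$ "does not seem to work" in the evolutionary setting (the convexity used in the stationary nonexistence proof of \cite{LaurencotWalker_ARMA}, which you appeal to, is not known here). The missing idea is the modified functional $E_\alpha=\int\zeta_1(u+\tfrac{\alpha}{2}u^2)\,\rd x$: differentiating the quadratic term produces $+\alpha\int\zeta_1|\partial_x u|^2$ on the good side, and choosing $\alpha=\lambda\ve^2/(4\beta^2+\lambda\ve^2)$ absorbs the bad gradient term exactly, after which Jensen's inequality closes the differential inequality. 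Without this (or an equivalent device) your differential inequality $E'\le\mu_1-\lambda c(\ve)$ is not available.
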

}

Note that part (i) of Theorem~\ref{TD} provides uniform estimates on $u$ in the $W_{q}^2(I)$-norm and ensures that $u$ never touches down on -1, not even in infinite time. Its proof is contained in Section~\ref{Sec2} and it is a consequence of the above mentioned fixed point argument. The second part of Theorem~\ref{TD} is proven in Section~\ref{Sec3a} by constructing a suitable strict Lyapunov functional. Let us mention that similar results as stated in Theorem~\ref{TD} are known to hold for the small aspect ratio model \eqref{z0}, see \cite{FloresMercadoPeleskoSmyth_SIAP07, GhoussoubGuoNoDEA08}. However, the nonlocal features of \eqref{u}-\eqref{bcpsi} prevents one from using similar techniques and we thus have to develop an alternative approach. 
Also, there is a qualitative difference of the interpretation of the finiteness of $T_m^\ve$  in Theorem~\ref{TD}(ii). Indeed, according to Theorem~\ref{A}, $T_m^\ve<\infty$ implies that the $W_q^2(I)$-norm of $u$ blows up  or $u$ touches down on $-1$ in finite time. This is in clear contrast to the small aspect ratio model \eqref{z0} for which touchdown is the only mechanism for a finite time singularity. The difference stems from the fact that in \eqref{z0} the nonlinearity is of zero order while for the free boundary problem \eqref{u}-\eqref{bcpsi} the nonlocal nonlinearity is rather of order ``3/2'' in the $L_q$-sense (see Proposition~\ref{L1}). Nevertheless, we strongly believe that finite time touchdown occurs in the present model as well when $T_m^\ve$ is finite.

\medskip

{
We next turn to stability of steady-states. This is a delicate issue since it is expected in analogy to what is known for the small aspect ratio model \cite{EspositoGhoussoubGuo, BernsteinPelesko} that there are two steady states for small $\lambda$ values.  In \cite{LaurencotWalker_ARMA} it was shown that there is at least one steady-state to  \eqref{u}-\eqref{bcpsi} for small values of $\lambda$ (and none for large $\lambda$). We shall refine this result here and prove that, provided $\lambda$ is small, this steady-state is unique with a first component in the set $S_q(\kappa)$ and locally asymptotically stable. 

\begin{thm}[{\bf Asymptotic Stability}]\label{TStable}
Let $q\in (2,\infty)$, $\ve>0$, and $\kappa\in (0,1)$. 
\begin{itemize}
\item[(i)] There are $\delta=\delta(\kappa)>0$ and an analytic function $[\lambda\mapsto U_\lambda]:[0,\delta)\rightarrow W_{q,D}^2(I)$ such that $(U_\lambda,\Psi_\lambda)$ is for each $\lambda\in (0,\delta)$ the unique steady-state to
\eqref{u}-\eqref{bcpsi} with $U_\lambda\in S_q(\kappa)$ and $\Psi_\lambda\in W_2^2(\Omega(U_\lambda))$. Moreover, $U_\lambda$ is negative, convex, and even for $\lambda\in (0,\delta)$ and $U_0=0$.

\item[(ii)] Let $\lambda\in (0,\delta)$. There are $\omega_0,r,R>0$ such that for each initial value $u^0\in W_{q,D}^2(I)$ with $\|u^0-U_\lambda\|_{W_{q,D}^2} <r$, the solution $(u,\psi)$ to \eqref{u}-\eqref{bcpsi} exists globally in time and
\bqn\label{est}
\|u(t)-U_\lambda\|_{W_{q,D}^2(I)}+\|\partial_t u(t)\|_{L_{q}(I)} \le R e^{-\omega_0 t} \|u^0-U_\lambda\|_{W_{q,D}^2(I)}\ ,\quad t\ge 0\ .
\eqn
\end{itemize}
\end{thm}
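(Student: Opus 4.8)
\emph{Strategy.} The plan is to use the reformulation of \eqref{u}-\eqref{bcpsi} as the nonlocal semilinear heat equation~\eqref{33}, written abstractly as
\begin{equation*}
\dot u + \mathbb{A}_0\, u = G(u)\ ,\qquad u(0)=u^0\ ,\qquad G(u):=-\lambda\, g_\varepsilon(u)\ ,
\end{equation*}
where $\mathbb{A}_0:=-\partial_x^2$ with domain $W_{q,D}^2(I)$ is an isomorphism onto $L_q(I)$ generating an exponentially decaying analytic semigroup with $\mathrm{spec}(\mathbb{A}_0)=\{(k\pi/2)^2:k\ge 1\}\subset[\pi^2/4,\infty)$, and $g_\varepsilon(u)$ is the nonlocal nonlinearity built from the trace on the membrane of the gradient of the solution $\psi_u$ of the transformed elliptic problem. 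From Section~\ref{Sec2} I will use that on $\mathcal{O}:=\{u\in W_{q,D}^2(I): u>-1 \text{ in } I\}$ the map $G$ is real-analytic (the coefficients of the transformed equation depend polynomially on $u,\partial_x u,\partial_x^2 u$, and the associated linear solution operator depends analytically on them), that for each $\kappa\in(0,1)$ both $G$ and $DG$ are bounded and Lipschitz continuous on $S_q(\kappa)$ with constants depending only on $(\kappa,\varepsilon)$, and that, by Proposition~\ref{L1}, $g_\varepsilon$ and $Dg_\varepsilon(v)$ for $v\in S_q(\kappa)$ act continuously from $W_{q,D}^{2\sigma}(I)$ into $L_q(I)$ for some $\sigma=\sigma(q)\in(1/2,1)$; in particular $G$ is of lower order relative to $\mathbb{A}_0$.

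\emph{Part (i).} Steady-states are exactly the zeros of the real-analytic map $F:\R\times\mathcal{O}\to L_q(I)$, $F(\lambda,u):=\mathbb{A}_0 u-G(u)=\mathbb{A}_0 u+\lambda g_\varepsilon(u)$, and $F(0,0)=0$ with $D_uF(0,0)=\mathbb{A}_0$ an isomorphism from $W_{q,D}^2(I)$ onto $L_q(I)$ (the term $\lambda g_\varepsilon$ being linear in $\lambda$, it does not contribute to $D_uF$ at $\lambda=0$). The analytic implicit function theorem then yields an analytic branch $\lambda\mapsto U_\lambda$ on some $[0,\delta_0)$ with $F(\lambda,U_\lambda)=0$ and $U_0=0$; since $0\in S_q(\kappa)$ and $U_\lambda\to0$ in $W_{q,D}^2(I)$, after shrinking $\delta_0$ to some $\delta=\delta(\kappa)>0$ we get $U_\lambda\in S_q(\kappa)$, and $\Psi_\lambda\in W_q^2(\Omega(U_\lambda))\hookrightarrow W_2^2(\Omega(U_\lambda))$ by the elliptic regularity of Section~\ref{Sec2}. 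For uniqueness, any steady-state $u\in S_q(\kappa)$ is a fixed point of $T_\lambda:=-\lambda\,\mathbb{A}_0^{-1}g_\varepsilon$; using $g_\varepsilon\ge0$ and the bounds and Lipschitz estimate for $g_\varepsilon$ on the closed set $\mathcal{S}_\kappa:=\{v\in W_{q,D}^2(I):\|v\|_{W_{q,D}^2(I)}\le1/\kappa,\ v\ge-1+\kappa\}$, one checks (decreasing $\delta$ once more) that $T_\lambda$ maps $\mathcal{S}_\kappa$ into $S_q(\kappa)$ and is a contraction on $\mathcal{S}_\kappa$, so $U_\lambda$ is its unique fixed point and thus the only steady-state with $u$-component in $S_q(\kappa)$. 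Uniqueness and the invariance of the problem under $x\mapsto-x$ force $U_\lambda$ to be even; $\partial_x^2 U_\lambda=\lambda g_\varepsilon(U_\lambda)\ge0$ gives convexity; and since $g_\varepsilon(0)\equiv1$ (as $\psi_0(x,z)=1+z$) and $g_\varepsilon$ is continuous, $g_\varepsilon(U_\lambda)>0$ on $\bar I$ for $\delta$ small, whence $U_\lambda=-\lambda\,\mathbb{A}_0^{-1}g_\varepsilon(U_\lambda)<0$ in $I$ for $\lambda\in(0,\delta)$ by positivity of the Dirichlet Green's function on $I$.

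\emph{Part (ii).} Fix $\lambda\in(0,\delta)$, set $v:=u-U_\lambda$, and rewrite the equation around $U_\lambda$ as
\begin{equation*}
\dot v + \mathbb{A}_\lambda\, v = \mathbb{F}_\lambda(v)\ ,\qquad v(0)=u^0-U_\lambda\ ,
\end{equation*}
with $\mathbb{A}_\lambda:=\mathbb{A}_0-DG(U_\lambda)=-\partial_x^2+\lambda Dg_\varepsilon(U_\lambda)$ and $\mathbb{F}_\lambda(v):=G(U_\lambda+v)-G(U_\lambda)-DG(U_\lambda)v$, so that $\mathbb{F}_\lambda(0)=0$, $D\mathbb{F}_\lambda(0)=0$, and $\mathbb{F}_\lambda$ is locally Lipschitz from a neighbourhood of $0$ in $W_{q,D}^2(I)$ into $L_q(I)$. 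Since $Dg_\varepsilon(U_\lambda)\in\mathcal{L}(W_{q,D}^{2\sigma}(I),L_q(I))$ with $\sigma<1$, it is an $\mathbb{A}_0$-bounded perturbation of arbitrarily small relative bound, so $\mathbb{A}_\lambda$ still generates an analytic semigroup on $L_q(I)$ with domain $W_{q,D}^2(I)$ and has compact resolvent, hence discrete spectrum. At $\lambda=0$ one has $\mathbb{A}_0=-\partial_x^2$ with $\mathrm{spec}(\mathbb{A}_0)\subset[\pi^2/4,\infty)$, and since $\lambda\mapsto U_\lambda$ is analytic and $v\mapsto Dg_\varepsilon(v)$ continuous, $\mathbb{A}_\lambda\to\mathbb{A}_0$ in $\mathcal{L}(W_{q,D}^2(I),L_q(I))$ as $\lambda\to0$; by stability of the spectral gap there are $\omega_0>0$ and a possibly smaller $\delta>0$ with $\mathrm{Re}\,\mu\ge2\omega_0$ for all $\mu\in\mathrm{spec}(\mathbb{A}_\lambda)$ and $\lambda\in(0,\delta)$. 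Thus $e^{-t\mathbb{A}_\lambda}$ decays like $e^{-2\omega_0 t}$, and the principle of linearized stability for semilinear parabolic equations yields $r,R>0$ such that $\|u^0-U_\lambda\|_{W_{q,D}^2(I)}<r$ forces $v$ to exist globally with $\|v(t)\|_{W_{q,D}^2(I)}\le R e^{-\omega_0 t}\|v(0)\|_{W_{q,D}^2(I)}$; since $\partial_t u=\partial_t v=-\mathbb{A}_\lambda v+\mathbb{F}_\lambda(v)$ and $\|\mathbb{A}_\lambda v\|_{L_q}+\|\mathbb{F}_\lambda(v)\|_{L_q}\le C\|v\|_{W_{q,D}^2(I)}$ for small $\|v\|_{W_{q,D}^2(I)}$, this gives \eqref{est} after enlarging $R$, and for $r$ small $u(t)$ stays in some $S_q(\kappa')$, so global existence also follows from Theorem~\ref{A}(ii) while uniqueness of $(u,\psi)$ is Theorem~\ref{A}(i).

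\emph{Main obstacle.} The crux is the input from Section~\ref{Sec2} that the nonlocal map $u\mapsto g_\varepsilon(u)$ is real-analytic and, with its Fréchet derivative, of order strictly below $2$ on $S_q(\kappa)$: this requires differentiating the solution operator of the transformed elliptic boundary value problem with respect to $u$ and its first two derivatives and showing the resulting linear operator gains enough regularity to be $\mathbb{A}_0$-subordinate, which rests entirely on the elliptic estimates and smooth-dependence results of Section~\ref{Sec2}. Granting this, the spectral-perturbation step (continuity of $\mathbb{A}_\lambda$ in $\lambda$ together with the gap of $-\partial_x^2$) and the reduction of \eqref{est} to the abstract linearized-stability principle are routine, as is the contraction argument of part (i) once the bounds for $g_\varepsilon$ on $S_q(\kappa)$ are available.
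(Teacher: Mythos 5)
Your overall architecture coincides with the paper's: part (i) by the analytic Implicit Function Theorem applied to a zero problem for the steady states (the paper uses $F(\lambda,v)=v-\lambda A^{-1}g_\ve(v)$ with values in $W_{q,D}^2(I)$, you use the equivalent $A u+\lambda g_\ve(u)$ with values in $L_q(I)$), and part (ii) by the Principle of Linearized Stability for $-A-\lambda Dg_\ve(U_\lambda)$, with the spectral gap obtained by perturbing off $-\partial_x^2$ for small $\lambda$. Your extra contraction argument on $\overline{S}_q(\kappa)$ for the global-in-$S_q(\kappa)$ uniqueness of the steady state is a sensible addition (the paper leaves this point terse, and the IFT alone only gives uniqueness near $U_0=0$), and your derivation of evenness, convexity and negativity matches the paper's (which invokes the symmetry of Proposition~\ref{L1} and the reflection/uniqueness argument).

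One claim in your write-up is wrong, although fortunately not load-bearing. You assert that $g_\ve$ and $Dg_\ve(v)$ act continuously from $W_{q,D}^{2\sigma}(I)$ into $L_q(I)$ for some $\sigma<1$, i.e.\ that the nonlinearity is of order strictly below $2$ and hence that $Dg_\ve(U_\lambda)$ is $A$-bounded with arbitrarily small relative bound; you even identify this as the crux in your closing paragraph. This is not what Proposition~\ref{L1} provides and it is almost certainly false: the coefficients of $\mathcal{L}_v$ and the source term $f_v$ involve $\partial_x^2 v$, so $v\mapsto \phi_v$ (and hence $g_\ve$) is only defined on subsets of $W_{q,D}^2(I)$; the gain of regularity in Proposition~\ref{L1} is on the \emph{target} side ($g_\ve$ maps into $W_{2,D}^{2\sigma}(I)$ with $2\sigma<1/2$), not on the domain side. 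In particular the relative-boundedness route to generation and to the spectral gap is not available. What saves your argument — and is exactly what the paper does — is the second mechanism you also invoke: $\lambda\,Dg_\ve(U_\lambda)\to 0$ in $\mathcal{L}(W_{q,D}^2(I),L_q(I))$ as $\lambda\to 0$ (by analyticity of $g_\ve$ and of $\lambda\mapsto U_\lambda$), so that $-A-\lambda Dg_\ve(U_\lambda)$ remains the generator of an analytic semigroup with spectrum in a left half-plane $\{\mathrm{Re}\,z\le -\omega_1\}$ for $\lambda$ small, by a standard perturbation theorem for generators (the paper cites \cite[I.Cor.1.4.3]{LQPP}). With that correction the rest of part (ii), including reading off the $\|\partial_t u\|_{L_q}$ decay from the equation, goes through as you describe.
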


The first part of Theorem~\ref{TStable} is a consequence of the Implicit Function Theorem while the second part follows from the Principle of Linearized Stability, and the proofs are given in Section~\ref{SectTStable}. 
We shall point out that Theorem~\ref{TStable} provides uniqueness of steady-states with first components {\it in} $S_q(\kappa)$ for fixed $\lambda$ small. A result in this spirit is also shown in \cite[Thm.5.6]{GhoussoubGuoSIMA07}.} But, as pointed out before, for the small aspect ratio model \eqref{z0} it is known that below the critical threshold there are exactly two steady-states. If this would turn out to be true for the free boundary problem as well, that is, if there would be another smooth branch of steady-states emanating from $\lambda=0$, say, $V_\lambda\not= U_\lambda$, then the fact that $S_q(\kappa_1)\subset S_q(\kappa_2)$ for $0<\kappa_2<\kappa_1<1$ would imply that $\delta(\kappa)\searrow 0$ as $\kappa\searrow 0$ in Theorem~\ref{TStable}. Obviously, $V_\lambda\notin S_q(\kappa)$ for $\lambda<\delta(\kappa)$ and thus, as $\lambda\searrow 0$, the minimum of $V_\lambda$ has to approach $-1$ or the $W_q^2$-norm of $V_\lambda$ has to blow up\footnote{For the case of the small aspect ratio model, $V_\lambda$ approaches the $V$-shaped function $x\mapsto\vert x\vert-1$ as $\lambda\searrow 0$.}.

We also note that $\psi$ converges exponentially to $\Psi_\lambda$ in the $W_2^2$-norm as $t\rightarrow \infty$, see Corollary~\ref{RR1} for a precise statement. Finally, both components of the steady-state enjoy more regularity than stated, see \cite[Cor.10]{LaurencotWalker_ARMA}.

\medskip

More insight in the connection between the free boundary model and its small aspect ratio limit is offered in the next theorem. Indeed, we show that the solution $(u,\psi)=(u_\ve,\psi_\ve)$ to \eqref{u}-\eqref{bcpsi} provided by Theorem~\ref{A} converges to the solution $(u_0,\psi_0)$ of the small aspect ratio model \eqref{psi0}, \eqref{z0} as $\varepsilon\rightarrow 0$. This gives a rigorous justification of the formal derivation.

\begin{thm}[{\bf Small Aspect Ratio Limit}]\label{Bq}
Let $\lambda>0$, $q\in (2,\infty)$, $\kappa\in (0,1)$, and let $u^0\in S_q(\kappa)$  with $u^0(x)\le 0$ for $x\in I$. For $\varepsilon>0$ let $(u_\varepsilon,\psi_\varepsilon)$ be the unique solution to \eqref{u}-\eqref{bcpsi} on the maximal interval of existence $[0,T_m^\varepsilon)$. There are $\tau>0$, $\ve_0>0$, and $\kappa_0\in (0,1)$ depending only on $q$ and $\kappa$ such that $\ T_m^\varepsilon\ge \tau$ and $u_\ve(t)\in S_q(\kappa_0)$ for all $(t,\ve)\in [0,\tau]\times (0,\ve_0)$. Moreover, the small aspect ratio equation \eqref{z0}
has a unique solution
$$
u_0\in C^1\big([0,\tau],L_q(I)\big)\cap C\big([0,\tau],W_{q,D}^2(I)\big)
$$
satisfying $u_0(t)\in S_q(\kappa_0)$ for all $t\in [0,\tau]$ and such that the convergences
$$ 
u_{\varepsilon}\longrightarrow u_0\quad \text{in}\quad C^{1-\theta}\big([0,\tau], W_q^{2\theta}(I)\big)\ ,\quad 0<\theta<1\ ,
$$
and
\begin{equation}\label{z00} 
\psi_{\varepsilon}(t)\mathbf{1}_{\Omega(u_{\varepsilon}(t))}\longrightarrow \psi_{0}(t)\mathbf{1}_{\Omega(u_{0}(t))}\quad \text{in}\quad L_2\big(I\times (-1,0)\big)\ ,\quad t\in [0,\tau]\ ,
\end{equation}
hold as $\ve\rightarrow 0$, where $\psi_0$ is the potential given in \eqref{psi0}. Furthermore, there is $\Lambda(\kappa)>0$ such that the results above hold true for each  $\tau>0$ provided that $\lambda\in (0,\Lambda(\kappa))$.
\end{thm}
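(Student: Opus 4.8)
\emph{Strategy and a common existence time.} The plan is to run the contraction mapping argument behind Theorems~\ref{A} and~\ref{TD} \emph{uniformly in} $\varepsilon\in[0,\varepsilon_0]$, to identify the fixed point at $\varepsilon=0$ with the solution $u_0$ of~\eqref{z0}, to deduce $u_\varepsilon\to u_0$ from continuous dependence of this fixed point on $\varepsilon$, and finally to transfer the convergence to the potentials through the flattening of $\Omega(u)$ onto $I\times(0,1)$ used in Section~\ref{Sec2}. Recall that under this flattening, say $(x,z)\mapsto(x,(1+z)/(1+u(x)))$, the transformed potential solves an elliptic problem whose operator decomposes as $\mathcal{L}_{\varepsilon,u}=(1+u)^{-2}\partial_\eta^2+\varepsilon^2\mathcal{A}_u$, where $\mathcal{A}_u$ is a second-order operator (with no zeroth-order term) whose coefficients carry \emph{all} of the dependence on $\partial_x^2u$ and are bounded on $S_q(\kappa)$ in terms of $\|u\|_{W_{q,D}^2(I)}$ and $\kappa$. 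Consequently the $L_q(I)$-bound and the Lipschitz estimate on $S_q(\kappa)$ for the nonlocal nonlinearity $g_\varepsilon(u)$ of Proposition~\ref{L1} (the right-hand side $\varepsilon^2|\partial_x\psi|^2+|\partial_z\psi|^2$ of~\eqref{u} evaluated on the membrane), together with the smoothing estimates of the Dirichlet heat semigroup $(e^{t\partial_x^2})_{t\ge0}$ on $L_q(I)$, are all uniform for $\varepsilon\in[0,\varepsilon_0]$. Feeding these uniform bounds into the contraction argument yields $\tau>0$, $\varepsilon_0>0$, $\kappa_0\in(0,1)$ depending only on $q$ and $\kappa$, and a closed ball $\mathbb{B}$ in the function space used in Section~\ref{Sec2} on which the solution operator $\Gamma_\varepsilon$ is a contraction for \emph{every} $\varepsilon\in[0,\varepsilon_0]$, its fixed point $u_\varepsilon$ satisfying $u_\varepsilon(t)\in S_q(\kappa_0)$ on $[0,\tau]$; maximality gives $T_m^\varepsilon\ge\tau$, and the fixed point at $\varepsilon=0$ — whose nonlinearity is the limit $g_0(u)=(1+u)^{-2}$ obtained from~\eqref{psi0} — is precisely the unique solution $u_0\in C^1([0,\tau],L_q(I))\cap C([0,\tau],W_{q,D}^2(I))$ of~\eqref{z0}, which stays in $S_q(\kappa_0)$.

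\emph{Convergence of $u_\varepsilon$.} Since $\varepsilon\mapsto\Gamma_\varepsilon(v)$ is continuous on $[0,\varepsilon_0]$ for each $v\in\mathbb{B}$ — this being the continuity in $\varepsilon$ of the flattened elliptic problem, and hence of $g_\varepsilon(v)$, down to $\varepsilon=0$ — and $\Gamma_\varepsilon$ is a uniform contraction, the fixed points depend continuously on $\varepsilon$, so $u_\varepsilon\to u_0$ in the norm of $\mathbb{B}$. Concretely, subtracting the Duhamel formulas gives
\[
u_\varepsilon(t)-u_0(t)=-\lambda\int_0^t e^{(t-s)\partial_x^2}\big[g_\varepsilon(u_\varepsilon(s))-g_0(u_0(s))\big]\,\rd s\,,
\]
and, splitting $g_\varepsilon(u_\varepsilon)-g_0(u_0)=[g_\varepsilon(u_\varepsilon)-g_\varepsilon(u_0)]+[g_\varepsilon(u_0)-g_0(u_0)]$, one bounds the first bracket by the uniform Lipschitz constant times $\|u_\varepsilon-u_0\|$, notes that the second tends to $0$ in $L_q(I)$ uniformly on $[0,\tau]$ because $\{u_0(s):s\in[0,\tau]\}$ is compact in $W_{q,D}^2(I)$ and $g_\varepsilon\to g_0$ uniformly on compact subsets of $S_q(\kappa_0)$, and closes the estimate with the singular Gronwall lemma using $\|e^{t\partial_x^2}\|_{\mathcal{L}(L_q(I),W_q^{2\alpha}(I))}\le Ct^{-\alpha}$ with the exponent $\alpha<1$ of Proposition~\ref{L1}. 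One thereby obtains convergence in $C([0,\tau],W_q^{2\alpha}(I))$; since $u_\varepsilon$ is bounded in $C^1([0,\tau],L_q(I))\cap C([0,\tau],W_{q,D}^2(I))$ uniformly in $\varepsilon$ (from the equation and the $S_q(\kappa_0)$-bound), interpolation upgrades this to $u_\varepsilon\to u_0$ in $C^{1-\theta}([0,\tau],W_q^{2\theta}(I))$ for every $\theta\in(0,1)$, and in particular $u_\varepsilon\to u_0$ in $C([0,\tau],C^1(\bar I))$ since $q>2$.

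\emph{Convergence of the potentials, and the small-voltage statement.} Because $u^0\le0$, Theorem~\ref{A}(iii) yields $u_\varepsilon\le0$, so $\Omega(u_\varepsilon(t))\subset I\times(-1,0)$ for all $\varepsilon$ and $t$, and every potential lives in the same box after flattening. Set $w_\varepsilon:=\tilde\psi_\varepsilon-\eta$, where $\tilde\psi_\varepsilon$ is the transformed potential associated with $u_\varepsilon$: then $w_\varepsilon$ has zero trace on $\partial(I\times(0,1))$ and solves $\mathcal{L}_{\varepsilon,u_\varepsilon}w_\varepsilon=-\varepsilon^2\mathcal{A}_{u_\varepsilon}\eta$, and since applying $\mathcal{A}_{u_\varepsilon}$ to the function $\eta$ leaves only its first-order-in-$\eta$ coefficient — the one carrying $\partial_x^2u_\varepsilon$ — one has $\|\varepsilon^2\mathcal{A}_{u_\varepsilon}\eta\|_{L_2}\le C\varepsilon^2\|u_\varepsilon\|_{W_{q,D}^2(I)}\le C\varepsilon^2/\kappa_0$. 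A standard energy estimate, in which the $\varepsilon^2$-weighted $x$-derivative terms are harmless and the Poincar\'e inequality in $\eta$ is at hand, then gives $\|w_\varepsilon(t)\|_{L_2(I\times(0,1))}\le C\varepsilon^2/\kappa_0\to0$ uniformly on $[0,\tau]$, i.e. $\tilde\psi_\varepsilon(t)\to\eta$ in $L_2(I\times(0,1))$; undoing the ($u_\varepsilon$-dependent) flattening, which converges since $u_\varepsilon\to u_0$ uniformly, and recalling $\psi_0=(1+z)/(1+u_0)$, yields~\eqref{z00} by dominated convergence. Finally, for the last assertion, Theorem~\ref{TD}(i) provides a threshold $\Lambda(\kappa)>0$ — which the uniformity of the estimates above permits to be taken independent of $\varepsilon\in(0,\varepsilon_0)$ — below which $u_\varepsilon$ exists globally with $u_\varepsilon(t)\in S_q(\kappa_0)$ for all $t\ge0$; the small aspect ratio model~\eqref{z0} likewise has a global solution in $S_q(\kappa_0)$, and the arguments above apply verbatim on each interval $[0,\tau]$, the Gronwall constant being allowed to depend on $\tau$.

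\emph{The main obstacle.} The delicate point is the $\varepsilon$-uniformity of the elliptic analysis: $\mathcal{L}_{\varepsilon,u}$ loses uniform ellipticity in the $x$-direction as $\varepsilon\to0$, so neither the $L_q(I)$- and Lipschitz bounds for $g_\varepsilon$ nor the convergence $g_\varepsilon\to g_0$ follow from standard elliptic theory on $\varepsilon$-independent norms. What makes the argument work is the structural fact that the limit problem $(1+u)^{-2}\partial_\eta^2\tilde\psi=0$ is an explicitly solvable ODE — with solution $\tilde\psi=\eta$ — and that $\varepsilon^2\mathcal{A}_u$, which carries all of the dependence on $\partial_x^2u$, is a genuine and controllably small perturbation uniformly over $S_q(\kappa)$; this is exactly what the estimates behind Proposition~\ref{L1} must deliver, and with it granted the remaining fixed-point, Gronwall, interpolation and change-of-variables steps are routine.
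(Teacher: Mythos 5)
There is a genuine gap, and it sits exactly where you park it at the end under ``the main obstacle'': your entire architecture (uniform contraction, continuous dependence of the fixed point on $\varepsilon$, and the Gronwall step for $u_\varepsilon-u_0$) rests on the claim that the bound and the Lipschitz constant of $g_\varepsilon$ on $S_q(\kappa_0)$, with values in a space where the heat semigroup has an integrable singularity, can be taken \emph{uniform in} $\varepsilon\in(0,\varepsilon_0]$. This is not delivered by Proposition~\ref{L1}: the constants $c_{10}(\kappa,\varepsilon)$, $c_{11}(\kappa,\varepsilon)$ there are built from $c_1(\kappa,\varepsilon)$ of Lemma~\ref{L2}, which is obtained by a compactness/contradiction argument with no control whatsoever on its $\varepsilon$-dependence, and the operator $\mathcal{L}_v$ genuinely loses ellipticity in the $x$-direction as $\varepsilon\to0$, so the full $W_2^2(\Omega)$ estimate (and hence the Lipschitz estimate \eqref{RR} for $v\mapsto\phi_v$, which is what feeds the Lipschitz continuity of $v\mapsto|\partial_\eta\phi_v(\cdot,1)|^2$) cannot be expected to hold uniformly. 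Writing ``with it granted the remaining steps are routine'' grants yourself the one estimate that is neither proved in the paper nor plausibly obtainable by the route you indicate. The same uniformity issue undercuts your appeal to Theorem~\ref{TD}(i) for the final assertion, since $\lambda_*$ there depends on $\varepsilon$ through these constants.

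The paper's proof is organized precisely to avoid ever needing a uniform Lipschitz constant. It proves only uniform \emph{bounds}: weighted energy estimates for $\Phi_\varepsilon=\phi_\varepsilon-\eta$ (Lemma~\ref{le:z1}), exploiting that the right-hand side $f_{u_\varepsilon}$ is $O(\varepsilon^2)$ in $L_q(\Omega)$ and that the energy identity controls $\partial_\eta\Phi_\varepsilon$ without uniform ellipticity in $x$, culminating in the trace bound $\|\partial_\eta\Phi_\varepsilon(t,\cdot,1)\|_{W_2^{1/2}(I)}\le K_2\varepsilon$. This gives (a) a uniform bound $\|g_\varepsilon(u_\varepsilon(t))\|_{W_2^{2\sigma}(I)}\le K_5$, which via Duhamel and a continuation argument (not a contraction) yields the common existence time $\tau$ and, for $\lambda<\Lambda(\kappa)$ defined through the $\varepsilon$-independent $K_5$, global existence; and (b) the convergence $|\partial_\eta\phi_\varepsilon(\cdot,1)|^2\to1$ in $L_q(I)$ uniformly on $[0,\tau]$. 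The convergence $u_\varepsilon\to u_0$ is then obtained by \emph{compactness} ($(u_\varepsilon)$ bounded in $C^1([0,\tau],L_q)\cap C([0,\tau],W_q^2)$, Arzel\`a--Ascoli in $C^{1-\theta}([0,\tau],W_q^{2\theta})$), identification of the limit via stability of the linear heat equation, and uniqueness of the limit problem --- no Gronwall, no uniform contraction. If you want to salvage your scheme, you would have to replace the contraction/Gronwall steps by this bounds-plus-compactness argument, or else actually prove a uniform weighted Lipschitz estimate for $v\mapsto\partial_\eta\phi_v(\cdot,1)$, which is substantial new work. (A minor further point: your claimed rate $\|w_\varepsilon\|_{L_2}\le C\varepsilon^2$ is too optimistic; the energy identity yields $\|\partial_\eta\Phi_\varepsilon\|_{L_2}\le C\varepsilon$ and hence $O(\varepsilon)$ for $\|\Phi_\varepsilon\|_{L_2}$, which is all that is needed for \eqref{z00}.)
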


A similar result has been established for the stationary problem in \cite[Theorem~2]{LaurencotWalker_ARMA} and the proof of Theorem~\ref{Bq} is performed along the same lines provided one ensures an $\ve$-independent lower bound $\tau>0$ on $T_m^\ve$. In addition, in \cite{LaurencotWalker_ARMA} we took advantage of the fact that a $W_\infty^2(I)$-bound is available for solutions to the stationary problem. We refine the arguments here by showing that a $W_q^2(I)$-bound is sufficient for $q>2$.

\section{Local and Global Well-Posedness: Proof of Theorem~\ref{A} and Theorem~\ref{TD}($\mathrm{i}$) }\label{Sec2} 

The starting point for the proof of Theorem~\ref{A} is to transform the free boundary problem \eqref{psi}-\eqref{bcpsi} to the fixed rectangle $\Omega:=I\times (0,1)$. More precisely, let $q>2$ be fixed and consider an arbitrary function $v\in W_{q,D}^2(I)$ taking values in $(-1,\infty)$. We then define a diffeomorphism \mbox{$T_v:=\overline{\Omega(v)}\rightarrow \overline{\Omega}$} by setting
\begin{equation}\label{Tu}
T_v(x,z):=\left(x,\frac{1+z}{1+v(x)}\right)\ ,\quad (x,z)\in \overline{\Omega(v)}
\end{equation}
with $\Omega(v) = \left\{ (x,z)\in I\times (-1,\infty)\ ;\quad -1 < z < v(x) \right\}$. Clearly, its inverse is
\begin{equation}\label{Tuu}
T_v^{-1}(x,\eta)=\big(x,(1+v(x))\eta-1\big)\ ,\quad (x,\eta)\in \overline{\Omega}\ ,
\end{equation}
and the Laplace operator is transformed to the $v$-dependent differential operator 
\begin{equation*}
\begin{split}
\mathcal{L}_v w\, :=\, & \e^2\ \partial_x^2 w - 2\e^2\ \eta\ \frac{\partial_x v(x)}{1+v(x)}\ \partial_x\partial_\eta w
+ \frac{1+\e^2\eta^2(\partial_x v(x))^2}{(1+v(x))^2}\ \partial_\eta^2 w\\
& + \e^2\ \eta\ \left[ 2\ \left(\frac{\partial_x v(x)}{1+v(x)} \right)^2 - \frac{\partial_x^2 v(x)}{1+v(x)} \right]\ \partial_\eta w\ .
\end{split}
\end{equation*}
The boundary value problem  \eqref{psi}-\eqref{bcpsi} is then obviously equivalent to
\begin{eqnarray}
\big(\mathcal{L}_{u(t)}\phi\big) (t,x,\eta)\!\!\!&=0\ ,&(x,\eta)\in\Omega\ , \quad t>0\ ,\label{23}\\
\phi(t,x,\eta)\!\!\!&=\eta\ , &(x,\eta)\in \partial\Omega\ , \quad t>0\ ,\label{24}
\end{eqnarray}
for $\phi=\psi\circ T_{u(t)}^{-1}$.
With this notation, the evolution equation \eqref{u} for $u$ becomes
\begin{align}
\partial_t u -\partial_x^2 u &= {-\lambda}\ \left[ \frac{1+\e^2(\partial_x u)^2}{(1+u)^2} \right]\ \vert\partial_\eta\phi(\cdot,1)\vert^2\ ,\quad x\in I\ ,\quad t>0\ , \label{33}
\end{align}
after noticing that we have $\partial_x\phi(t,x,1)=0$ for $x\in I$ and $t>0$ due to $\phi(t,x,1)=1$ by \eqref{24}. To set the stage for the proof of Theorem~\ref{A} we first observe:

\begin{prop}\label{L1}
Let $\kappa\in (0,1)$ and $\varepsilon>0$. For each $v\in S_q(\kappa)$ there is a unique solution $\phi_v\in W_2^2(\Omega)$ to 
\begin{eqnarray}
\big(\mathcal{L}_v \phi_v\big) (x,\eta)\!\!\!&=0\ ,&(x,\eta)\in\Omega\ ,\label{230}\\
\phi_v(x,\eta)\!\!\!&=\eta\ , &(x,\eta)\in \partial\Omega\ .\label{240}
\end{eqnarray}
In addition, defining $\tilde{v}$ by $\tilde{v}(x):=v(-x)$ for $x\in I$, we have $\phi_{\tilde{v}}(x,\eta)=\phi_v(-x,\eta)$ for $(x,\eta)\in\Omega$. Moreover, for $2\sigma\in [0,1/2)$, the mapping
$$
g_\ve: S_q(\kappa)\longrightarrow W_{2,D}^{2\sigma}(I)\ ,\quad v\longmapsto \frac{1+\e^2(\partial_x v)^2}{(1+v)^2}\  \vert\partial_\eta\phi_v(\cdot,1)\vert^2
$$
is {analytic,} globally Lipschitz continuous, and bounded with $g_\ve(0)=1$.
\end{prop}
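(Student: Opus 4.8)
The plan is to treat the elliptic problem \eqref{230}--\eqref{240} as a linear boundary value problem with coefficients depending on $v$ and its derivatives, and then exploit the analytic dependence of these coefficients on $v\in S_q(\kappa)$. First I would subtract the boundary data: writing $\phi_v = \eta + w$ reduces \eqref{230}--\eqref{240} to $\mathcal{L}_v w = -\mathcal{L}_v\eta$ in $\Omega$ with homogeneous Dirichlet data, where $\mathcal{L}_v\eta$ is the zeroth-order term $\e^2\eta\big[2(\partial_x v/(1+v))^2 - \partial_x^2 v/(1+v)\big]$, which lies in $L_q(\Omega)$ (hence in $L_2(\Omega)$) since $v\in W_{q,D}^2(I)$ and $-1+\kappa<v$. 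The operator $\mathcal{L}_v$ is uniformly elliptic on $\Omega$: its principal part has the block-diagonal form $\e^2\partial_x^2 + c_v\partial_\eta^2$ up to a first-order cross term, and the ellipticity constant is controlled from below in terms of $\e$ and $\kappa$ (using $1+v>\kappa$ and the bound on $\|v\|_{W_{q,D}^2}$ via the embedding $W_q^1(I)\hookrightarrow C(\bar I)$ for the term $\e^2\eta^2(\partial_x v)^2/(1+v)^2\ge 0$). Existence and uniqueness of $w\in W_2^2(\Omega)$ then follow from the Lax–Milgram theorem combined with elliptic regularity (the coefficients are at least in $L_q(\Omega)\subset L_2(\Omega)$ with the top-order ones continuous), or by a reference to standard results on second-order elliptic problems in convex/smooth-enough domains; note $\Omega$ is a rectangle so some care near corners is needed, but the compatibility of the Dirichlet datum $\eta$ there is fine. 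This gives the first assertion, and existence of $\phi_v\in W_2^2(\Omega)$.

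The symmetry statement $\phi_{\tilde v}(x,\eta)=\phi_v(-x,\eta)$ I would get for free by uniqueness: one checks that $(x,\eta)\mapsto\phi_v(-x,\eta)$ solves \eqref{230}--\eqref{240} with $v$ replaced by $\tilde v$, since $\mathcal{L}_v$ has even coefficients in the sense that reflecting $x\mapsto -x$ and $v\mapsto\tilde v$ leaves the operator invariant (the cross term picks up one sign from $\partial_x$ and one from $\partial_x v$, hence is unchanged; the first-order $\partial_\eta w$ coefficient also transforms correctly), and the boundary datum $\eta$ is independent of $x$.

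For the map $g_\ve$, the key is to show that $v\mapsto\phi_v$ is analytic from $S_q(\kappa)$ into $W_2^2(\Omega)$ and then that $v\mapsto\partial_\eta\phi_v(\cdot,1)$ is analytic into $W_{2,D}^{2\sigma}(I)$ for $2\sigma\in[0,1/2)$. Analyticity of $v\mapsto\phi_v$ I would obtain from the Implicit Function Theorem for analytic maps: the coefficients of $\mathcal{L}_v$ are rational expressions (analytic) in $v$, $\partial_x v$, $\partial_x^2 v$, and the map $v\mapsto\mathcal{L}_v$ is analytic from $S_q(\kappa)$ into $\mathcal{L}(W_{2,D}^2(\Omega), L_q(\Omega))$ — here the multiplication estimates in $W_q^1(I)$ (a Banach algebra since $q>2$) and $1/(1+v)\in W_q^1(I)$ via $v>-1+\kappa$ are what is needed, together with $W_q^2(\Omega)\hookrightarrow W_\infty^1$-type embeddings so that $\partial_x v\,\partial_x\partial_\eta w$ etc.\ land in $L_q(\Omega)$. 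Since for each $v$ the operator $\mathcal{L}_v$ is an isomorphism $W_{2,D}^2(\Omega)\to L_2(\Omega)$, and isomorphisms form an open set on which inversion is analytic, the map $v\mapsto\phi_v-\eta = -\mathcal{L}_v^{-1}(\mathcal{L}_v\eta)$ is analytic into $W_2^2(\Omega)$. Composing with the trace operator $W_2^2(\Omega)\to W_2^{3/2}(\partial\Omega)$, then taking $\partial_\eta$ on the top edge $\eta=1$ lands in $W_2^{1/2-}(I)$; restricting to $2\sigma<1/2$ gives the target space $W_{2,D}^{2\sigma}(I)$ (the Dirichlet constraint at $x=\pm1$ being vacuous for $2\sigma<1/2$). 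Finally $g_\ve(v)=\big(1+\e^2(\partial_x v)^2\big)(1+v)^{-2}|\partial_\eta\phi_v(\cdot,1)|^2$ is a product of analytic maps into the multiplier algebra $W_q^1(I)\hookrightarrow C(\bar I)$ and the $W_{2,D}^{2\sigma}(I)$-valued square, hence analytic; boundedness and global Lipschitz continuity follow from the $\kappa$-uniform elliptic estimate (all constants depend only on $\kappa$ and $\e$), and $g_\ve(0)=1$ is immediate since $v=0$ gives $\mathcal{L}_0 = \e^2\partial_x^2+\partial_\eta^2$ with solution $\phi_0(x,\eta)=\eta$, so $\partial_\eta\phi_0\equiv1$.

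The main obstacle I expect is the analyticity of $v\mapsto\phi_v$ together with the sharp trace regularity: one must verify carefully that the coefficients of $\mathcal{L}_v$, which involve $\partial_x^2 v\in L_q(I)$ (only $L_q$, not continuous, since we have left the $W_\infty^2$ setting), still give a well-defined analytic map into $\mathcal{L}(W_{2,D}^2(\Omega),L_q(\Omega))$ — this is where the restriction $q>2$ is used, so that $W_q^1$ multiplications and the embedding $W_q^2(\Omega)\hookrightarrow C^1$-ish control the cross terms — and that the elliptic estimate can be closed with right-hand side merely in $L_q(\Omega)\subset L_2(\Omega)$, uniformly in $v\in S_q(\kappa)$. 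The secondary subtlety is that $\Omega$ is only a rectangle, so one should invoke an $L_2$ elliptic regularity result valid in convex polygons (or reflect across the sides) to legitimately claim $\phi_v\in W_2^2(\Omega)$ rather than just $H^1$.
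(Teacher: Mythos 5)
Your architecture matches the paper's: subtract the boundary datum, solve the transformed elliptic problem in $W_{2,D}^2(\Omega)$ with estimates uniform over $S_q(\kappa)$, get the symmetry from uniqueness, obtain analyticity of $v\mapsto\phi_v$ from analyticity of $v\mapsto\mathcal{L}_v$ together with analyticity of operator inversion, and finish with the trace theorem and the pointwise multiplications $W_2^{1/2}\cdot W_2^{1/2}\hookrightarrow W_2^{2\sigma_1}$ and $W_q^1\cdot W_2^{2\sigma_1}\hookrightarrow W_{2,D}^{2\sigma}$ for $2\sigma<2\sigma_1<1/2$. Two points need repair, one of them substantive. First, Lax--Milgram does not apply as stated: written in divergence form, $-\mathcal{L}_v$ carries first-order terms $b_1(v)\,\partial_x+b_2(v)\,\partial_\eta$ whose bilinear form is only G\aa rding-coercive, so existence and uniqueness come from the Fredholm alternative plus the maximum principle (\cite[Thm.~8.3]{GilbargTrudinger}, which is what the paper invokes), not from coercivity.

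The substantive gap is your assertion that ``all constants depend only on $\kappa$ and $\varepsilon$.'' That uniformity is exactly what has to be proved and it does not come for free from elliptic regularity: the $W_2^2$ estimate available in the rectangle has the form $\|\Phi\|_{W_2^2(\Omega)}\le c\left(\|\Phi\|_{L_2(\Omega)}+\|F\|_{L_2(\Omega)}\right)$, and eliminating the $\|\Phi\|_{L_2(\Omega)}$ term uniformly over $v\in\overline{S}_q(\kappa)$ is the content of the paper's Lemma~\ref{L2}, proved by a compactness/contradiction argument (weak compactness of $\overline{S}_q(\kappa)$ in $W_q^2(I)$, compact embedding into $C^1([-1,1])$, uniform convergence of the coefficients $a_{ij}(v_n), b_i(v_n)$, and uniqueness for the limit problem). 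Since this uniform constant is precisely what yields the boundedness of $g_\ve$ and, via the identity $\mathcal{A}(v_1)^{-1}-\mathcal{A}(v_2)^{-1}=\mathcal{A}(v_1)^{-1}\left(\mathcal{A}(v_2)-\mathcal{A}(v_1)\right)\mathcal{A}(v_2)^{-1}$, its \emph{global} (rather than merely local) Lipschitz continuity on $S_q(\kappa)$, it cannot be left as an assertion. A minor slip besides: $\mathcal{L}_v$ maps $W_{2,D}^2(\Omega)$ into $L_2(\Omega)$, not $L_q(\Omega)$ --- for $w\in W_2^2(\Omega)$ the second derivatives are only square integrable, and the term involving $\partial_x^2 v\,\partial_\eta w$ lands in $L_2(\Omega)$ through $L_q(\Omega)\cdot W_2^1(\Omega)\hookrightarrow L_2(\Omega)$; your later use of the isomorphism $W_{2,D}^2(\Omega)\to L_2(\Omega)$ is the correct setting.
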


The proof of Proposition~\ref{L1} shares some common steps with that of \cite[Lem.~5 \&~6]{LaurencotWalker_ARMA}, but requires further developments, in particular establishing the Lipschitz continuity of $g_\ve$ which was not needed in \cite{LaurencotWalker_ARMA}. We first derive suitable properties of the operator $\mathcal{L}_v$ for $v$ in the closure $$\overline{S}_q(\kappa)=\left\{u\in W_{q,D}^2(I)\,;\, \|u\|_{W_{q,D}^2(I)}\le 1/\kappa \;\;\text{ and }\;\; -1+\kappa\le u(x) \text{ for } x\in I \right\}
$$ 
of $S_q(\kappa)$, which we gather in the next lemma.

\begin{lem}\label{L2}
Let $\kappa\in (0,1)$ and $\varepsilon>0$. For each $v\in \overline{S}_q(\kappa)$ and $F\in L_2(\Omega)$, there is a unique solution $\Phi\in W_{2,D}^2(\Omega)$ to the boundary value problem 
\begin{eqnarray}
-\mathcal{L}_v\Phi  \!\!\!&=F\ &\text{in}\ \Omega\ ,\label{231}\\
\Phi \!\!\!&=0\  &\text{on}\ \partial\Omega\ .\label{241}\ 
\end{eqnarray}
Moreover, there is a constant $c_1(\kappa,\varepsilon)>0$ depending only on $q$, $\kappa$, and $\varepsilon$ such that
\begin{equation}
\|\Phi\|_{W_2^2(\Omega)} \le c_1(\kappa,\varepsilon)\ \|F\|_{L_2(\Omega)}\, . \label{y3}
\end{equation}
\end{lem}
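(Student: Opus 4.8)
The plan is to set this up as a standard elliptic problem in divergence/nondivergence form with coefficients of sufficient regularity, and to quantify the constant via compactness plus the uniform ellipticity and uniform coefficient bounds on $\overline{S}_q(\kappa)$. First I would verify that $\mathcal{L}_v$ is uniformly elliptic on $\Omega$ for $v\in\overline{S}_q(\kappa)$: the principal symbol has matrix
\[
\begin{pmatrix} \e^2 & -\e^2\eta\,\partial_x v/(1+v) \\ -\e^2\eta\,\partial_x v/(1+v) & \big(1+\e^2\eta^2(\partial_x v)^2\big)/(1+v)^2 \end{pmatrix},
\]
whose determinant is $\e^2/(1+v)^2$, bounded below by $\e^2\kappa^{2}/(\text{const})$ since $1+v\le 1/\kappa+1$, and whose entries are bounded above in terms of $\e$, $\kappa$, and $\|v\|_{W^2_{q,D}}\le 1/\kappa$ (using the continuous embedding $W^1_q(I)\hookrightarrow C(\bar I)$ for $q>2$ to control $\partial_x v$ in sup-norm, hence $(\partial_x v)^2$ in $C(\bar I)$). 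The lower-order coefficient $\e^2\eta\big[2(\partial_x v/(1+v))^2-\partial_x^2 v/(1+v)\big]$ only lies in $L_q(\Omega)$ because of the $\partial_x^2 v$ term; this is the one genuinely delicate point. Since $q>2$, this coefficient is in $L_q(\Omega)\subset L_2(\Omega)$ with norm controlled by $\|v\|_{W^2_{q,D}}$, and $L_q$-integrability of the drift with $q>2=\dim\Omega$ is exactly the borderline regularity under which the $W^2_2$ elliptic theory (e.g. the $L_p$ theory of Agmon–Douglis–Nirenberg, or a Lax–Milgram argument after rewriting in divergence form) still applies and yields a solution in $W^2_2(\Omega)\cap W^1_{2,0}(\Omega)$.

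For existence and uniqueness I would rewrite $\mathcal{L}_v$ in divergence form. Writing $\mathcal{L}_v\Phi=\mathrm{div}(A_v\nabla\Phi)+b_v\cdot\nabla\Phi$ with $A_v$ the symmetric matrix above (its entries in $W^1_q(\Omega)$, since they are built from $v,\partial_x v,1/(1+v)$, all in $W^1_q$ or better) and $b_v\in L_q(\Omega)$ collecting the remaining first-order terms, the bilinear form $a_v(\Phi,\chi)=\int_\Omega A_v\nabla\Phi\cdot\nabla\chi-\int_\Omega (b_v\cdot\nabla\Phi)\chi$ on $W^1_{2,0}(\Omega)$ is bounded and, by the $L_q$-smallness-in-measure of $b_v$ combined with Poincaré and the uniform ellipticity, coercive up to a compact perturbation; uniqueness (from the maximum principle for $\mathcal{L}_v$, or from a Fredholm-alternative argument using that $F=0$ forces $\Phi=0$) then promotes this to invertibility, giving a unique weak solution. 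Interior and up-to-the-boundary $W^2_2$ regularity for the rectangle $\Omega$ — whose boundary is only Lipschitz but can be handled by reflection across the sides $x=\pm1$ and $\eta\in\{0,1\}$, or by noting the problem extends to a smooth domain after even/odd reflection because the data $\Phi=0$ is compatible — upgrades $\Phi$ to $W^2_{2,D}(\Omega)$.

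The estimate \eqref{y3} with a constant $c_1(\kappa,\e)$ uniform over $v\in\overline{S}_q(\kappa)$ is then obtained by a contradiction/compactness argument: if not, there are $v_k\in\overline{S}_q(\kappa)$, $F_k\in L_2(\Omega)$ with $\|F_k\|_{L_2}=1$ and corresponding $\Phi_k$ with $\|\Phi_k\|_{W^2_2(\Omega)}\to\infty$; normalizing, $\hat\Phi_k=\Phi_k/\|\Phi_k\|_{W^2_2}$ solves $-\mathcal{L}_{v_k}\hat\Phi_k=F_k/\|\Phi_k\|_{W^2_2}\to0$ in $L_2$. Since $\overline{S}_q(\kappa)$ is bounded in $W^2_q(I)$ hence relatively compact in $W^1_q(I)\hookrightarrow C^{1-}(\bar I)$ (and $\partial_x^2 v_k$ is bounded in $L_q$), a subsequence has $v_k\to v_\infty\in\overline{S}_q(\kappa)$ with $A_{v_k}\to A_{v_\infty}$ in $C(\bar\Omega)$ and $b_{v_k}\rightharpoonup b_{v_\infty}$ weakly in $L_q(\Omega)$; $\hat\Phi_k\rightharpoonup\hat\Phi_\infty$ in $W^2_2$ and strongly in $W^1_2$, so in the limit $-\mathcal{L}_{v_\infty}\hat\Phi_\infty=0$ with $\hat\Phi_\infty=0$ on $\partial\Omega$, whence $\hat\Phi_\infty=0$ by uniqueness; but then the a priori bound $\|\hat\Phi_k\|_{W^2_2}\le C(\|\mathcal{L}_{v_k}\hat\Phi_k\|_{L_2}+\|\hat\Phi_k\|_{L_2})$ (Calderón–Zygmund with constant depending only on the ellipticity and coefficient bounds, which are uniform on $\overline{S}_q(\kappa)$) forces $\|\hat\Phi_k\|_{W^2_2}\to0$, contradicting $\|\hat\Phi_k\|_{W^2_2}=1$. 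I expect the main obstacle to be the bookkeeping needed to make the elliptic $W^2_2$-estimate truly uniform over $\overline{S}_q(\kappa)$ given that the drift $b_v$ is only in $L_q$, not $L_\infty$ — one must invoke a version of the elliptic estimate whose constant depends on $b_v$ only through its $L_q$-norm (and the ellipticity constants), and check that the $C(\bar\Omega)$-convergence of the top-order coefficients $A_{v_k}$, which do embed into $C$, is enough to pass to the limit in the second-order terms.
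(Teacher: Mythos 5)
Your proposal is correct and follows essentially the same route as the paper: uniform ellipticity on $\overline{S}_q(\kappa)$, rewriting $\mathcal{L}_v$ in divergence form, existence and uniqueness of a weak solution via the Fredholm alternative, $W^2_2$-regularity on the rectangle, and a compactness--contradiction argument to remove the $\|\Phi\|_{L_2}$ term and make the constant uniform over $\overline{S}_q(\kappa)$. The one point you flag as delicate in fact disappears: in divergence form the second derivative $\partial_x^2 v$ is entirely absorbed into $\partial_x a_{12}(v)$, and the residual drift coefficients $b_1(v)=\varepsilon^2\,\partial_x v/(1+v)$ and $b_2(v)=-\varepsilon^2\eta\left(\partial_x v/(1+v)\right)^2$ are bounded in $L_\infty(\Omega)$ uniformly on $\overline{S}_q(\kappa)$, so no borderline $L_q$-drift theory is needed.
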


\begin{proof}[{\bf Proof}]
First note that the definition of $\overline{S}_q(\kappa)$ and Sobolev's embedding theorem guarantee the existence of some constant $c_0>0$ depending only on $q$ such that, for $v\in \overline{S}_q(\kappa)$,
\begin{equation}\label{c2}
1+v(x)\ge \kappa\ ,\quad x\in I\ ,\quad\text{ and }\;\; \|v\|_{C^1([-1,1])}\le \frac{c_0}{\kappa}\ .
\end{equation}
It follows from the proof of \cite[Lem.~5]{LaurencotWalker_ARMA} that, due to \eqref{c2}, the operator $-\mathcal{L}_v$ is elliptic with ellipticity constant $\nu(\kappa,\varepsilon)>0$ being independent of $v\in  \overline{S}_q(\kappa)$. Moreover, writing $-\mathcal{L}_v$ in divergence form, 
\begin{align*}
-\mathcal{L}_v w =& -\partial_x \left( a_{11}(v)\ \partial_x w + a_{12}(v)\ \partial_\eta w \right) -\partial_\eta \left( a_{21}(v)\ \partial_x w + a_{22}(v)\ \partial_\eta w \right) \\
& + b_1(v)\  \partial_x w + b_2(v)\  \partial_\eta w\ ,
\end{align*}
with
\begin{align*}
a_{11}(v) := \varepsilon^2\,, \qquad & \qquad a_{22}(v) := \frac{1+\varepsilon^2\ \eta^2\ |\partial_x v(x)|^2}{(1+v(x))^2} \,, \\
a_{12}(v) := - \varepsilon^2\ \eta\ \frac{\partial_x v(x)}{1+v(x)}\,, \qquad & \qquad a_{21}(v) := a_{12}(v)\,, \\
b_1(v) := \varepsilon^2\ \frac{\partial_x v(x)}{1+v(x)}\,, \qquad & \qquad b_2(v) := - \varepsilon^2\ \eta\ \left( \frac{\partial_x v(x)}{1+v(x)} \right)^2\ ,
\end{align*}
we see from \eqref{c2} and the definition of $\overline{S}_q(\kappa)$ that 
\begin{equation}
\sum_{i,j=1}^2 \|a_{ij}(v)\|_{W_q^1(\Omega)} + \sum_{i=1}^2 \|b_i(v)\|_{L_\infty(\Omega)} \le c_2(\kappa,\varepsilon) \label{y4b}
\end{equation} 
for all $v\in \overline{S}_q(\kappa)$. Moreover, the embedding of $W_q^1(I)$ in $C([-1,1])$ ensures that $a_{ij}(v)$ belongs to $C(\overline{\Omega})$ for $1\le i,j\le 2$ and $v\in \overline{S}_q(\kappa)$. It then follows from \cite[Thm.~8.3]{GilbargTrudinger} that, given $v\in \overline{S}_q(\kappa)$ and $F\in L_2(\Omega)$, the boundary value problem \eqref{231}--\eqref{241} has a unique weak solution $\Phi\in W_{2,D}^1(\Omega)$. Furthermore, the regularity of $\Phi$ and \eqref{y4b} ensure that $G:= F - b_1(v)\ \partial_x\Phi - b_2(v)\ \partial_\eta\Phi$ belongs to $L_2(\Omega)$, and we are in a position to apply \cite[Chapt.~3, Thm.~9.1]{LadyzhenskayaUraltsevaLQEE} to conclude that $\Phi$ is actually the unique solution in $W_{2,D}^2(\Omega)$ to the boundary value problem
$$
-\mathcal{L}_v^0 \Phi  =G\ \text{ in }\ \Omega\ ,\qquad
\Phi =0\  \text{ on }\ \partial\Omega\ ,
$$
where $\mathcal{L}_v^0$ denotes the principal part of the operator $\mathcal{L}_v$, that is,
$$
-\mathcal{L}_v^0 w\, := -\partial_x \left( a_{11}(v)\ \partial_x w + a_{12}(v)\ \partial_\eta w \right) -\partial_\eta \left( a_{21}(v)\ \partial_x w + a_{22}(v)\ \partial_\eta w \right)\ .
$$
In addition, it follows from \cite[Chapt.~3, Thm.~10.1]{LadyzhenskayaUraltsevaLQEE} that there is a constant $c_3(\kappa,\varepsilon)>0$ depending only on $q$, $\kappa$, and $\varepsilon$ such that
$$
\|\Phi\|_{W_2^2(\Omega)} \le c_3(\kappa,\varepsilon)\ \left( \|\Phi\|_{L_2(\Omega)} + \|G\|_{L_2(\Omega)} \right)\ .
$$
Combining the previous inequality with \eqref{y4b} and the inequality
\begin{align*}
\|\partial_x\Phi\|_{L_2(\Omega)}^2 + \|\partial_\eta\Phi\|_{L_2(\Omega)}^2  =  - \int_\Omega \Phi\ \left( \partial_x^2\Phi + \partial_\eta^2\Phi \right) \rd (x,\eta)  
\le  \delta^2\ \|\Phi\|_{W_2^2(\Omega)}^2 + \frac{1}{4\delta^2}\ \|\Phi\|_{L_2(\Omega)}^2
\end{align*} 
which is valid for all $\delta>0$, we are led to
\begin{align*}
\|\Phi\|_{W_2^2(\Omega)} & \le  c_3(\kappa,\varepsilon)\ \left[ \|\Phi\|_{L_2(\Omega)} + \|F\|_{L_2(\Omega)} + c_2(\kappa,\varepsilon)\ \left( \|\partial_x\Phi\|_{L_2(\Omega)} + \|\partial_\eta\Phi\|_{L_2(\Omega)} \right) \right] \\
& \le  c_3(\kappa,\varepsilon)\ \left[ \|\Phi\|_{L_2(\Omega)} + \|F\|_{L_2(\Omega)} + 2 c_2(\kappa,\varepsilon)\ \left( \delta\ \|\Phi\|_{W_2^2(\Omega)} + \frac{1}{2\delta}\ \|\Phi\|_{L_2(\Omega)} \right) \right] \,
\end{align*} 
whence, after choosing $\delta$ sufficiently small,
\begin{equation}
\|\Phi\|_{W_2^2(\Omega)} \le c_4(\kappa,\varepsilon)\ \left( \|\Phi\|_{L_2(\Omega)} + \|F\|_{L_2(\Omega)} \right)\ . \label{y5}
\end{equation}
We finally prove \eqref{y3} and argue as in the proof of \cite[Lemma~9.17]{GilbargTrudinger}. Assume for contradiction that \eqref{y3} is not true. Then, for each $n\ge 1$, there are $v_n\in \overline{S}_q(\kappa)$, $\widetilde{\Phi}_n\in W_{2,D}^2(\Omega)$, $\widetilde{\Phi}_n\not\equiv 0$, and $\widetilde{F}_n\in L_2(\Omega)$ such that
\begin{equation}
- \mathcal{L}_{v_n} \widetilde{\Phi}_n  =\widetilde{F}_n\ \text{  in }\ \Omega \;\;\;\text{ and }\;\;\; \left\| \widetilde{\Phi}_n \right\|_{W_2^2(\Omega)} \ge n\ \left\| \widetilde{F}_n \right\|_{L_2(\Omega)}\ . \label{y10}
\end{equation}
Setting $\Phi_n := \widetilde{\Phi}_n/ \left\| \widetilde{\Phi}_n \right\|_{L_2(\Omega)}$ and $F_n := \widetilde{F}_n/ \left\| \widetilde{\Phi}_n \right\|_{L_2(\Omega)}$, we realize that \eqref{y5} and \eqref{y10} imply 
\begin{eqnarray}
- \mathcal{L}_{v_n} \Phi_n \!\! & = &\!\! F_n\ \text{  in }\ \Omega\ , \qquad \Phi_n\in W_{2,D}^2(\Omega)\ , \label{y11a}\\
\left\| \Phi_n \right\|_{L_2(\Omega)} \!\! & = &\!\! 1\ , \label{y11b} 
\end{eqnarray}
and
\begin{align*}
n \|F_n\|_{L_2(\Omega)} & \le  \|\Phi_n\|_{W_2^2(\Omega)} \le c_4(\kappa,\varepsilon)\ \left( \|\Phi_n\|_{L_2(\Omega)} + \|F_n\|_{L_2(\Omega)} \right)  {=}\,  c_4(\kappa,\varepsilon)\ \left( 1 + \|F_n\|_{L_2(\Omega)} \right)\ .
\end{align*}
Consequently, we have for $n\ge 2c_4(\kappa,\varepsilon)$,
\begin{equation}
n \|F_n\|_{L_2(\Omega)} \le 2 c_4(\kappa,\varepsilon) \;\;\;\text{ and }\;\;\; \|\Phi_n\|_{W_2^2(\Omega)} \le \left( 1 + \frac{2}{n} \right)\ c_4(\kappa,\varepsilon)\ . \label{y12}
\end{equation}
Since $W_2^2(\Omega)$ and $W_q^2(I)$ are compactly embedded in $W_2^1(\Omega)$ and $C^1([-1,1])$, respectively, we infer from \eqref{y12} and the boundedness of $\overline{S}_q(\kappa)$ in $W_q^2(I)$ that there are $(\Phi,v)\in W_{2,D}^2(\Omega)\times W_{q,D}^2(I)$ and a subsequence of $(\Phi_n,v_n)_n$ (not relabeled) such that 
\begin{eqnarray}
& & (\Phi_n,v_n) \rightharpoonup (\Phi,v) \;\;\text{ in }\;\; W_{2,D}^2(\Omega)\times W_{q,D}^2(I)\ , \label{y13} \\
& & (\Phi_n,v_n) \longrightarrow (\Phi,v) \;\;\text{ in }\;\; W_{2,D}^1(\Omega)\times C^1([-1,1])\ . \label{y14}
\end{eqnarray}
It readily follows from \eqref{y13} and \eqref{y14} that $v\in \overline{S}_q(\kappa)$ and
\begin{equation}
\left( a_{ij}(v_n) , b_i(v_n) \right) \longrightarrow \left( a_{ij}(v), b_i(v) \right) \;\;\text{ in }\;\; C([-1,1]) \label{y15}
\end{equation}
for all $1\le i,j\le 2$. Since $F_n\longrightarrow 0$ in $L_2(\Omega)$ by \eqref{y12}, the convergences \eqref{y13}, \eqref{y14}, and \eqref{y15} allow us to pass to the limit as $n\to\infty$ in the weak formulation of \eqref{y11a} and conclude that $\Phi\in W_{2,D}^2(\Omega)$ is a weak solution to $-\mathcal{L}_v\Phi = 0$ in $\Omega$. Using again \cite[Thm.~8.3]{GilbargTrudinger}, this implies $\Phi\equiv 0$ contradicting $\|\Phi\|_{L_2(\Omega)}=1$ as follows from \eqref{y11b} and \eqref{y14}.
\end{proof}

\begin{proof}[{\bf Proof of Proposition~\ref{L1}}]
For $v\in S_q(\kappa)$ and $(x,\eta)\in\Omega$, we set 
$$
f_v(x,\eta):=\mathcal{L}_v \eta = \varepsilon^2\ \eta \left[ 2\ \left( \frac{\partial_x v(x)}{1+v(x)} \right)^2 - \frac{\partial_x^2 v(x)}{1+v(x)} \right]\ .
$$
Since $q>2$, the function $f_v$ belongs to $L_2(\Omega)$ with 
\begin{equation}
\| f_v \|_{L_2(\Omega)} \le c_5(\kappa,\varepsilon)\ , \label{y19}
\end{equation}
and Lemma~\ref{L2} ensures that there is a unique solution $\Phi_v\in W_{2,D}^2(\Omega)$ to
\begin{eqnarray}
-\mathcal{L}_v\Phi_v  \!\!\!&=f_v\ &\text{in}\ \Omega\ ,\label{23a}\\
\Phi_v \!\!\!&=0\  &\text{on}\ \partial\Omega\ ,\label{24a}\ 
\end{eqnarray}
satisfying
\begin{equation}
\|\Phi_v\|_{W_2^2(\Omega)}\le c_1(\kappa,\varepsilon)\ {  \|f_v\|_{L_2(\Omega)}}\ . \label{gaston}
\end{equation}
Letting $\phi_v(x,\eta)=\Phi_v(x,\eta)+\eta$ for $(x,\eta) \in \overline{\Omega}$, the function $\phi_v$ obviously solves \eqref{230}-\eqref{240}, { and from \eqref{y19} and \eqref{gaston} we obtain}
\begin{equation}\label{c4}
\|\phi_v\|_{W_2^2(\Omega)}\le c_6(\kappa,\varepsilon)\ .
\end{equation}
In addition, if $v\in S_q(\kappa)$ and $\tilde{v}$ denotes the function defined by $\tilde{v}(x) := v(-x)$ for $x\in I$, we obviously have $\tilde{v}\in S_q(\kappa)$ and the properties of $\mathcal{L}_{\tilde{v}}$ ensure that $(x,\eta)\longmapsto \phi_v(-x,\eta)$ solves \eqref{23a}-\eqref{24a}  with $\tilde{v}$ instead of $v$. The uniqueness of the solution to \eqref{23a}-\eqref{24a} then readily implies that $\phi_{\tilde{v}}(x,\eta)= \phi_v(-x,\eta)$ for $(x,\eta)\in\Omega$ and thus that \begin{equation}
g_\ve(\tilde{v})(x) = g_\ve(v)(-x)\,, \qquad x\in I\ . \label{symmetry}
\end{equation}
Next, given $v\in S_q(\kappa)$, we define a bounded linear operator $\mathcal{A}(v)\in\mathcal{L}\big(W_{2,D}^2(\Omega), L_2(\Omega)\big)$ by
$$
\mathcal{A}(v)\Phi:=-\mathcal{L}_v\Phi\ ,\quad \Phi\in W_{2,D}^2(\Omega)\ .
$$
Lemma~\ref{L2} guarantees that $\mathcal{A}(v)$ is invertible with inverse $\mathcal{A}(v)^{-1} \in \mathcal{L}\big(L_2(\Omega),W_{2,D}^2(\Omega))$ satisfying
\begin{equation}
\left\| \mathcal{A}(v)^{-1} \right\|_{\mathcal{L}\big(L_2(\Omega),W_{2,D}^2(\Omega))} \le c_1(\kappa,\varepsilon)\ . \label{y25}
\end{equation}
We then note that
\begin{equation}
\|\mathcal{A}(v_1)-\mathcal{A}(v_2)\|_{\mathcal{L}(W_{2,D}^2(\Omega),L_2(\Omega))}\le c_7(\kappa,\varepsilon)\ \|v_1-v_2\|_{W_q^2(I)}\ ,\quad v_1, v_2\in S_q(\kappa)\ ,\label{y26}
\end{equation}
which follows from the definition of $\mathcal{L}_v $ and the continuity of pointwise multiplication
$$
W_q^1(I)\cdot W_q^1(I)\hookrightarrow W_q^1(I) \hookrightarrow L_\infty(I)
$$
except for the terms involving $\partial_x^2 v_i$, $i=1,2$, where continuity of pointwise multiplication
$$
L_q(\Omega)\cdot W_2^1(\Omega)\hookrightarrow L_2(\Omega)
$$
is used. Now, for $v_1, v_2\in S_q(\kappa)$, we infer from \eqref{y25} and \eqref{y26} that
\begin{align*}
 \|\mathcal{A}(v_1)^{-1}&   -\mathcal{A}(v_2)^{-1}\|_{\mathcal{L}(L_2(\Omega),W_{2,D}^2(\Omega))} \\ 
\le & \left\| \mathcal{A}(v_1)^{-1} \right\|_{\mathcal{L}(L_2(\Omega),W_{2,D}^2(\Omega))}\ \left\| \mathcal{A}(v_2) - \mathcal{A}(v_1) \right\|_{\mathcal{L}(W_{2,D}^2(\Omega),L_2(\Omega))}\ \left\| \mathcal{A}(v_2)^{-1} \right\|_{\mathcal{L}(L_2(\Omega),W_{2,D}^2(\Omega))} \\ 
\le & c_1(\kappa,\varepsilon)^2\ c_7(\kappa,\varepsilon)\ \|v_1-v_2\|_{W_q^2({I})}\ ,
\end{align*}
which, combined with \eqref{y19}, the observation that $0\in S_q(\kappa)$ and
$$
\|f_{v_1}-f_{v_2}\|_{L_2(\Omega)}\le  c_8(\kappa,\varepsilon)\ \|v_1-v_2\|_{W_q^2({I})}\ ,\quad v_1, v_2\in S_q(\kappa)\ ,
$$
ensures that 
\begin{align}
\|\phi_{v_1}-\phi_{v_2}\|_{W_{2}^2(\Omega)} =\ & \|\Phi_{v_1}-\Phi_{v_2}\|_{W_{2}^2(\Omega)} = \|\mathcal{A}(v_1)^{-1} f_{v_1} - \mathcal{A}(v_2)^{-1} f_{v_2}\|_{W_{2,D}^2(\Omega)} \nonumber \\
\le\ & c_1(\kappa,\varepsilon)^2\ c_7(\kappa,\varepsilon)\ \|v_1-v_2\|_{W_q^2(I)}\ \|f_{v_1} \|_{L_2(\Omega)} \nonumber\\
& +\ c_8(\kappa,\varepsilon)\ \|v_1-v_2\|_{W_q^2(I)}\ \|\mathcal{A}(v_2)^{-1}\|_{\mathcal{L}(L_2(\Omega),W_{2,D}^2(\Omega))}\nonumber\\
\le\ & c_9(\kappa,\varepsilon)\ \|v_1-v_2\|_{W_q^2(I)}\label{RR}
\end{align}
for $v_1, v_2 \in S_q(\kappa)$. We may then invoke \cite[Chapt.~2, Thm.~5.4]{Necas67} and the continuity of pointwise multiplication
$$
W_2^{1/2}(I)\cdot W_2^{1/2}(I)\hookrightarrow W_2^{2\sigma_1}(I)
$$
for $2\sigma_1< 1/2$ according to \cite[Thm.~4.1]{AmannMultiplication} to conclude that the mapping
\begin{equation}
S_q(\kappa)\rightarrow W_2^{2\sigma_1}(I)\ ,\quad v\mapsto \big\vert\partial_\eta \phi_v(\cdot,1)\big\vert^2 \label{c12a}
\end{equation}
is globally Lipschitz continuous. { Thanks} to the continuity of the embedding of $W_q^2(I)$ in $W_\infty^1(I)$, the mapping
\begin{equation}
S_q(\kappa)\rightarrow W_q^1(I)\ ,\quad v\mapsto \frac{1+\varepsilon^2(\partial_x v)^2}{(1+v)^2} \label{c12b}
\end{equation}
is globally Lipschitz continuous with a Lipschitz constant depending only on $\kappa$ and $\varepsilon$, and the Lipschitz continuity of $g_\ve$ stated in Proposition~\ref{L1} follows at once from \eqref{c12a}, \eqref{c12b}, and continuity of pointwise multiplication
$$
W_q^{1}(I)\cdot W_2^{2\sigma_1}(I)\hookrightarrow W_2^{2\sigma}(I)=W_{2,D}^{2\sigma}(I)\ ,
$$
where $2\sigma<2\sigma_1<1/2$, see again \cite[Thm.~4.1]{AmannMultiplication}. {Finally, to prove that $g_\ve$ is analytic, we note that $S_q(\kappa)$ is open in $W_{q,D}^2(I)$ and that the mappings $\mathcal{A}: S_q(\kappa)\rightarrow\mathcal{L}(W_{2,D}^2(\Omega),L_2(\Omega))$ and $[v\mapsto f_v]: S_q(\kappa)\rightarrow L_2(\Omega)$ are analytic. The analyticity of the inversion map $\ell\mapsto\ell^{-1}$ for bounded operators implies that also the mapping $[v\mapsto \phi_v]:S_q(\kappa)\rightarrow W_2^2(\Omega)$ is analytic, and the assertion follows as above from the results on pointwise multiplication .}
\end{proof}

\medskip

Let $p\in (1,\infty)$. We define $A_p\in \mathcal{L}(W_{p,D}^2(I),L_p(I))$ by $A_pv:=-\partial_x^2 v$ for \mbox{$v\in W_{p,D}^2(I)$}. Since $A_r\subset A_p$ for $r\ge p$, we suppress the subscript in the following and write $A:=A_p$.
Note then that \eqref{33} subject to the boundary condition \eqref{bcu} and the initial condition \eqref{ic} may be recast as an abstract parameter-dependent Cauchy problem
\bqn\label{CP}
\dot{u}+Au=-\lambda g_\ve(u)\ ,\quad t>0\ ,\qquad u(0)=u^0\ ,
\eqn
where we recall that the function $g_\ve$ was defined {in} Proposition~\ref{L1}. To prove Theorem~\ref{A} it then suffices to focus on \eqref{CP}. {For that purpose}, let $\{e^{-tA}\,;\, t\ge 0\}$ denote the heat semigroup on $L_p(I)$ corresponding to $-A$.
In order to state suitable regularizing properties we recall that we have set
$ W_{p,D}^{2\beta}(I)=W_{p}^{2\beta}(I)$ for $2\beta\in (0,1/p)$ and $ W_{p,D}^{2\beta}(I)=\{u\in  W_{p}^{2\beta}(I)\,;\, u(\pm 1)=0\}$ for $2\beta\in (1/p,2]$. Then we have:

\begin{lem}\label{regprop}
Let $1<p\le r<\infty$. There exists $\omega>0$ such that the following hold.
\begin{itemize}
\item[(i)] If $0\le \alpha\le\beta\le 1$ with $2\alpha, 2\beta\not= 1/p$, then
$$
\|e^{-tA}\|_{\mathcal{L}(W_{p,D}^{2\alpha}(I),W_{p,D}^{2\beta}(I))}\le M e^{-\omega t} t^{\alpha-\beta}\ ,\quad t>0\ ,
$$
for some number $M\ge 1$ depending on $p$, $\alpha$, and $\beta$.\\

\item[(ii)] If $0\le \alpha\le \beta\le 1$ with $2\alpha\not=1/p$ and $2\beta\not= 1/r$, then
$$
\|e^{-tA}\|_{\mathcal{L}(W_{p,D}^{2\alpha}(I),W_{r,D}^{2\beta}(I))}\le M e^{-\omega t} t^{\alpha-\beta-\frac{1}{2}(\frac{1}{p}-\frac{1}{r})}\ ,\quad t>0\ ,
$$
for some number $M\ge 1$ depending on $p$, $r$, $\alpha$, and $\beta$.
\end{itemize}
\end{lem}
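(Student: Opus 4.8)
The plan is to reduce everything to well-known smoothing estimates for the Dirichlet Laplacian on the interval $I$, interpolated between $L_p$-scales. First I would recall that $-A = \partial_x^2$ with homogeneous Dirichlet boundary conditions generates an analytic semigroup on $L_p(I)$ that is exponentially decaying: the spectrum of $A$ lies in $[\pi^2/4,\infty)$, so there is $\omega>0$ (in fact any $\omega<\pi^2/4$) and $M_0\ge 1$ with $\|e^{-tA}\|_{\mathcal{L}(L_p(I))}\le M_0 e^{-\omega t}$ for $t\ge 0$, and moreover $\|A^\gamma e^{-tA}\|_{\mathcal{L}(L_p(I))}\le M_\gamma e^{-\omega t} t^{-\gamma}$ for every $\gamma\ge 0$ and $t>0$, by the standard analytic-semigroup estimate combined with the spectral shift (one writes $e^{-tA}=e^{-\omega t}e^{-t(A-\omega)}$ and uses that $A-\omega$ still generates an analytic semigroup bounded on the right half plane up to the decay). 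This is the only place where anything beyond bookkeeping is needed, and it is entirely classical.

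Next, for part (i) I would identify the interpolation spaces. The key fact is that for $2\beta\in[0,2]\setminus\{1/p\}$ the domain $D(A^\beta)$ coincides, up to equivalent norms, with $W_{p,D}^{2\beta}(I)$ — this is precisely the characterization of fractional powers of the Dirichlet Laplacian on an interval (see e.g. the Amann or Triebel references; it can also be obtained from the fact that complex interpolation between $L_p(I)=D(A^0)$ and $D(A)=W_{p,D}^2(I)$ gives these spaces, with the boundary condition appearing only when $2\beta>1/p$). Granting this, for $0\le\alpha\le\beta\le1$ one writes, for $v\in W_{p,D}^{2\alpha}(I)\cong D(A^\alpha)$,
\begin{equation*}
\|e^{-tA}v\|_{W_{p,D}^{2\beta}(I)}\lesssim \|A^\beta e^{-tA}v\|_{L_p(I)} = \|A^{\beta-\alpha}e^{-tA}\,A^\alpha v\|_{L_p(I)}\le M_{\beta-\alpha}e^{-\omega t}t^{\alpha-\beta}\|A^\alpha v\|_{L_p(I)}\lesssim e^{-\omega t}t^{\alpha-\beta}\|v\|_{W_{p,D}^{2\alpha}(I)},
\end{equation*}
which is the claimed bound after absorbing the norm-equivalence constants into $M$.

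For part (ii) I would insert one Sobolev embedding to bridge the two integrability exponents. Pick an intermediate smoothness $2\gamma$ with $2\gamma = 2\beta + (1/p - 1/r)$, which by $p\le r$ satisfies $2\gamma\ge 2\beta$ and, shrinking $\tau$-times if necessary, lies in $(0,2]\setminus\{1/p\}$; the one-dimensional Sobolev embedding gives the continuous inclusion $W_{p,D}^{2\gamma}(I)\hookrightarrow W_{r,D}^{2\beta}(I)$ (the compatibility of the Dirichlet trace constraint across the two scales is automatic since $2\gamma\ge 2\beta$). Then factor $e^{-tA}=e^{-tA/2}e^{-tA/2}$, apply part (i) on the first factor to gain regularity from $W_{p,D}^{2\alpha}$ to $W_{p,D}^{2\gamma}$ at the cost of $(t/2)^{\alpha-\gamma}$, feed the result through the embedding, and note $(t/2)^{\alpha-\gamma}=2^{\gamma-\alpha}t^{\alpha-\beta-\frac12(1/p-1/r)}$, absorbing constants. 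Combining with the $e^{-\omega t/2}\le e^{-\omega t/2}$ decay (and relabeling $\omega$) yields the stated estimate. In the borderline case $2\beta=1/r$ one replaces it by some $2\beta'\in(2\beta,2)$ away from $1/p$, applies the above, and uses $W_{r,D}^{2\beta'}(I)\hookrightarrow W_{r,D}^{2\beta}(I)$ at the end, with a harmless change in the power of $t$ — but since the statement already excludes $2\beta=1/r$, this is only a remark.

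The main obstacle — really the only non-formal point — is citing the identification $D(A^\beta)\cong W_{p,D}^{2\beta}(I)$ with the correct behavior of the boundary condition at the threshold $2\beta=1/p$, together with the exponential decay of the analytic semigroup; once those two standard facts are in place, both estimates are a two-line application of the moment inequality $\|A^{\beta-\alpha}e^{-tA}\|_{\mathcal{L}(L_p)}\le Ce^{-\omega t}t^{\alpha-\beta}$ plus a single Sobolev embedding. I would therefore present the proof as: (1) recall semigroup decay and smoothing on $L_p$; (2) recall the fractional-power/Sobolev identification; (3) prove (i) by the displayed chain; (4) prove (ii) by the semigroup splitting plus embedding.
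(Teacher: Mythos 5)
Your overall architecture---exponential decay plus parabolic smoothing on an interpolation scale, followed by a Sobolev embedding---matches the paper's, but there are two gaps. The lesser one is in (i): the identification $D(A^\beta)\cong W_{p,D}^{2\beta}(I)$ is not correct for $p\neq 2$ and $2\beta\notin\{0,1,2\}$. Fractional powers and complex interpolation produce the Bessel potential spaces $H_{p}^{2\beta}$, whereas $W_p^{2\beta}(I)$ is the Sobolev--Slobodeckii space $B_{p,p}^{2\beta}(I)$ obtained by \emph{real} interpolation; these differ for $p\neq 2$, and the inclusion you need on the source side, $W_{p,D}^{2\alpha}(I)\hookrightarrow D(A^\alpha)$, fails for $p>2$ and noninteger $2\alpha$. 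The paper avoids this by applying Amann's semigroup estimate directly on the scale $E_\theta=(L_p(I),W_{p,D}^2(I))_{\theta,p}$ (real interpolation of the two endpoint operator bounds, with the complex functor used only at $2\theta=1$) and then citing Grisvard and Seeley for $E_\theta=W_{p,D}^{2\theta}(I)$, $2\theta\neq 1/p$. Your part (i) becomes correct once the fractional-power step is replaced by this interpolation-of-operators step.

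The serious gap is in (ii). Your intermediate exponent $2\gamma=2\beta+(1/p-1/r)$ exceeds $2$ whenever $2\beta>2-(1/p-1/r)$, in particular for $\beta=1$ and $p<r$, which is exactly the case in which the lemma is applied later (see \eqref{9}, where $p=2$, $r=q$, $\beta=1$). Part (i) is only available for target regularity at most $2$ (the scale stops at $D(A)=W_{p,D}^2(I)$), and ``shrinking'' $2\gamma$ necessarily changes the power of $t$, so the argument as written does not yield the stated exponent in this range. The paper's route uses \emph{both} halves of $e^{-tA}=e^{-tA/2}e^{-tA/2}$: smooth from $W_{p,D}^{2\alpha}(I)$ all the way up to $W_{p,D}^{2}(I)$ at cost $(t/2)^{\alpha-1}$, embed $W_{p,D}^{2}(I)\hookrightarrow W_{r,D}^{2\theta}(I)$ with $2\theta=2-(1/p-1/r)$, and then run the second half of the semigroup in the $L_r$-scale from $W_{r,D}^{2\theta}(I)$ to $W_{r,D}^{2\beta}(I)$ at cost $(t/2)^{\theta-\beta}$; the exponents add up to $\alpha-\beta-\tfrac{1}{2}(\tfrac{1}{p}-\tfrac{1}{r})$. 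Note that this factorization requires $\beta\ge\theta$, while your single-embedding version requires $\beta\le\theta$; a proof valid for all $\beta\in[0,1]$ should combine the two cases.
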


\begin{proof}[{\bf Proof}]
(i) Since $-A\in \mathcal{L}(W_{p,D}^2(I),L_p(I))$ is the generator of the analytic semigroup $\{e^{-tA}\,;\, t\ge 0\}$ on $L_p(I)$ with a negative spectral bound, it follows from \cite[{Chapt.~V,} Thm.2.1.3]{LQPP} that there are $\omega>0$ and $M\ge 1$ such that
$$
\|e^{-tA}\|_{\mathcal{L}(E_\alpha,E_\beta)}\le M e^{-\omega t} t^{\alpha-\beta}\ ,\quad t>0\ ,
$$
where, for $\theta\in [0,1]$, $E_\theta:=(L_p(I),W_{p,D}^{2}(I))_{\theta}$ with $(\cdot,\cdot)_{\theta}$ chosen as real interpolation functor $(\cdot,\cdot)_{\theta,p}$ if $2\theta\not= 1$ and as complex interpolation functor $[\cdot,\cdot]_{1/2}$ if $2\theta=1$. Since $E_\theta = W_{p,D}^{2\theta}(I)$ with equivalent norms for $2\theta\in [0,2]\setminus\{1/p\}$ by \cite{Grisvard69, Seeley}, assertion~(i) follows.\\

(ii) From Sobolev's embedding we have $W_{p,D}^{2}(I)\hookrightarrow W_{r,D}^{2\theta}(I)$ for $2\theta=2-(1/p-1/r)\not= 1/r$, whence
$$
\|e^{-tA}\|_{\mathcal{L}(W_{p,D}^{2\alpha}(I),W_{r,D}^{2\beta}(I))} \,\le\, c\, \|e^{-\frac{t}{2}A}\|_{\mathcal{L}(W_{r,D}^{2\theta}(I),W_{r,D}^{2\beta}(I))}\, \|e^{-\frac{t}{2}A}\|_{\mathcal{L}(W_{p,D}^{2\alpha}(I),W_{p,D}^{2}(I))}\ ,\quad t>0\ ,
$$
and so assertion (ii) follows from (i).
\end{proof}

We are now in a position to prove Theorem~\ref{A} and Theorem~\ref{TD}(i).

\begin{proof}[{\bf Proof of Theorem~\ref{A} and Theorem~\ref{TD}(i)}]
Let $\lambda> 0$, $q\in (2,\infty)$, $\varepsilon>0$, and consider $u^0\in  W_{q,D}^2(I)$  with $u^0(x)>-1$ for $x\in I$. Clearly, there is $\kappa\in (0,1/2)$ such that 
\bqn\label{44}
u^0\in { \overline{S}_q(2\kappa)}\ .
\eqn
We now fix $\frac{1}{2}-\frac{1}{q}<2\sigma<\frac{1}{2}$  with $2\sigma\ne 1/q$ and put $\kappa_0:=\kappa/M$, where $M\ge 1$ is such that
\bqn\label{9}
\|e^{-tA}\|_{\mathcal{L}(W_{q,D}^{2}(I))}+ t^{-\sigma+1+\frac{1}{2}(\frac{1}{2}-\frac{1}{q})} \|e^{-tA}\|_{\mathcal{L}(W_{2,D}^{2\sigma}(I),W_{q,D}^{2}(I))}\le M e^{-\omega t} \ ,\quad t\ge 0\ 
\eqn
with $\omega>0$ according to Lemma~\ref{regprop}. By Proposition~\ref{L1} there is $c_{10}(\kappa,\varepsilon)>0$ such that
\bqn\label{10}
\|g_\ve(v_1)-g_\ve(v_2)\|_{W_{2,D}^{2\sigma}(I)}\le c_{10}(\kappa,\varepsilon)\ \|v_1-v_2\|_{W_{q,D}^{2}(I)}\ ,\quad v_1, v_2\in { \overline{S}_q}(\kappa_0)\ .
\eqn
Since $0\in { \overline{S}_q}(\kappa_0)$ and $g_\ve(0)=1$, we deduce from \eqref{10} that
\bqn\label{10a}
\|g_\ve(v)\|_{W_{2,D}^{2\sigma}(I)}\le 1 + \frac{c_{10}(\kappa,\varepsilon)}{\kappa_0} = c_{11}(\kappa,\varepsilon)\ ,\quad v\in { \overline{S}_q}(\kappa_0)\ .
\eqn
Now, for $\tau>0$, define $\mathcal{V}_\tau:= C([0,\tau], { \overline{S}_q}(\kappa_0))$ and
$$
F(v)(t):=e^{-tA}u^0-\lambda\int_0^t e^{-(t-s)A} g_\ve\big(v(s)\big)\,\rd s
$$
for $0\le t\le \tau$ and $v\in\mathcal{V}_\tau$. Consider $v_1, v_2\in \mathcal{V}_\tau$ and $t\in [0,\tau]$. Then, introducing
\begin{equation}
\mathcal{I}(\tau) := \int_0^\tau e^{-\omega s}\ s^{\sigma-1-\frac{1}{2}(\frac{1}{2}-\frac{1}{q})}\,\rd s \le  \mathcal{I}(\infty) := \int_0^\infty e^{-\omega s}\ s^{\sigma-1-\frac{1}{2}(\frac{1}{2}-\frac{1}{q})}\,\rd s \ , \label{volvic}
\end{equation}
which is finite thanks to the positivity of $\omega$ and the choice of $\sigma$, it readily follows from \eqref{44}, \eqref{9}, and \eqref{10a} that 
\begin{align}
\|F(v_1)(t)\|_{W_{q,D}^{2}(I)} & \le  M\ \|u^0\|_{W_{q,D}^2(I)} \nonumber 
+ \lambda\ M\ \int_0^t e^{-\omega(t-s)}\ (t-s)^{\sigma-1-\frac{1}{2}(\frac{1}{2}-\frac{1}{q})}\ \|g_\ve(v_1(s))\|_{W_{2,D}^{2\sigma}(I)}\,\rd s \nonumber \\
&\le  \frac{M}{2\kappa}+\lambda\ M\ c_{11}(\kappa,\varepsilon)\ \mathcal{I}(\tau)\ ,\label{11}
\end{align}
and from \eqref{9} and \eqref{10} that
\bqn\label{12}
\|F(v_1)(t)-F(v_2)(t)\|_{W_{q,D}^{2}(I)}\le \lambda\ M\ c_{10}(\kappa,\varepsilon)\ \mathcal{I}(\tau)\, \|v_1-v_2\|_{C([0,\tau], W_{q,D}^2(I))}\ .
\eqn
Moreover, since $W_{q,D}^{2}(I)$ embeds in $L_\infty(I)$ with embedding constant 2 and $u^0\ge 2\kappa-1$, we deduce from the positivity of the heat semigroup, \eqref{9}, and \eqref{10a} that
\begin{align}
F(v_1)(t)\ge & -1 + 2\kappa - 2\,\lambda\,  \int_0^t \left\| e^{-(t-s)A}\ g_\ve(v_1(s)) \right\|_{W_{q,D}^2(I)}\, \rd s \nonumber \\
\ge & -1+2\kappa -  2\,\lambda\, M \ \int_0^t e^{-\omega (t-s)}\ (t-s)^{\sigma-1-\frac{1}{2}(\frac{1}{2}-\frac{1}{q})}\ \|g_\ve(v_1(s))\|_{W_{2,D}^{2\sigma}(I)}\,\rd s \nonumber\\
\ge & -1+2\kappa -  2\,\lambda\,  M\ c_{11}(\kappa,\varepsilon)\ \mathcal{I}(\tau)\ .\label{13}
\end{align}
We finally note that $F(v_1)(t)\le 0$ if $u^0\le 0$ since $g_\ve(v_1)\ge 0$. Consequently, due to \eqref{11}-\eqref{13} and the fact that $\mathcal{I}(\tau)\to 0$ as $\tau\to 0$, there is $\tau_0:=\tau_0(\lambda,\kappa,\varepsilon,q,\sigma)>0$ sufficiently small such that $F$ defines a contraction from $\mathcal{V}_{\tau_0}$ into itself. This shows that there is a unique maximal solution 
$$
u\in C^1\big([0,T_m^\varepsilon),L_q(I)\big)\cap C\big([0,T_m^\varepsilon), W_{q,D}^2(I)\big)\cap C\big((0,T_m^\varepsilon),W_{2,D}^{2+2\sigma}(I)\big)
$$
to \eqref{CP} for some $T_m^\varepsilon\in (\tau_0,\infty]$, satisfying
$$
u(t,x)>-1\ ,\quad (t,x)\in [0,T_m^\varepsilon)\times I\ ,
$$
and, in addition, $$u(t,x)\le 0\ ,\quad(t,x)\in [0,T_m^\varepsilon)\times I\quad\text{if}\quad u^0(x)\le 0\ ,\quad x\in I\ .
$$
Moreover, if for each $\tau>0$ there is $\kappa(\tau)\in (0,1)$ such that $u(t)\in { \overline{S}_q}(\kappa(\tau))$ for $t\in [0,T_m^\varepsilon) \cap [0,\tau]$, then necessarily $T_m^\varepsilon=\infty$. This proves the statements (i) and (ii) of Theorem~\ref{A} after observing that $\psi(t):=\phi_{u(t)}\circ T_{u(t)}$ belongs to $W_q^2\big(\Omega(u(t))\big)$ and solves \eqref{psi}-\eqref{bcpsi} for each $t\in [0,T_m^\varepsilon)$, where the transformation $T_{u}$ was introduced in \eqref{Tu}.

As for the statement (i) of Theorem~\ref{TD}, we choose $\lambda_*:=\lambda_*(\kappa,\varepsilon,q,\sigma)>0$ such that (recall \eqref{volvic})
$$
\lambda_*\, M\, \max{\{ c_{10}(\kappa,\varepsilon) , c_{11}(\kappa,\varepsilon)\}}\, \mathcal{I}(\infty) \le \frac{1}{2} < \frac{1}{2\kappa_0} 
$$
and
$$  
2\,\lambda_*\, M\, c_{11}(\kappa,\varepsilon)\ \mathcal{I}(\infty) \le \kappa_0\ .
$$
Letting $\lambda\le \lambda_*$, it readily follows that, for each $\tau>0$, the mapping $F$ defines a contraction from $C([0,\tau], { \overline{S}_q}(\kappa_0))$ into itself. This implies that $T_m^\varepsilon=\infty$ in this case and that $u(t)\in { \overline{S}_q}(\kappa_0)$ for~$t\ge 0$.

To prove statement (iv) of Theorem~\ref{A} suppose that $u^0$ is even on $I$ and let $u$ be the corresponding maximal solution to \eqref{CP} with maximal existence time $T_m^\varepsilon\in (0,\infty]$. Introducing the function $\tilde{u}$ defined by $\tilde{u}(t,x)= u(t,-x)$ for $(t,x)\in [0,T_m^\varepsilon)\times I$, we deduce from Proposition~\ref{L1} and the evenness of $u^0$ that $\tilde{u}$ also solves \eqref{CP}, so that $\tilde{u}$ actually coincides with $u$. Thus $u(t,.)$ is even on $I$ for all $t\in [0,T_m^\varepsilon)$ and the proof of Theorem~\ref{A} is complete. 
\end{proof}

{We end this section with some useful properties of the component $\psi$ of solutions to \eqref{u}-\eqref{bcpsi}.

\begin{prop}\label{pr.psi}
Let $q\in (2,\infty)$, $\varepsilon>0$, $\lambda>0$, and consider an initial value $u^0\in  W_{q,D}^2(I)$ such that $u^0(x)>-1$ for $x\in I$. Denoting the corresponding maximal solution to \eqref{u}-\eqref{bcpsi} by $(u,\psi)$, we have for all $t\in [0,T_m^\varepsilon)$
\begin{align}
1+z - \max{\big( u^0 \big)_+} \le &\, \psi(t,x,z) \le 1\ , \qquad (x,z)\in \Omega(u(t))\ , \label{ppsi1} \\
\partial_z \psi(t,x,u(t,x)) \ge &\, 0\ , \qquad x\in I\ , \label{ppsi2} \\
\partial_x \psi(t,x,u(t,x))= & - \partial_x u(t,x)\ \partial_z \psi(t,x,u(t,x)) \ , \qquad x\in I\ . \label{ppsi3}
\end{align}
\end{prop}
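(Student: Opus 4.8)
The plan is to establish each of the three assertions \eqref{ppsi1}, \eqref{ppsi2}, \eqref{ppsi3} by working with the potential $\psi(t) = \phi_{u(t)}\circ T_{u(t)}$ on the physical domain $\Omega(u(t))$, using that $\psi(t)$ is harmonic there (it solves \eqref{psi}) and has boundary values $\psi(t,x,z) = (1+z)/(1+u(t,x))$ on $\partial\Omega(u(t))$ by \eqref{bcpsi}. Since $q > 2$, Theorem~\ref{A} guarantees $\psi(t)\in W_q^2(\Omega(u(t)))$, which by Sobolev embedding is continuous up to the boundary, so the maximum principle applies classically. For the upper bound in \eqref{ppsi1}, I observe that on the bottom plate $z=-1$ we have $\psi = 0$, on the lateral sides $x = \pm 1$ we have $u(t,\pm 1) = 0$ so $\psi(t,\pm 1,z) = 1+z \le 1$, and on the membrane $z = u(t,x)$ we have $\psi = 1$; hence $\psi \le 1$ on all of $\partial\Omega(u(t))$ and the maximum principle for the equation $\varepsilon^2\partial_x^2\psi + \partial_z^2\psi = 0$ gives $\psi(t,x,z)\le 1$ inside. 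For the lower bound I compare $\psi(t)$ with the affine function $(x,z)\mapsto 1+z - \max(u^0)_+$: by Theorem~\ref{A}(iii)-type monotonicity (more precisely since $u$ is the solution and $u^0\le \max(u^0)_+$, one has $u(t,x)\le \max(u^0)_+$, which actually needs a comparison argument; alternatively one uses that on the membrane $z = u(t,x)\le \max(u^0)_+$ so $1 + z - \max(u^0)_+ \le 1 = \psi$ there, on the bottom $1 + (-1) - \max(u^0)_+ \le 0 = \psi$, and on the sides $1 + z - \max(u^0)_+ \le 1 + z = \psi$). Since $1+z-\max(u^0)_+$ is itself a solution of the same (constant-coefficient) elliptic equation, the maximum principle applied to the difference yields the lower bound in \eqref{ppsi1}.

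For \eqref{ppsi2}, the idea is Hopf's boundary point lemma. By \eqref{ppsi1}, $\psi(t,x,z)\le 1 = \psi(t,x,u(t,x))$ for every interior point, so $\psi(t)$ attains its maximum over $\overline{\Omega(u(t))}$ at every point of the membrane $\{z = u(t,x)\}$. Applying Hopf's lemma at such a boundary point (the domain satisfies an interior ball condition away from the corners $x=\pm1$, since $u(t)\in W_q^2(I)\hookrightarrow C^1(\bar I)$), the outward normal derivative of $\psi(t)$ at the membrane is strictly positive; translating this into the $z$-direction, which has a nonnegative component along the outward normal, gives $\partial_z\psi(t,x,u(t,x))\ge 0$ (with equality only possibly allowed in degenerate situations, but $\ge 0$ is all that is claimed). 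At the corner points $x=\pm 1$ one can argue by continuity of $\partial_z\psi(t,\cdot,u(t,\cdot))$, which holds because $\psi(t)\in W_q^2$ with $q>2$ so its gradient is Hölder continuous up to the closure.

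Assertion \eqref{ppsi3} is the cleanest: it is simply the statement that the tangential derivative of $\psi(t)$ along the membrane curve $x\mapsto (x,u(t,x))$ vanishes, because $\psi(t,x,u(t,x)) = 1$ is constant along that curve by \eqref{bcpsi}. Differentiating the identity $\psi(t,x,u(t,x)) = 1$ with respect to $x$ via the chain rule gives $\partial_x\psi(t,x,u(t,x)) + \partial_x u(t,x)\,\partial_z\psi(t,x,u(t,x)) = 0$, which is exactly \eqref{ppsi3}; this is the same elementary observation already used to pass from \eqref{u} to \eqref{33}. The chain rule is justified since $\psi(t)\in W_q^2(\Omega(u(t)))$ with $q>2$ has a $C^1$ representative up to the boundary, and $u(t)\in W_q^2(I)$ is likewise $C^1$.

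The main obstacle I anticipate is not any single inequality but making the maximum-principle and Hopf-lemma arguments rigorous on the time-dependent, only-$W_q^2$-regular domain $\Omega(u(t))$, whose boundary has corners at $(\pm 1, 0)$. The cleanest route is to transport everything to the fixed rectangle $\Omega = I\times(0,1)$ via $T_{u(t)}$ and work with $\phi_{u(t)} = \psi(t)\circ T_{u(t)}^{-1}\in W_2^2(\Omega)$ solving $\mathcal{L}_{u(t)}\phi_{u(t)} = 0$ with $\phi_{u(t)} = \eta$ on $\partial\Omega$: then \eqref{ppsi1} becomes $1 - \max(u^0)_+ \le \phi_{u(t)}(x,\eta)\le 1$... which is not quite the right transported statement, so some care is needed with the change of variables. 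Alternatively one notes $\mathcal{L}_{u(t)}$ is a uniformly elliptic operator on the fixed smooth-enough rectangle with continuous (indeed $W_q^1\hookrightarrow C$) coefficients, so the weak maximum principle and Hopf's lemma for operators in divergence form with bounded measurable coefficients (as in \cite[Chapt.~8]{GilbargTrudinger}) apply directly to $\phi_{u(t)} - \eta = \Phi_{u(t)}$, and then one reads off the statements for $\psi$ by undoing $T_{u(t)}$, taking advantage of the fact that $T_{u(t)}$ maps the membrane $\{z=u(t,x)\}$ to $\{\eta = 1\}$ and preserves the sign structure of normal derivatives up to a positive factor $1/(1+u(t,x))$. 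Once this transfer is set up, each of the three claims follows in a few lines.
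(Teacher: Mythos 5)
Your proposal is correct and follows essentially the same route as the paper: the constant $1$ and the affine function $(x,z)\mapsto 1+z-\max(u^0)_+$ serve as super- and subsolutions of \eqref{psi}--\eqref{bcpsi} (the latter requiring the parabolic comparison $u\le\max(u^0)_+$, which you correctly flag and which holds simply because the right-hand side of \eqref{u} is nonpositive, making that constant a supersolution of \eqref{u}--\eqref{ic}), and \eqref{ppsi3} is just the chain rule applied to the constant boundary value. The only real difference is that for \eqref{ppsi2} the paper avoids Hopf's lemma entirely: since $\psi(t)\le 1=\psi(t,x,u(t,x))$ on $\Omega(u(t))$, the one-sided difference quotient in $z$ at the membrane is nonnegative, which gives $\partial_z\psi(t,x,u(t,x))\ge 0$ directly and sidesteps the interior-ball and corner issues you spend effort worrying about.
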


\begin{proof}[{\bf Proof}]
It readily follows from the positivity of $\lambda$ that the constant $\max{\left( u_0 \right)_+}$ is a supersolution to \eqref{u}-\eqref{ic}, so that $u\le \max{\left( u_0 \right)_+}$ in $[0,T_m^\varepsilon)\times [-1,1]$. This property entails that $(x,z)\longmapsto 1+z- \max{\left( u^0 \right)_+}$ is a subsolution to \eqref{psi}-\eqref{bcpsi} and the comparison principle gives the lower bound in \eqref{ppsi1}. The upper bound in \eqref{ppsi1} also follows from the comparison principle as the constant $1$ is clearly a supersolution to \eqref{psi}-\eqref{bcpsi}. This implies in particular that $\psi(t)$ reaches its maximum value on the graph of $u(t)$ and thus that \eqref{ppsi2} holds true. Finally, \eqref{ppsi3} is an obvious consequence of \eqref{bcpsi}.
\end{proof}


{
\section{On Nonexistence of Global Solutions: Proof of Theorem~\ref{TD}($\mathrm{ii}$)}\label{Sec3a}

We now prove that there are no global solutions for large $\lambda$ values as stated in Theorem~\ref{TD}(ii) (note that part (i) of this theorem was shown in the previous section). For this we first need some preparations. Let $q\in (2,\infty)$, $\varepsilon>0$, $\lambda>0$, and consider an initial value $u^0\in  W_{q,D}^2(I)$ such that $-1<u^0(x)\le 0$ for $x\in I$. By Theorem~\ref{A}, there is a unique solution $(u,\psi)$ to \eqref{u}-\eqref{bcpsi} defined on the maximal interval of existence $[0,T_m^\varepsilon)$ for some $T_m^\varepsilon\in (0,\infty]$ and satysfying
$$
u\in C^1\big([0,T_m^\varepsilon),L_q(I)\big)\cap C\big([0,T_m^\varepsilon), W_{q,D}^2(I)\big)
$$
together with
\begin{equation}
-1<u(t,x)\le 0\ ,\quad (t,x)\in [0,T_m^\varepsilon)\times I\ , \label{n2}
\end{equation}
and $\psi(t)\in W_q^2\big(\Omega(u(t))\big)$ solves \eqref{psi}-\eqref{bcpsi} on $\Omega(u(t))$ for each $t\in [0,T_m^\varepsilon)$. Our aim is to show that, if $\lambda$ is sufficiently large, the maximal existence time $T_m^\varepsilon$ is finite. To this end, define $\zeta_1(x):= \pi \cos{(\pi x/2)}/4$ for $x\in [-1,1]$ and $\mu_1:=\pi^ 2/4$. Then, $\mu_1$ is the principal eigenvalue of the $L_2(I)$-realization of $-\partial_x^2$ and
\begin{equation}
-\partial_x^2 \zeta_1 = \mu_1\ \zeta_1 \;\;\text{ in }\;\; I\ , \quad \zeta_1(\pm 1)=0\ , \qquad \|\zeta_1\|_{L_1(I)}=1\ . \label{n2b}
\end{equation}
A classical technique to show that solutions only exist on a finite time interval is to study the evolution of 
$$
E_0(t) := \int_{-1}^1 \zeta_1(x)\ u(t,x)\ \rd x\ , \quad t\in [0,T_m^\varepsilon)\ ,
$$
and show that $E_0$ reaches $-1$ in finite time, a feature contradicting \eqref{n2}. Such an approach has been used successfully for the small aspect ratio model \eqref{z0} \cite{FloresMercadoPeleskoSmyth_SIAP07, GuoPanWard_2005} and the stationary version of \eqref{u}-\eqref{bcpsi} \cite{LaurencotWalker_ARMA}, the proof of the latter relying also heavily on the convexity of $u$. But, such a convexity property is not known for the evolution problem \eqref{u}-\eqref{bcpsi} (neither it is for \eqref{z0}) and studying the time evolution of $E_0$ does not seem to work. However, as we shall see below, the study of the time evolution of 
\begin{equation}
E_\alpha(t) := \int_{-1}^1 \zeta_1(x)\ \left( u + \frac{\alpha}{2}\ u^2\right)(t,x)\ \rd x\ , \quad t\in [0,T_m^\varepsilon)\ , \label{n3}
\end{equation}
for a suitable choice of $\alpha\in (0,1)$ leads us to the expected result. Performing that study requires to connect the behavior of $\psi$ to that of $u$ and we devote the next two results to this issue. We first start with an easy consequence of the boundary conditions \eqref{bcpsi}.

\begin{lem}\label{le.n1}
For $t\in [0,T_m^\varepsilon)$ and $p\in [1,\infty)$, we have
\begin{equation}
\frac{4p}{(p+1)^2}\ \int_{-1}^1 \frac{\zeta_1(x)}{1+u(t,x)}\ \rd x \le p\ \int_{\Omega(u(t))} \zeta_1(x)\ \psi(t,x,z)^{p-1}\ |\partial_z \psi(t,x,z)|^2\ \rd (x,z)\ . \label{n4}
\end{equation}
\end{lem}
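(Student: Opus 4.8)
The inequality \eqref{n4} should follow from testing the equation for $\psi$ against a suitable power of $\psi$, exploiting that $\psi$ vanishes on the ground plate $z=-1$ and equals $1$ on the membrane. Concretely, for fixed $t\in[0,T_m^\varepsilon)$ write $v:=u(t)$, $\Psi:=\psi(t)$, and note that by \eqref{psi}, for each $x\in I$ the function $z\mapsto\Psi(x,z)$ is essentially governed by $\partial_z^2\Psi = -\varepsilon^2\partial_x^2\Psi$; but rather than working with the transformed operator, the cleanest route is a one-dimensional argument in $z$ for a.e.\ $x$. The plan is to start from the boundary data $\Psi(x,-1)=0$ and $\Psi(x,v(x))=1$ from \eqref{bcpsi}, so that by the fundamental theorem of calculus and H\"older's inequality
\[
1 = \Psi(x,v(x))^{\frac{p+1}{2}} - \Psi(x,-1)^{\frac{p+1}{2}} = \frac{p+1}{2}\int_{-1}^{v(x)} \Psi(x,z)^{\frac{p-1}{2}}\,\partial_z\Psi(x,z)\,\rd z\ .
\]
Then Cauchy--Schwarz in $z$ over the interval $(-1,v(x))$, which has length $1+v(x)$, yields
\[
\frac{4}{(p+1)^2} \le (1+v(x))\int_{-1}^{v(x)} \Psi(x,z)^{p-1}\,|\partial_z\Psi(x,z)|^2\,\rd z\ ,
\]
i.e.\ after dividing by $1+v(x)>0$,
\[
\frac{4}{(p+1)^2}\,\frac{1}{1+v(x)} \le \int_{-1}^{v(x)} \Psi(x,z)^{p-1}\,|\partial_z\Psi(x,z)|^2\,\rd z\ .
\]

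The second step is to multiply this pointwise-in-$x$ inequality by $\zeta_1(x)\ge 0$ (recall $\zeta_1(x)=\pi\cos(\pi x/2)/4 \ge 0$ on $I$ by \eqref{n2b}) and integrate over $x\in I$. Since $\Omega(u(t)) = \{(x,z): x\in I,\ -1<z<v(x)\}$, Fubini's theorem turns the iterated integral on the right-hand side into $\int_{\Omega(u(t))} \zeta_1(x)\,\Psi(x,z)^{p-1}|\partial_z\Psi(x,z)|^2\,\rd(x,z)$, and multiplying both sides by $p$ gives exactly \eqref{n4}. One should record that $\Psi\ge 0$ on $\Omega(u(t))$ (from \eqref{ppsi1}, which gives $1+z-\max(u^0)_+\le\Psi$; note this lower bound is not obviously nonnegative, so more care is needed — see below) so that $\Psi^{p-1}$ and $\Psi^{(p-1)/2}$ are well defined, and that the regularity $\Psi(t)\in W_q^2(\Omega(u(t)))$ with $q>2$ makes the traces and the slicing argument legitimate (the map $z\mapsto\Psi(x,z)$ is absolutely continuous for a.e.\ $x$, and the chain rule for $\Psi^{(p+1)/2}$ is justified because $p\ge 1$ makes $s\mapsto s^{(p+1)/2}$ Lipschitz on bounded sets once $\Psi$ is known to be bounded).

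The main obstacle is the sign/integrability of $\Psi$: the argument above needs $\Psi\ge0$ so that the power functions $\Psi^{(p-1)/2}$ make sense and the chain rule is clean, but \eqref{ppsi1} only gives $\Psi\ge 1+z-\max(u^0)_+$, which can be negative when $\max(u^0)_+>0$. In the situation of this section, however, $u^0\le 0$ so $\max(u^0)_+=0$ and \eqref{ppsi1} reads $1+z\le\Psi\le 1$; since $z>-1$ on $\Omega(u(t))$ this forces $0<\Psi\le 1$, and all powers of $\Psi$ are harmless. Thus the lemma is stated precisely in the regime where positivity of $\Psi$ is automatic, and this is the point to invoke carefully. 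A secondary, purely technical point is justifying Fubini and the slice-wise fundamental theorem of calculus; this is routine given $\Psi\in W_q^2(\Omega(u(t)))\hookrightarrow C^1(\overline{\Omega(u(t))})$ for $q>2$, so in fact the slices are $C^1$ and no delicate absolute-continuity discussion is needed. I would therefore present the proof in the order: (1) pointwise-in-$x$ identity from the boundary conditions plus Cauchy--Schwarz; (2) multiply by $\zeta_1$, integrate, and apply Fubini, using $0<\Psi\le1$ from \eqref{ppsi1} throughout.
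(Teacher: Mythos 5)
Your proposal is correct and coincides with the paper's own proof: the identity $1=\psi(t,x,u(t,x))^{(p+1)/2}-\psi(t,x,-1)^{(p+1)/2}$ from \eqref{bcpsi}, the Cauchy--Schwarz inequality on the slice $(-1,u(t,x))$ of length $1+u(t,x)$, and then multiplication by $p\,\zeta_1\ge 0$ followed by integration in $x$ is exactly the argument given there. Your additional remarks on the positivity of $\psi$ (automatic here since $u^0\le 0$ gives $\psi\ge 1+z>0$ on $\Omega(u(t))$ by \eqref{ppsi1}) and on the regularity justifying the slicing are sound but not needed beyond what the paper tacitly uses.
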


\begin{proof}[{\bf Proof}]
For $(t,x)\in [0,T_m^\varepsilon)\times I$ and $p\in [1,\infty)$, it follows from \eqref{bcpsi} and the Cauchy-Schwarz inequality that
\begin{align*}
1 = & \psi(t,x,u(t,x))^{(p+1)/2} - \psi(t,x,-1)^{(p+1)/2}\\
 = & \frac{p+1}{2}\ \int_{-1}^{u(t,x)} \psi(t,x,z)^{(p-1)/2}\ \partial_z \psi(t,x,z)\ \rd z \\
\le & \frac{p+1}{2}\ \left( \int_{-1}^{u(t,x)} \psi(t,x,z)^{p-1}\ |\partial_z \psi(t,x,z)|^2\ \rd z \right)^{1/2}\ \sqrt{1+u(t,x)}\ ,
\end{align*}
hence
$$
\frac{4}{(p+1)^2}\ \frac{1}{1+u(t,x)} \le \int_{-1}^{u(t,x)}  \psi(t,x,z)^{p-1}\ |\partial_z \psi(t,x,z)|^2\ \rd z\ .
$$
Owing to the nonnegativity of $\zeta_1$, the estimate \eqref{n4} follows from the above inequality after multiplying both sides by $p \zeta_1(x)$ and integrating over $I$ with respect to $x$.
\end{proof}

The next lemma is a consequence of \eqref{psi}-\eqref{bcpsi}.

\begin{lem}\label{le.n2}
For $t\in [0,T_m^\varepsilon)$ and $p\in [1,\infty)$, we have
\begin{align}
  \int_{-1}^1 &\zeta_1(x)\ \left( 1 + \varepsilon^2\ |\partial_x u(t,x)|^2 \right)\ \partial_z \psi(t,x,u(t,x))\ \rd x \nonumber\\
& = \int_{\Omega(u(t))} \zeta_1(x)\ \left[ p \varepsilon^2\ \psi^{p-1}\ |\partial_x \psi|^2 + p\ \psi^{p-1}\ |\partial_z \psi|^2 + \frac{\mu_1 \varepsilon^2}{p+1}\ \psi^{p+1} \right](t,x,z)\ \rd(x,z) \nonumber\\ 
&  \quad -\ \frac{\mu_1 \varepsilon^2}{(p+1)(p+2)} - \frac{\mu_1\,\varepsilon^2}{p+1}\ \int_{-1}^1 \zeta_1(x)\ u(t,x)\ \rd x\ . \label{n5}
\end{align}
\end{lem}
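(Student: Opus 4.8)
The plan is to multiply the equation $\varepsilon^2\partial_x^2\psi+\partial_z^2\psi=0$ by the nonnegative weight $\zeta_1(x)\,\psi(t,x,z)^p$ and integrate over $\Omega(u(t))$, then integrate by parts in both variables, carefully tracking the boundary contributions coming from $z=-1$, $z=u(t,x)$, and $x=\pm1$. First I would record the weak/integral identity
\[
\int_{\Omega(u(t))}\zeta_1\,\psi^p\,\bigl(\varepsilon^2\partial_x^2\psi+\partial_z^2\psi\bigr)\,\rd(x,z)=0\ .
\]
For the $\partial_z^2\psi$ term I integrate by parts in $z$ alone (for fixed $x$, from $-1$ to $u(t,x)$): this produces $-p\int\zeta_1\psi^{p-1}|\partial_z\psi|^2$ plus the boundary term $\zeta_1\bigl[\psi^p\partial_z\psi\bigr]_{z=-1}^{z=u(t,x)}$. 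On the membrane $\psi(t,x,u(t,x))=1$ by \eqref{bcpsi}, while on the ground plate $\psi(t,x,-1)=0$, so this boundary term collapses to exactly $\zeta_1(x)\,\partial_z\psi(t,x,u(t,x))$; integrating in $x$ gives one of the two terms appearing on the left-hand side of \eqref{n5}.

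Next I would handle the $\varepsilon^2\partial_x^2\psi$ term, which is where the curvature of the free boundary enters. Because the top boundary $z=u(t,x)$ depends on $x$, one cannot naively integrate by parts in $x$ at fixed $z$; instead I would use the divergence theorem on the full two-dimensional domain $\Omega(u(t))$ for the vector field whose $x$-component is $\zeta_1\psi^p\partial_x\psi$. This yields $-\int_{\Omega}\partial_x(\zeta_1\psi^p)\,\partial_x\psi\,\rd(x,z)$ together with a boundary integral $\int_{\partial\Omega(u(t))}\zeta_1\psi^p\partial_x\psi\,n_x\,\rd\sigma$. The lateral boundaries $x=\pm1$ contribute nothing since $\zeta_1(\pm1)=0$; the bottom $z=-1$ contributes nothing since $n_x=0$ and $\psi=0$ there; and on the top graph the outward normal has $x$-component proportional to $-\partial_x u$ with $\rd\sigma$ absorbing the normalizing $\sqrt{1+\varepsilon^2 |\partial_x u|^2}$ factor, so (using $\psi=1$ there) the top contributes $-\int_{-1}^1\zeta_1(x)\,\partial_x u(t,x)\,\partial_x\psi(t,x,u(t,x))\,\rd x$, which by the chain rule identity \eqref{ppsi3} equals $+\int_{-1}^1\zeta_1\,|\partial_x u|^2\,\partial_z\psi(t,x,u(t,x))\,\rd x$. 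That is exactly the remaining term on the left side of \eqref{n5}. The interior term $-\int_\Omega\partial_x(\zeta_1\psi^p)\partial_x\psi$ splits as $-p\int_\Omega\zeta_1\psi^{p-1}|\partial_x\psi|^2-\int_\Omega\partial_x\zeta_1\,\psi^p\,\partial_x\psi$, and in the last piece I would write $\psi^p\partial_x\psi=\tfrac{1}{p+1}\partial_x(\psi^{p+1})$ and integrate by parts in $x$ once more (now with the fixed weight $\partial_x\zeta_1$, which requires another use of the divergence theorem on $\Omega(u(t))$, again with the only surviving boundary pieces being the top graph where $\psi^{p+1}=1$ and the bottom where it vanishes, plus the lateral sides where one must note $\partial_x\zeta_1(\pm1)\ne0$ but $\psi^{p+1}$ is $0$ on the bottom corner only — in fact the lateral sides contribute because $\psi$ is not zero there, so this boundary term must be kept and combined). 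Using $-\partial_x^2\zeta_1=\mu_1\zeta_1$ this produces $\frac{\mu_1}{p+1}\int_\Omega\zeta_1\psi^{p+1}$ plus the leftover boundary terms; evaluating those with $\psi^{p+1}=1$ on the graph $z=u(t,x)$ and $\psi=0$ on $z=-1$ yields, after an elementary $z$-integration, the two explicit correction terms $-\frac{\mu_1\varepsilon^2}{(p+1)(p+2)}$ and $-\frac{\mu_1\varepsilon^2}{p+1}\int_{-1}^1\zeta_1 u\,\rd x$ in \eqref{n5} (the $\frac1{(p+1)(p+2)}$ arising from $\int_{-1}^{u}\frac{(1+z)^{p+1}}{(1+u)^{p+1}}\rd z$-type computations via \eqref{bcpsi}). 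Collecting everything and multiplying the $\partial_x^2$ contribution by $\varepsilon^2$ gives \eqref{n5}.

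The main obstacle I anticipate is bookkeeping the boundary terms correctly on the moving top boundary and the lateral sides $x=\pm1$: one must be careful that $\partial_x\zeta_1$ does not vanish at $x=\pm1$, so the second integration by parts in $x$ of the $\psi^{p+1}$ term genuinely produces lateral boundary contributions that have to be tracked and shown to combine with the graph/plate contributions into the stated explicit constants. A clean way to avoid errors is to do all integrations by parts via the two-dimensional divergence theorem on $\Omega(u(t))$ from the outset (rather than iterated one-dimensional integrals), parametrizing $\partial\Omega(u(t))$ by its three pieces, and to use the identities \eqref{bcpsi} (i.e. $\psi\equiv1$ on the graph, $\psi\equiv0$ on $z=-1$) and \eqref{ppsi3} systematically. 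One should also note that the regularity $\psi(t)\in W_q^2(\Omega(u(t)))$ with $q>2$ together with $u(t)\in W_q^2(I)\hookrightarrow C^1$ is exactly what is needed to justify all these integrations by parts and the trace evaluations; a density/approximation argument with smooth $u$ and smooth $\psi$ can be invoked if one wants to be fully rigorous, passing to the limit using the continuous dependence of $\phi_v$ on $v$ established in Proposition~\ref{L1}.
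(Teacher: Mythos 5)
Your proposal is correct and follows essentially the same route as the paper: multiply \eqref{psi} by $\zeta_1\psi^p$, integrate over $\Omega(u(t))$, integrate by parts while tracking the contributions from the graph $z=u(t,x)$, the plate $z=-1$, and the lateral sides (where $\partial_x\zeta_1(\pm1)\neq 0$ and $\psi=1+z$ produce the $1/(p+2)$ constant), and use \eqref{ppsi3} together with $-\partial_x^2\zeta_1=\mu_1\zeta_1$ to assemble \eqref{n5}. All boundary terms are accounted for correctly, so nothing further is needed.
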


\begin{proof}[{\bf Proof}]
We multiply \eqref{psi} by $\zeta_1\ \psi^p$ and integrate over $\Omega(u)$. Integrating by parts and using the boundary conditions for $\psi$ and $\zeta_1$ and \eqref{ppsi3}, we obtain
\begin{align*}
0 = & - \int_{\Omega(u)} \left( p \varepsilon^2\ \zeta_1\ \psi^{p-1}\ |\partial_x \psi|^2 + \varepsilon^2\ \partial_x\zeta_1\ \psi^p\ \partial_x\psi + p\ \zeta_1\ \psi^{p-1}\ |\partial_z \psi|^2 \right) \rd (x,z) \\
& + \int_{-1}^1 \zeta_1(x)\ \left[ -\varepsilon^2\ \partial_x u(x)\ \partial_x\psi(x,u(x)) + \partial_z\psi(x,u(x)) \right] \rd x\\
= & \int_{-1}^1 \zeta_1(x)\ \left( 1 + \varepsilon^2\ |\partial_x u(x)|^2 \right)\ \partial_z \psi(x,u(x))\ \rd x \\
& - p\ \int_{\Omega(u)} \zeta_1\ \psi^{p-1} \left[ \varepsilon^2\  |\partial_x \psi|^2 + |\partial_z \psi|^2 \right] \rd (x,z) - \frac{\varepsilon^2}{p+1}\ \int_{\Omega(u)} \partial_x\zeta_1\ \partial_x\left( \psi^{p+1} \right) \rd (x,z)\ .
\end{align*}
Since 
\begin{align*}
- \int_{\Omega(u)} \partial_x\zeta_1\ \partial_x\left( \psi^{p+1} \right) \rd (x,z) = & \int_{\Omega(u)} \partial_x^2\zeta_1\  \psi^{p+1}\ \rd (x,z) \\
& + \left( \partial_x\zeta_1(-1) - \partial_x\zeta_1(1) \right)\ \int_{-1}^0 (1+z)^{p+1}\ \rd z \\
& + \int_{-1}^1 \partial_x\zeta_1(x)\ \partial_x u(x)\ \rd x \\
= & - \mu_1\ \int_{\Omega(u)} \zeta_1\  \psi^{p+1}\ \rd (x,z) + \frac{\mu_1}{p+2} - \int_{-1}^1 \partial_x^2\zeta_1(x)\ u(x)\ \rd x \\
= & - \mu_1\ \int_{\Omega(u)} \zeta_1\  \psi^{p+1}\ \rd (x,z) + \frac{\mu_1}{p+2} + \mu_1 \int_{-1}^1 \zeta_1(x)\ u(x)\ \rd x
\end{align*}
by \eqref{bcpsi} and \eqref{n2b}, we end up with \eqref{n5}.
\end{proof}

\begin{proof}[{\bf Proof of Theorem~\ref{TD} (ii)}]
Let $\alpha\in (0,1)$ to be determined later. We first note that \eqref{n2} implies that the function $E_\alpha$, defined in \eqref{n3}, satisfies
\begin{equation}
-1 \le \frac{\alpha-2}{2} \le E_\alpha(t) \le 0\ ,\qquad t\in [0,T_m^\varepsilon)\ . \label{n4b}
\end{equation}
We next multiply \eqref{u} by $\zeta_1\ (1+\alpha\ u)$, integrate over $I$ and use \eqref{n2b} and \eqref{ppsi3} to obtain
\begin{eqnarray*}
\frac{\rd E_\alpha}{\rd t} & + & \mu_1\ E_\alpha + \alpha\ \int_{-1}^1 \zeta_1\ |\partial_x u|^2\ \rd x = \int_{-1}^1 \zeta_1\ (1+\alpha\ u)\ \left( \partial_t u - \partial_x^2 u \right) \rd x \\
& = & - \lambda\ \int_{-1}^1 \zeta_1(x)\ (1+\alpha\ u(x))\ \left( 1 + \varepsilon^2\ |\partial_x u(x)|^2 \right)\ |\partial_z\psi(x,u(x))|^2\ \rd x\ .
\end{eqnarray*}
Since $\zeta_1(x)\ge 0$ and $1+\alpha\ u(x)\ge 1-\alpha$ by \eqref{n2}, we further obtain
\begin{equation}
\frac{\rd E_\alpha}{\rd t} + \mu_1\ E_\alpha + \alpha\ \int_{-1}^1 \zeta_1\ |\partial_x u|^2\ \rd x \le - \lambda (1-\alpha)\ \mathcal{R} \label{n6}
\end{equation}
with 
$$
\mathcal{R}(t) := \int_{-1}^1 \zeta_1(x)\ \left( 1 + \varepsilon^2\ |\partial_x u(t,x)|^2 \right)\ |\partial_z\psi(t,x,u(t,x))|^2\ \rd x\ , \quad t\in [0,T_m^\varepsilon)\ .
$$
We now look for a lower bound for $\mathcal{R}$. To this end we observe that $\mathcal{R}$ reminds of the left-hand side of \eqref{n5} while \eqref{n4} provides a lower bound of the right-hand side of \eqref{n5}. More precisely, let $\beta>0$ and $p\ge 1$ be two positive real numbers to be determined later. It follows from Young's inequality that
\begin{align*}
\int_{-1}^1 \zeta_1(x)\ & \left( 1 + \varepsilon^2\ |\partial_x u(x)|^2 \right)\ \partial_z\psi(x,u(x))\ \rd x \\ 
\le & \beta\ \mathcal{R} + \frac{1}{4\beta}\ \int_{-1}^1 \zeta_1(x)\ \left( 1 + \varepsilon^2\ |\partial_x u(x)|^2 \right)\rd x\ ,
\end{align*}
that is,
\begin{align*}
\mathcal{R} \ge &\frac{1}{\beta}\ \int_{-1}^1 \zeta_1(x)\ \left( 1 + \varepsilon^2\ |\partial_x u(x)|^2 \right)\ \partial_z\psi(x,u(x))\ \rd x \\ 
& - \frac{1}{4\beta^2}\left(1+\ve^2 \int_{-1}^1 \zeta_1(x)\  |\partial_x u(x)|^2 \rd x\right)\ .
\end{align*}
We now infer from Lemma~\ref{le.n2}, Lemma~\ref{le.n1}, \eqref{n2b}, and the non-positivity of $\zeta_1 u$ that
\begin{align*}
\mathcal{R} \ge & \frac{1}{\beta}\ \left[ p\ \int_{\Omega(u)} \zeta_1\ \psi^{p-1}\ |\partial_z\psi|^2\ \rd(x,z) - \frac{\mu_1 \varepsilon^2}{(p+1)(p+2)} - \frac{\mu_1 \varepsilon^2}{p+1}\ \int_{-1}^1 \zeta_1\ u\ \rd x \right] \\
& - \frac{1}{4\beta^2}\ \left( 1 + \varepsilon^2\ \int_{-1}^1 \zeta_1\ |\partial_x u|^2 \rd x \right)\\
\ge & \frac{1}{\beta}\ \left[ \frac{4p}{(p+1)^2}\ \int_{-1}^1 \frac{\zeta_1}{1+u}\ \rd x - \frac{\mu_1 \varepsilon^2}{(p+1)(p+2)} \right] - \frac{1}{4\beta^2}\ \left( 1 + \varepsilon^2\ \int_{-1}^1 \zeta_1\ |\partial_x u|^2 \rd x \right)\\
\ge & \frac{1}{\beta p}\ \int_{-1}^1 \frac{\zeta_1}{1+u+\alpha\ u^2/2}\ \rd x - \frac{\mu_1 \varepsilon^2}{\beta p^2} - \frac{1}{4\beta^2} - \frac{\varepsilon^2}{4\beta^2}\ \int_{-1}^1 \zeta_1\ |\partial_x u|^2 \rd x \ .
\end{align*}
Finally, since $y\mapsto (1+y)^{-1}$ is convex and $\|\zeta_1\|_{L_1(I)}=1$, we use Jensen's inequality as in \cite{FloresMercadoPeleskoSmyth_SIAP07} and get
$$
\mathcal{R} \ge \frac{1}{\beta p}\ \frac{1}{1+E_\alpha} - \frac{\mu_1 \varepsilon^2}{\beta p^2} - \frac{1}{4\beta^2} - \frac{\varepsilon^2}{4\beta^2}\ \int_{-1}^1 \zeta_1\ |\partial_x u|^2 \rd x\ .
$$
Inserting this estimate in \eqref{n6} and using \eqref{n4b} give 
\begin{align*}
 \frac{\rd E_\alpha}{\rd t} & - \mu_1 + \alpha\ \int_{-1}^1 \zeta_1\ |\partial_x u|^2\ \rd x \\
& \le - \frac{\lambda (1-\alpha)}{\beta p} \left[ \frac{1}{1+E_\alpha} - \frac{\mu_1 \varepsilon^2}{p} - \frac{p}{4\beta} - \frac{p \varepsilon^2}{4\beta}\ \int_{-1}^1 \zeta_1\ |\partial_x u|^2 \rd x \right]\ ,
\end{align*}
whence
\begin{align*}
& \frac{\rd E_\alpha}{\rd t} + \left( \alpha - \frac{\lambda (1-\alpha) \varepsilon^2}{4\beta^2} \right)\ \int_{-1}^1 \zeta_1\ |\partial_x u|^2\ \rd x \le \mu_1 + \frac{\lambda (1-\alpha)}{\beta p} \left[ \frac{\mu_1 \varepsilon^2}{p} + \frac{p}{4\beta} - \frac{1}{1+E_\alpha} \right]\ .
\end{align*}
At this point, the role of the additional parameter $\alpha$ becomes clear as it allows us to control the $\lambda$-dependent term involving $\partial_x u$. We thus choose 
$$
\alpha = \frac{\lambda\varepsilon^2}{4\beta^2+\lambda\varepsilon^2}\in (0,1)\ , \;\;\text{ so that }\;\; \alpha = \frac{\lambda (1-\alpha) \varepsilon^2}{4\beta^2} \ ,
$$
and obtain the following differential inequality for $E_\alpha$:
\begin{equation}
\frac{\rd E_\alpha}{\rd t} \le \mathcal{F}(E_\alpha) := \mu_1 + \frac{4 \lambda \beta}{(4\beta^2+\lambda\varepsilon^2) p} \left[ \frac{\mu_1 \varepsilon^2}{p} + \frac{p}{4\beta} - \frac{1}{1+E_\alpha} \right]\ .\label{n100}
\end{equation}
Since $\mathcal{F}$ is an increasing function on $(-1,\infty)$, it readily follows from the non-positivity of $E_\alpha$ and \eqref{n100} that, if $\mathcal{F}(0)<0$, then $E_\alpha(t)\le E_\alpha(0)\le 0$ and $\rd E_\alpha(t)/\rd t \le \mathcal{F}(0)<0$ for all $t\in [0,T_m^\varepsilon)$. Integrating this differential inequality and using \eqref{n4b}, we conclude that $-1 \le \mathcal{F}(0) t$ for all $t\in [0,T_m^\varepsilon)$ and thus that $T_m^\varepsilon\le -1/\mathcal{F}(0)<\infty$ as claimed.

We are then left with showing that we can find parameters $\beta>0$ and $p\ge 1$ such that $\mathcal{F}(0)<0$ for $\lambda$ large enough. To this end we choose $\beta = \sqrt{\lambda}/2>0$ and $p=1+2\mu_1\varepsilon^2\ge 1$ so that $\alpha=\varepsilon^2/(1+\varepsilon^2)$ and 
$$
\mathcal{F}(0) \le \mu_1 + \frac{2\sqrt{\lambda}}{1+\varepsilon^2}\ \left[ \frac{1}{2} + \frac{1+2\mu_1\varepsilon^2}{2\sqrt{\lambda}} - 1 \right] \le \mu_1 + \frac{\sqrt{\lambda}}{1+\varepsilon^2}\ \left[ \frac{1+2\mu_1\varepsilon^2}{\sqrt{\lambda}} - 1 \right]\ .
$$
Therefore, if $\sqrt{\lambda}>4 \mu_1\ (1+\varepsilon^2)$, we have
$$
\mathcal{F}(0) \le \mu_1 - \frac{\sqrt{\lambda}}{2(1+\varepsilon^2)} < 0\ ,
$$
and the proof of Theorem~\ref{TD}(ii) is  complete.
\end{proof}



}
{
\section{Asymptotic Stability: Proof of Theorem~\ref{TStable}}\label{SectTStable}

Let $q\in (2,\infty)$, $\ve>0$, and $\kappa\in (0,1)$ be fixed. We first prove Theorem~\ref{TStable}(i). Recall that, choosing $\sigma\in (1/2-1/q,1/2)$ so that $W_2^{2\sigma}(I)\hookrightarrow L_q(I)$, Proposition~\ref{L1} states that $g_\ve:S_q(\kappa)\rightarrow L_q(I)$ is analytic. Therefore, since the generator of the heat semigroup $-A:=-A_q\in\mathcal{L}(W_{q,D}^2(I),L_q(I))$ is invertible, we obtain that the mapping
$$
F:\R\times S_q(\kappa)\rightarrow W_{q,D}^2(I)\ ,\quad (\lambda,v)\mapsto v-\lambda A^{-1}g_\ve(v)
$$
is analytic with $F(0,0)=0$ and $D_vF(0,0) =\mathrm{id}_{W_{q,D}^2}$. Now, the Implicit Function Theorem ensures the existence of $\delta>0$ and an analytic function 
$$[\lambda\mapsto U_\lambda]:[0,\delta)\rightarrow W_{q,D}^2(I)$$ such that $F(\lambda,U_\lambda)=0$ for $\lambda\in [0,\delta)$. Denoting the solution to \eqref{psi}-\eqref{bcpsi} with $U_\lambda$ instead of $u$ by $\Psi_\lambda\in W_2^2(\Omega(U_\lambda))$, the pair $(U_\lambda,\Psi_\lambda)$ is for each $\lambda\in (0,\delta)$ the unique steady-state to
\eqref{u}-\eqref{bcpsi} with $U_\lambda$ in $S_q(\kappa)$ and $U_0=0$. Since $U_\lambda$ is convex and satisfies the Dirichlet conditions $U_\lambda(\pm 1)=0$ we clearly have $U_\lambda\le 0$ for $\lambda\in (0,\delta)$. That $U_\lambda$ is even follows from uniqueness and \cite[Thm.1]{LaurencotWalker_ARMA}. This proves Theorem~\ref{TStable} (i).\\

To prove part (ii) of Theorem~\ref{TStable}, we use the Principle of Linearized Stability. For this, we fix $\lambda\in (0,\delta)$ and introduce the linearization of $g_\ve$, 
$$
B_\lambda:=\lambda D_vg_\ve(U_\lambda)\in\mathcal{L}(W_{q,D}^2(I),L_q(I))\ .
$$
Since $\|B_\lambda\|_{\mathcal{L}(W_{q,D}^2(I),L_q(I))}\rightarrow 0$ as $\lambda\rightarrow 0$, it follows from \cite[I.Cor.1.4.3]{LQPP} that $-A-B_\lambda$ is the generator of an analytic semigroup on $L_q(I)$ and there is $\omega_1>0$ such that the complex half plane $[\mathrm{Re}\, z\ge -\omega_1]$ belongs to the resolvent set of $-A-B_\lambda$ provided that $\lambda$ is sufficiently small. 
Now write
$v=u-U_\lambda$ and consider the linearization of \eqref{CP},
\bqn\label{CPL}
\dot{v}+(A+B_\lambda)v=G_\lambda(v)\ ,\quad t>0\ ,\qquad v(0)=v^0\ ,
\eqn
where $G_\lambda\in C^2(O_\lambda,L_q(I))$ is defined on some open zero neighborhood $O_\lambda$ in $W_{q,D}^2(I)$ such that $U_\lambda+O_\lambda\subset S_q(\kappa)$ and given by
$$
G_\lambda(v):=-\lambda\big(g_\ve(U_\lambda+v)-g_\ve(U_\lambda)-D_vg_\ve(U_\lambda)v\big)\ .
$$
Since $-(A+B_\lambda)$ is the generator of an analytic semigroup on $L_q(I)$ with a negative spectral bound as observed above, we may apply \cite[Thm.9.1.1]{Lunardi} and conclude statement (ii) of Theorem~\ref{TStable} by making $\delta>0$ smaller, if necessary.\\

A straightforward consequence of \eqref{est} and \eqref{RR} is:

\begin{cor}\label{RR1}
Under the assumptions of Theorem~\ref{TStable}(ii) there is $R_1>0$ such that
$$
\|\phi_{u(t)}-\phi_{U_\lambda}\|_{W_2^2(\Omega)}\le R_1 e^{-\omega_0 t}\| u^0-U_\lambda\|_{W_{q,D}^2(I)}\ ,\quad t\ge 0\ ,
$$
where $\phi_v$ is defined in Proposition~\ref{L1}.
\end{cor}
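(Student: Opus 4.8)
The plan is to combine the exponential estimate \eqref{est} for $u(t)-U_\lambda$ in $W_{q,D}^2(I)$ from Theorem~\ref{TStable}(ii) with the Lipschitz estimate \eqref{RR} for the map $v\mapsto\phi_v$ established in the proof of Proposition~\ref{L1}. First I would note that, by Theorem~\ref{TStable}(ii) applied with $r$ small enough, the solution $u$ exists globally and, shrinking $r$ if necessary, stays in $S_q(\kappa)$ for all $t\ge 0$; more precisely \eqref{est} gives $\|u(t)-U_\lambda\|_{W_{q,D}^2(I)}\le Re^{-\omega_0 t}\|u^0-U_\lambda\|_{W_{q,D}^2(I)}$, so for $r$ sufficiently small we indeed have $u(t)\in S_q(\kappa)$ for every $t\ge0$ and the quantities $\phi_{u(t)}$ and $\phi_{U_\lambda}$ are both well-defined elements of $W_2^2(\Omega)$ by Proposition~\ref{L1}.

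The core of the argument is then a single application of \eqref{RR}: taking $v_1=u(t)$ and $v_2=U_\lambda$ (both in $S_q(\kappa)$) yields
\[
\|\phi_{u(t)}-\phi_{U_\lambda}\|_{W_2^2(\Omega)}\le c_9(\kappa,\varepsilon)\,\|u(t)-U_\lambda\|_{W_q^2(I)}\ ,\qquad t\ge 0\ .
\]
Composing this with \eqref{est} gives
\[
\|\phi_{u(t)}-\phi_{U_\lambda}\|_{W_2^2(\Omega)}\le c_9(\kappa,\varepsilon)\,R\,e^{-\omega_0 t}\,\|u^0-U_\lambda\|_{W_{q,D}^2(I)}\ ,
\]
so that the claim follows with $R_1:=c_9(\kappa,\varepsilon)\,R$. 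One should be slightly careful about the norm bookkeeping, namely that $\|\cdot\|_{W_q^2(I)}$ and $\|\cdot\|_{W_{q,D}^2(I)}$ agree on the relevant subspace (functions vanishing at $\pm1$), which is immediate from the definition of $W_{q,D}^2(I)$, so no genuine difficulty arises there.

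I do not expect any serious obstacle in this corollary: everything is essentially a chaining of two inequalities already proved, and the only point requiring a line of care is the observation that $u(t)$ remains in $S_q(\kappa)$ for all $t\ge 0$ (not merely in $\overline{S}_q(\kappa)$), which is guaranteed once $r$ is chosen small enough because $U_\lambda\in S_q(\kappa)$ and $S_q(\kappa)$ is open in $W_{q,D}^2(I)$, so a small $W_{q,D}^2(I)$-ball around $U_\lambda$ is contained in $S_q(\kappa)$ and \eqref{est} keeps $u(t)$ inside that ball. If one prefers to avoid shrinking $r$, an alternative is to invoke the global existence and the bound $u(t)\in\overline{S}_q(\kappa_0)$ already established, together with a version of \eqref{RR} valid on $\overline{S}_q(\kappa_0)$ — but the direct route above is the cleanest and is what I would write.
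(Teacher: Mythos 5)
Your proposal is correct and is exactly the argument the paper intends: the corollary is obtained by chaining the Lipschitz estimate \eqref{RR} for $v\mapsto\phi_v$ on $S_q(\kappa)$ with the exponential decay \eqref{est}, and your extra remark about shrinking $r$ so that $u(t)$ stays in the open set $S_q(\kappa)$ is the right (minor) point of care.
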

}

\section{Small Aspect Ratio Limit: Proof of Theorem~\ref{Bq}}\label{Sec4} 

We shall now prove Theorem~\ref{Bq}. {Fix $\lambda>0$, $q\in (2,\infty)$, $\kappa\in (0,1)$, and let $u^0\in S_q(\kappa)$ with $u^0(x)\le 0$ for $x\in I$. For $\varepsilon>0$ we denote the unique solution to \eqref{u}-\eqref{bcpsi} by $(u_\varepsilon,\psi_\varepsilon)$ which is defined on the maximal interval of existence $[0,T_m^\varepsilon)$. 
In the following, $(K_i)_{i\ge 1}$ and $K$ denote positive constants depending only on $q$ and $\kappa$, but not on $\ve>0$ sufficiently small.

Set $\kappa_0 := \kappa/(2M)< \kappa$, where $M\ge 1$ is the constant defined in \eqref{9}. Owing to the continuity properties of $u_\varepsilon$, we have
\begin{equation}
\tau^\varepsilon := \sup{\left\{ t\in [0,T_m^\varepsilon)\ :\ u_\varepsilon(s)\in\overline{S}_q(\kappa_0) \;\;\text{ for all }\;\; s\in [0,t] \right\}} > 0\ . \label{pam2}
\end{equation}
Thanks to the continuity of the embedding of $W_q^2(I)$ in $W_\infty^1(I)$, there is a positive constant $K_1$ such that, for all $\varepsilon>0$,
\begin{align}
-1 + \kappa_0 \le u_\varepsilon(t,x) & \le 0\,, \qquad (t,x)\in [0,\tau^\varepsilon]\times [-1,1]\,, \label{z1} \\
\|u_\varepsilon(t)\|_{W_q^2(I)} + \|u_\varepsilon(t)\|_{W_\infty^1(I)} & \le K_1\,, \qquad t\in [0,\tau^\varepsilon]\,. \label{z2}
\end{align}
As a consequence of \eqref{z2} there is $\varepsilon_0>0$ depending only $q$ and $\kappa$} such that 
\begin{equation}
\varepsilon_0^2\ \left\| \partial_x u_\varepsilon(t) \right\|_{L_\infty(I)}^2 \le \frac{1}{2}\,, \qquad (t,\varepsilon)\in [0,\tau^\varepsilon]\times (0,\varepsilon_0]\,. \label{z3}
\end{equation}
For $\e\in (0,\varepsilon_0)$, we set 
$$
\phi_\e(t) := \phi_{u_\varepsilon(t)}=\psi_\varepsilon(t) \circ T_{\ue(t)}^{-1}\ ,\qquad t\in [0,\tau^\varepsilon]\ ,
$$ 
with $T_{\ue(t)}^{-1}$ given by \eqref{Tuu} and 
$$
\Phi_\varepsilon(t,x,\eta) := \phi_\varepsilon(t,x,\eta)-\eta\ ,\qquad (t,x,\eta)\in [0,\tau^\varepsilon]\times\overline{\Omega}\,.
$$ 
The cornerstone of the proof of Theorem~\ref{Bq} is to derive appropriate estimates on $\Phi_\varepsilon$, showing that it converges to zero as $\varepsilon\to 0$. To this end, we further develop and improve the analysis performed in \cite[Section~3]{LaurencotWalker_ARMA} and establish the following bounds:

\begin{lem}\label{le:z1}
There exists a positive constant $K_2$ such that, for $\varepsilon\in (0,\varepsilon_0)$ and $t\in [0,\tau^\varepsilon]$, 
\begin{align}
\left\| \partial_x\Phie(t) \right\|_{L_2(\Omega)} + \frac{1}{\e}\ \left( \left\| \Phie(t) \right\|_{L_2(\Omega)} + \left\| \partial_\eta\Phie(t) \right\|_{L_2(\Omega)} \right) & \le K_2\,, \label{z41} \\
\frac{1}{\varepsilon} \left\| \partial_x\partial_\eta\Phie(t) \right\|_{L_2(\Omega)} + \frac{1}{\varepsilon^2}\ \left\| \partial_\eta^2\Phie(t)\right\|_{L_2(\Omega)} & \le K_2\,, \label{z42} \\
\frac{1}{\varepsilon}\ \left\| \partial_\eta{ \Phie(t,\cdot,1)} \right\|_{W_2^{1/2}(I)} & \le K_2\,. \label{z5}
\end{align}
\end{lem}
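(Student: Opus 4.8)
The plan is to obtain the estimates in Lemma~\ref{le:z1} by energy methods applied to the elliptic problem \eqref{230}--\eqref{240} satisfied by $\phi_\varepsilon(t)=\Phi_\varepsilon(t)+\eta$, exploiting the explicit structure of $\mathcal{L}_v$ and the $\varepsilon$-uniform bounds \eqref{z1}, \eqref{z2}, \eqref{z3} on $u_\varepsilon$. First I would record that $\Phi_\varepsilon(t)$ solves $-\mathcal{L}_{u_\varepsilon(t)}\Phi_\varepsilon(t) = f_{u_\varepsilon(t)}$ in $\Omega$ with homogeneous Dirichlet data, where $f_v = \mathcal{L}_v\eta = \varepsilon^2\eta\big[2(\partial_x v/(1+v))^2 - \partial_x^2 v/(1+v)\big]$; crucially $f_{u_\varepsilon(t)}$ carries a prefactor $\varepsilon^2$, and \eqref{z1}--\eqref{z2} give $\|f_{u_\varepsilon(t)}\|_{L_2(\Omega)}\le K\varepsilon^2$. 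For \eqref{z41}, test the equation against $\Phi_\varepsilon(t)$ and integrate by parts. The quadratic form $\int_\Omega \big(a_{11}(\partial_x\Phi_\varepsilon)^2 + 2a_{12}\partial_x\Phi_\varepsilon\partial_\eta\Phi_\varepsilon + a_{22}(\partial_\eta\Phi_\varepsilon)^2\big)$ with $a_{11}=\varepsilon^2$, $a_{12}=-\varepsilon^2\eta\,\partial_x u_\varepsilon/(1+u_\varepsilon)$, $a_{22}=(1+\varepsilon^2\eta^2|\partial_x u_\varepsilon|^2)/(1+u_\varepsilon)^2$ is, by \eqref{z1} and \eqref{z3}, bounded below by $c\big(\varepsilon^2\|\partial_x\Phi_\varepsilon\|_{L_2}^2 + \|\partial_\eta\Phi_\varepsilon\|_{L_2}^2\big)$ after a Cauchy--Schwarz/Young absorption of the cross term (the $\varepsilon^2$ in $a_{12}$ means the cross term is controlled by a small multiple of the two diagonal terms). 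The lower-order terms $b_1\partial_x\Phi_\varepsilon + b_2\partial_\eta\Phi_\varepsilon$, with $b_1,b_2 = O(\varepsilon^2)$ in $L_\infty$, are absorbed similarly, and the right-hand side contributes $\|f_{u_\varepsilon(t)}\|_{L_2}\|\Phi_\varepsilon\|_{L_2}\le K\varepsilon^2\|\Phi_\varepsilon\|_{L_2}$, controlled via Poincar\'e ($\|\Phi_\varepsilon\|_{L_2}\le C\|\partial_\eta\Phi_\varepsilon\|_{L_2}$, valid since $\Phi_\varepsilon$ vanishes on $\eta=0,1$). Solving the resulting inequality yields $\varepsilon\|\partial_x\Phi_\varepsilon\|_{L_2} + \|\partial_\eta\Phi_\varepsilon\|_{L_2}\le K\varepsilon^2$ and then $\|\Phi_\varepsilon\|_{L_2}\le K\varepsilon^2$ by Poincar\'e, i.e.\ \eqref{z41} after dividing by $\varepsilon$.

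For \eqref{z42} the natural idea is to differentiate the equation with respect to $\eta$ and perform a second energy estimate, or equivalently to bootstrap from \eqref{z41}. Since $\phi_\varepsilon$ solves $\mathcal{L}_{u_\varepsilon}\phi_\varepsilon=0$, one can rewrite this to isolate $\partial_\eta^2\phi_\varepsilon = \partial_\eta^2\Phi_\varepsilon$: from the explicit form of $\mathcal{L}_v$,
\begin{equation*}
\frac{1+\varepsilon^2\eta^2|\partial_x u_\varepsilon|^2}{(1+u_\varepsilon)^2}\,\partial_\eta^2\Phi_\varepsilon = -\varepsilon^2\partial_x^2\Phi_\varepsilon + 2\varepsilon^2\eta\frac{\partial_x u_\varepsilon}{1+u_\varepsilon}\partial_x\partial_\eta\Phi_\varepsilon - \varepsilon^2\eta\Big[2\Big(\frac{\partial_x u_\varepsilon}{1+u_\varepsilon}\Big)^2-\frac{\partial_x^2 u_\varepsilon}{1+u_\varepsilon}\Big]\partial_\eta\Phi_\varepsilon + f_{u_\varepsilon},
\end{equation*}
so $\|\partial_\eta^2\Phi_\varepsilon\|_{L_2}$ is controlled once $\varepsilon^2\|\partial_x^2\Phi_\varepsilon\|_{L_2}$ and $\varepsilon\|\partial_x\partial_\eta\Phi_\varepsilon\|_{L_2}$ are. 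The cleanest route is to differentiate the PDE in $\eta$ and test against $\partial_\eta^2\Phi_\varepsilon$ (note $\partial_\eta\Phi_\varepsilon$ need not vanish on $\eta=0,1$, so one works with $\partial_\eta\Phi_\varepsilon$ directly and uses the boundary terms carefully, or alternatively test the $\eta$-differentiated equation against $\partial_\eta\Phi_\varepsilon$ itself after subtracting its boundary values). This produces, analogously to before, a coercive quadratic form in $(\varepsilon\partial_x\partial_\eta\Phi_\varepsilon, \partial_\eta^2\Phi_\varepsilon)$ plus commutator terms coming from $\eta$-derivatives of the coefficients $a_{ij}, b_i$ — these involve at most $\partial_x u_\varepsilon$ and $\partial_x^2 u_\varepsilon$, which are bounded in $L_\infty$ and $L_q$ respectively by \eqref{z2}, and they again come with powers of $\varepsilon^2$. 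Pairing with the right-hand side $\partial_\eta f_{u_\varepsilon}=\varepsilon^2[\dots]$, whose $L_2$-norm is $\le K\varepsilon^2$, and using \eqref{z41} to control lower-order contributions, one arrives at $\varepsilon\|\partial_x\partial_\eta\Phi_\varepsilon\|_{L_2} + \|\partial_\eta^2\Phi_\varepsilon\|_{L_2}\le K\varepsilon^2$, which is \eqref{z42} after dividing by $\varepsilon$ and $\varepsilon^2$ respectively.

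Finally, \eqref{z5} follows from the trace theorem: $\partial_\eta\Phi_\varepsilon(t,\cdot,1)$ is the trace on $\eta=1$ of $\partial_\eta\Phi_\varepsilon(t)\in W_2^1(\Omega)$, so by the trace embedding $W_2^1(\Omega)\hookrightarrow W_2^{1/2}(I)$ one gets $\|\partial_\eta\Phi_\varepsilon(t,\cdot,1)\|_{W_2^{1/2}(I)}\le C\|\partial_\eta\Phi_\varepsilon(t)\|_{W_2^1(\Omega)}\le C\big(\|\partial_\eta\Phi_\varepsilon\|_{L_2} + \|\partial_x\partial_\eta\Phi_\varepsilon\|_{L_2} + \|\partial_\eta^2\Phi_\varepsilon\|_{L_2}\big)\le K\varepsilon$ by \eqref{z41}--\eqref{z42}, giving \eqref{z5} after dividing by $\varepsilon$. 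The main obstacle I anticipate is the second energy estimate \eqref{z42}: handling the boundary terms in the $\eta$-differentiated problem (since $\partial_\eta\Phi_\varepsilon$ lacks homogeneous boundary data) and keeping careful track of which commutator terms carry enough powers of $\varepsilon$ and of the $u_\varepsilon$-norms from \eqref{z2} — in particular the term involving $\partial_x^2 u_\varepsilon$, which is only in $L_q$ and must be paired via the embedding $L_q(\Omega)\cdot W_2^1(\Omega)\hookrightarrow L_2(\Omega)$ (for $q>2$), exactly as in the proof of Proposition~\ref{L1}. Getting the correct scaling factors $1/\varepsilon$, $1/\varepsilon^2$ throughout, so that the final bounds are genuinely $\varepsilon$-uniform, is the delicate bookkeeping that drives the whole argument.
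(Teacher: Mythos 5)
Your overall strategy is the paper's: exploit the $\varepsilon^2$ prefactor in $f_{u_\varepsilon}=\mathcal{L}_{u_\varepsilon}\eta$, run energy estimates on the transformed elliptic problem using the coercivity of the quadratic form (which, via the perfect-square identity $a_{11}p^2+2a_{12}pq+a_{22}q^2=\varepsilon^2\bigl(p-\eta\tfrac{\partial_x u_\varepsilon}{1+u_\varepsilon}q\bigr)^2+\tfrac{q^2}{(1+u_\varepsilon)^2}$ together with \eqref{z3}, is bounded below by $\tfrac{\varepsilon^2}{2}p^2+\tfrac12 q^2$ exactly as you say), and finish \eqref{z5} by the trace theorem. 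Your first-order estimate \eqref{z41} is sound (the paper uses the $L_\infty$ bound $\|\Phi_\varepsilon\|_{L_\infty}\le 1$ from the maximum principle where you use Poincar\'e in $\eta$; both close the estimate), and \eqref{z5} is identical to the paper's.

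The genuine weak point is \eqref{z42}, where you explicitly leave open the two steps that carry the whole difficulty. First, the paper does \emph{not} differentiate the equation in $\eta$ (which, as you note, creates an unmanageable boundary-value problem for $\partial_\eta\Phi_\varepsilon$); it multiplies the \emph{original} equation \eqref{23a} by $\zeta_\varepsilon=\partial_\eta^2\Phi_\varepsilon$ and uses Grisvard's integration-by-parts identities for $W_2^2$-functions vanishing on the boundary of a rectangle \cite[Lem.~4.3.1.2--4.3.1.3]{Grisvard} to convert $\int_\Omega\partial_x^2\Phi_\varepsilon\,\partial_\eta^2\Phi_\varepsilon$ into $\int_\Omega|\partial_x\partial_\eta\Phi_\varepsilon|^2$ plus harmless corner contributions; this reproduces the same perfect-square structure in the pair $(\omega_\varepsilon,\zeta_\varepsilon)=(\partial_x\partial_\eta\Phi_\varepsilon,\partial_\eta^2\Phi_\varepsilon)$ and sidesteps the boundary terms entirely. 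Second, the resulting right-hand side is the trilinear term $\int_\Omega f_\varepsilon(1-\partial_\eta\Phi_\varepsilon)\zeta_\varepsilon$, and since $f_\varepsilon$ is only in $L_q$ (because of $\partial_x^2 u_\varepsilon$, cf.\ \eqref{z2}), one needs $\partial_\eta\Phi_\varepsilon$ in $L_{2q/(q-2)}(\Omega)$ --- a norm not controlled by \eqref{z41} but only by the quantity $Q_\varepsilon=(\|\zeta_\varepsilon\|_{L_2}^2+\varepsilon^2\|\omega_\varepsilon\|_{L_2}^2)^{1/2}$ being estimated. The paper resolves this self-referential structure by Gagliardo--Nirenberg interpolation, arriving at the superlinear inequality $Q_\varepsilon\le K\varepsilon^2\bigl(1+\varepsilon^{(q-4)/q}Q_\varepsilon^{2/q}\bigr)$, which is then solved via Young's inequality; your alternative suggestion of the multiplication $L_q(\Omega)\cdot W_2^1(\Omega)\hookrightarrow L_2(\Omega)$ would also close (the offending term enters with an extra factor $\varepsilon^2\cdot\varepsilon^{-1}=\varepsilon$ in front of $Q_\varepsilon$ and can be absorbed for $\varepsilon$ small), but as written your proposal neither carries out this absorption nor tracks that the $\varepsilon$-weights in $Q_\varepsilon$ make it possible. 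Until one of these two routes is executed, \eqref{z42} is not proved.
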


\begin{proof}[{\bf Proof}]
Fix $\varepsilon\in (0,\varepsilon_0)$ and $t\in [0,\tau^\varepsilon]$. It first follows from \eqref{ppsi1} that 
\begin{equation}
\left\| \Phie(t) \right\|_{L_\infty(\Omega)} \le 1\,, \label{z6}
\end{equation}
while \eqref{z1} and \eqref{z2} entail that the function 
$$
f_\varepsilon(t,x,\eta) := f_{u_\varepsilon(t)}(x,\eta) = \varepsilon^2\ \eta\ \left[ 2\ \left( \frac{\partial_x u_\varepsilon}{1+u_\varepsilon} \right)^2 - \frac{\partial_x^2 u_\varepsilon}{1+u_\varepsilon} \right](t,x)\,, \qquad (t,x,\eta)\in[0,\tau^\ve]\times\Omega\,,
$$ 
satisfies
\begin{align*}
\|f_\varepsilon(t)\|_{L_q(\Omega)} \le &\,  \varepsilon^2\ \left[ \frac{2}{\kappa_0^2}\ \left\| \partial_x u_\varepsilon(t) \right\|_{L_\infty(I)}\ \left\| \partial_x u_\varepsilon(t) \right\|_{L_q(I)} + \frac{1}{\kappa_0}\ \left\| \partial_x^2 u_\varepsilon(t) \right\|_{L_q(I)} \right] \\
\le & \left( \frac{2 K_1^2}{\kappa_0^2} + \frac{K_1}{\kappa_0} \right)\ \varepsilon^2\,.
\end{align*}
Therefore, by H\"older's inequality,
\begin{equation}
\|f_\varepsilon(t)\|_{L_p(\Omega)} \le 2^{(q-p)/qp}\ \|f_\varepsilon(t)\|_{L_q(\Omega)} \le K_3\ \varepsilon^2 \label{z7}
\end{equation}
for $p\in [1,q]$. From now on, the time $t$ plays no particular role anymore and is thus omitted in the notation. We multiply \eqref{23a} by $\Phie$, integrate over $\Omega$, and proceed as in \cite[Lemma~11]{LaurencotWalker_ARMA} to obtain
\begin{align*}
\int_\Omega f_\e\ \left( 1 - \partial_\eta\Phie \right)\ \Phie\ \rd (x,\eta) = &\, \e^2\ \int_\Omega \left( \partial_x\Phie - \eta\ \frac{\partial_x u_\e}{1+u_\e}\ \partial_\eta\Phie \right)^2\ \rd (x,\eta) \\
& + \int_\Omega \frac{|\partial_\eta\Phie|^2}{(1+u_\e)^2}\ \rd (x,\eta)
\end{align*}
To estimate the right-hand side of the above identity from below, we use the elementary inequality $(r-s)^2 \ge (r^2/2)-s^2$, \eqref{z1}, and \eqref{z3} to obtain
\begin{align*}
\int_\Omega f_\e\ \left( 1 - \partial_\eta\Phie \right)\ \Phie\ \rd (x,\eta) \ge & \frac{\varepsilon^2}{2}\ \left\| \partial_x\Phie \right\|_{L_2(\Omega)}^2 - \e^2\ \left\| \partial_x u_\varepsilon \right\|_{L_\infty(I)}^2\ \int_\Omega  \frac{\left| \partial_\eta\Phie \right|^2}{(1+u_\e)^2}\ \ \rd (x,\eta) \\
& + \int_\Omega \frac{|\partial_\eta\Phie|^2}{(1+u_\e)^2}\ \rd (x,\eta) \\
\ge & \frac{\varepsilon^2}{2}\ \left\| \partial_x\Phie \right\|_{L_2(\Omega)}^2 + \frac{1}{2}\ \int_\Omega \frac{|\partial_\eta\Phie|^2}{(1+u_\e)^2}\ \rd (x,\eta) \\
\ge & \frac{\varepsilon^2}{2}\ \left\| \partial_x\Phie \right\|_{L_2(\Omega)}^2 + \frac{1}{2}\ \left\| \partial_\eta\Phie \right\|_{L_2(\Omega)}^2\,.
\end{align*}
Next, thanks to \eqref{z6}, \eqref{z7}, and H\"older's inequality, we can estimate the left-hand side of the above inequality and obtain
\begin{align*}
\varepsilon^2\ \left\| \partial_x\Phie \right\|_{L_2(\Omega)}^2 +  \left\| \partial_\eta\Phie \right\|_{L_2(\Omega)}^2 & \le 2\ \left\| f_\varepsilon \right\|_{L_2(\Omega)}\ \left\| 1 - \partial_\eta\Phie \right\|_{L_2(\Omega)}\ \left\| \Phie \right\|_{L_\infty(\Omega)} \\
& \le 2K_3\varepsilon^2\ \left( 1 + \left\| \partial_\eta\Phie \right\|_{L_2(\Omega)} \right) \\
& \le 2K_3\varepsilon^2 + \frac{1}{2}\ \left\| \partial_\eta\Phie \right\|_{L_2(\Omega)}^2 + 2K_3^2\varepsilon^4\,,
\end{align*}
whence
\begin{equation}
\varepsilon^2\ \left\| \partial_x\Phie \right\|_{L_2(\Omega)}^2 +  \left\| \partial_\eta\Phie \right\|_{L_2(\Omega)}^2 \le K_4\ \varepsilon^2\,. \label{z8}
\end{equation}
Since $\Phie(x,1)=0$ for $x\in I$, we have $\|\Phie\|_{L^2(\Omega)}\le \sqrt{2}\ \left\| \partial_\eta\Phie \right\|_{L_2(\Omega)}$ and \eqref{z41} readily follows from this inequality and \eqref{z8}.

We next establish \eqref{z42}. For that purpose, we set $\zeta_\e := \partial_\eta^2\Phie$, $\omega_\e := \partial_x\partial_\eta\Phie$, and multiply \eqref{23} by $\zeta_\varepsilon$. After integrating over $\Omega$, we proceed as in \cite[Lemma~11]{LaurencotWalker_ARMA} with the help of \cite[Lem.~4.3.1.2 $\&$~4.3.1.3]{Grisvard} to deduce that
\begin{align*}
\int_\Omega f_\e\ \left( 1 - \partial_\eta\Phie \right)\ \zeta_\e\ \rd (x,\eta) = \int_\Omega \left[ \frac{\zeta_\e^2}{(1+u_\e)^2} + \e^2\ \left( \omega_\e - \eta\ \frac{\partial_x u_\e}{1+u_\e}\ \zeta_\e \right)^2 \right]\ \rd (x,\eta)\,.
\end{align*}
Using once more the inequality $(r-s)^2 \ge (r^2/2)-s^2$ and \eqref{z3} to estimate the right-hand side of the above inequality from below, we find
\begin{align*}
\int_\Omega f_\e\ \left( 1 - \partial_\eta\Phie \right)\ \zeta_\e\ \rd (x,\eta) \ge & \int_\Omega \left[ \frac{\zeta_\e^2}{(1+u_\e)^2} + \frac{\e^2}{2}\ \omega_\e^2 - \varepsilon^2 \eta^2\ \frac{\left| \partial_x u_\e \right|^2}{(1+u_\e)^2}\ \zeta_\e^2 \right]\ \rd (x,\eta) \\
\ge & \int_\Omega \left[ \frac{1}{2}\ \frac{\zeta_\e^2}{(1+u_\e)^2} + \frac{\e^2}{2}\ \omega_\e^2 \right]\ \rd (x,\eta) \\
\ge & \frac{1}{2}\ \left( \left\| \zeta_\varepsilon \right\|_{L_2(\Omega)}^2 + \varepsilon^2\ \left\| \omega_\varepsilon \right\|_{L_2(\Omega)}^2 \right)\,.
\end{align*}
Introducing $Q_\varepsilon := \sqrt{\left\| \zeta_\varepsilon \right\|_{L_2(\Omega)}^2 + \varepsilon^2\ \left\| \omega_\varepsilon \right\|_{L_2(\Omega)}^2}$, we infer from H\"older's inequality, \eqref{z7}, and the previous inequality that (recall that $q>2$)
\begin{align*}
Q_\varepsilon^2 & \le 2\ \left\| f_\varepsilon \right\|_{L_q(\Omega)}\ \left\| 1 - \partial_\eta\Phie \right\|_{L_{2q/(q-2)}(\Omega)}\ \left\| \zeta_\varepsilon \right\|_{L_2(\Omega)} \\
& \le K\varepsilon^2\ \left( 1 + \left\| \partial_\eta\Phie \right\|_{L_{2q/(q-2)}(\Omega)} \right)\ Q_\varepsilon\,,
\end{align*}
that is,
\begin{equation}
Q_\varepsilon \le K\varepsilon^2\ \left( 1 + \left\| \partial_\eta\Phie \right\|_{L_{2q/(q-2)}(\Omega)} \right)\,. \label{z9}
\end{equation}
At this point, we infer from the Gagliardo-Nirenberg inequality \cite{Nirenberg} and \eqref{z8},that
\begin{align*}
\left\| \partial_\eta\Phie \right\|_{L_{2q/(q-2)}(\Omega)} & \le K\ \left\| \partial_\eta\Phie \right\|_{W_2^1(\Omega)}^{2/q}\ \left\| \partial_\eta\Phie \right\|_{L_2(\Omega)}^{(q-2)/q} \\
& \le K\ \left( \left\| \partial_\eta\Phie \right\|_{L_2(\Omega)}^2 + \left\| \omega_\varepsilon \right\|_{L_2(\Omega)}^2 + \left\| \zeta_\varepsilon \right\|_{L_2(\Omega)}^2 \right)^{1/q}\ \varepsilon^{(q-2)/q} \\
& \le K \varepsilon^{(q-4)/q}\ \left( \varepsilon^4 + \varepsilon^2\ \left\| \omega_\varepsilon \right\|_{L_2(\Omega)}^2 + \varepsilon^2\ \left\| \zeta_\varepsilon \right\|_{L_2(\Omega)}^2 \right)^{1/q} \\
& \le K \varepsilon^{(q-4)/q}\ \left( \varepsilon^{4/q} + Q_\varepsilon^{2/q} \right) \\
& \le K\ \left( \varepsilon + \varepsilon^{(q-4)/q}\  Q_\varepsilon^{2/q} \right)\,.
\end{align*}
Inserting this estimate in \eqref{z9} leads us to
\begin{align*}
Q_\varepsilon & \le K\varepsilon^2\ \left( 1 + \varepsilon + \varepsilon^{(q-4)/q}\  Q_\varepsilon^{2/q} \right) \le K\varepsilon^2 + K \varepsilon^{(3q-4)/q}\ Q_\varepsilon^{2/q} \\
& \le K\varepsilon^2 + \frac{2}{q}\ Q_\varepsilon + 
K \varepsilon^{(3q-4)/(q-2)}\,,
\end{align*}
whence
$$
Q_\varepsilon \le K\varepsilon^2\ \left( 1 + \varepsilon^{q/(q-2)} \right) \le K\varepsilon^2\,,
$$
and the proof of \eqref{z42} is complete. 

As a consequence of \eqref{z41} and \eqref{z42}, we have $\left\| \partial_\eta\Phie \right\|_{W_2^1(\Omega)}\le K\varepsilon$ and the properties of the trace operator readily give \eqref{z5}, see \cite[Thm.~1.5.1.3]{Grisvard}. 
\end{proof}

{ A first consequence of Lemma~\ref{le:z1} is that $\tau^\varepsilon$ (and thus also $T_m^\varepsilon$) does not collapse to zero as $\varepsilon\to 0$, so that the solutions $(u_\varepsilon,\psi_\varepsilon)_{\varepsilon\in (0,\varepsilon_0)}$ to \eqref{u}-\eqref{bcpsi} have a common interval of existence.
\begin{lem}\label{le.pam1}
\begin{itemize}
\item[(i)] There is $\tau>0$ depending only on $q$, $\lambda$, and $\kappa$ such that $\tau^\varepsilon\ge \tau$ for all $\varepsilon\in (0,\varepsilon_0)$.
\item[(ii)] There is $\Lambda:=\Lambda(\kappa)>0$ such that $\tau^\varepsilon=T_m^\ve=\infty$ for all $\ve\in (0,\ve_0)$ provided $\lambda\in (0,\Lambda)$.
\end{itemize}
\end{lem}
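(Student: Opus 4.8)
The plan is to rerun the a priori estimates behind the proofs of Theorem~\ref{A} and Theorem~\ref{TD}(i), but now with all constants independent of $\varepsilon$; Lemma~\ref{le:z1} is precisely what makes this $\varepsilon$-uniformity available. Throughout I fix $\sigma$ with $1/2-1/q<2\sigma<1/2$ as in the proof of Theorem~\ref{A}, and recall that $\kappa_0=\kappa/(2M)$, with $M\ge 1$ the constant from \eqref{9}.

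The first and crucial step is to establish, for some constant $K_5>0$ depending only on $q$ and $\kappa$, the bound
\[
\|g_\varepsilon(u_\varepsilon(t))\|_{W_{2,D}^{2\sigma}(I)}\le K_5\ ,\qquad t\in[0,\tau^\varepsilon]\ ,\quad \varepsilon\in(0,\varepsilon_0)\ .
\]
To get it I would write $g_\varepsilon(u_\varepsilon(t))=\frac{1+\varepsilon^2(\partial_x u_\varepsilon)^2}{(1+u_\varepsilon)^2}\,\bigl(1+\partial_\eta\Phi_\varepsilon(t,\cdot,1)\bigr)^2$ and combine three ingredients: by \eqref{z5} the function $\partial_\eta\Phi_\varepsilon(t,\cdot,1)$ is bounded (in fact $O(\varepsilon)$) in $W_2^{1/2}(I)$, uniformly for $\varepsilon\in(0,\varepsilon_0)$ and $t\in[0,\tau^\varepsilon]$; by \eqref{z1}--\eqref{z2} the prefactor $\frac{1+\varepsilon^2(\partial_x u_\varepsilon)^2}{(1+u_\varepsilon)^2}$ is bounded in $W_q^1(I)$; and the pointwise multiplications $W_2^{1/2}(I)\cdot W_2^{1/2}(I)\hookrightarrow W_2^{2\sigma_1}(I)$ and $W_q^1(I)\cdot W_2^{2\sigma_1}(I)\hookrightarrow W_2^{2\sigma}(I)=W_{2,D}^{2\sigma}(I)$ for $2\sigma<2\sigma_1<1/2$ from \cite[Thm.~4.1]{AmannMultiplication} then close the estimate, exactly as in the proof of Proposition~\ref{L1}.

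With this in hand, the second step is purely the a priori estimate. Inserting the variation-of-constants formula $u_\varepsilon(t)=e^{-tA}u^0-\lambda\int_0^te^{-(t-s)A}g_\varepsilon(u_\varepsilon(s))\,\rd s$ into \eqref{9}, using $u^0\in S_q(\kappa)$, and arguing as for \eqref{11} and \eqref{13} with $c_{11}(\kappa,\varepsilon)$ replaced by $K_5$, I obtain for $t\in[0,\tau^\varepsilon]$ that $\|u_\varepsilon(t)\|_{W_{q,D}^2(I)}\le M/\kappa+\lambda MK_5\,\mathcal{I}(t)$ and, since $-1+\kappa\le 0$ is a subsolution (so $e^{-tA}u^0\ge-1+\kappa$ by the comparison principle), $g_\varepsilon\ge 0$, the heat semigroup is positivity preserving, and $W_{q,D}^2(I)\hookrightarrow L_\infty(I)$ with constant $2$, that $u_\varepsilon(t,x)\ge-1+\kappa-2\lambda MK_5\,\mathcal{I}(t)$ for $x\in I$, with $\mathcal{I}$ as in \eqref{volvic}. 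For (i), since $\mathcal{I}(t)\to0$ as $t\to0$, I pick $\tau=\tau(q,\lambda,\kappa)>0$ so small that $\lambda MK_5\,\mathcal{I}(\tau)<M/\kappa$ and $2\lambda MK_5\,\mathcal{I}(\tau)<\kappa-\kappa_0$; then both displays place $u_\varepsilon(t)$ strictly inside $S_q(\kappa_0)$ for $t\in[0,\min\{\tau,\tau^\varepsilon\}]$. If $\tau^\varepsilon<\tau$ were true, then either $\tau^\varepsilon<T_m^\varepsilon$, in which case continuity of $t\mapsto u_\varepsilon(t)$ and maximality of $\tau^\varepsilon$ force $u_\varepsilon(\tau^\varepsilon)\in\overline{S}_q(\kappa_0)\setminus S_q(\kappa_0)$, contradicting the previous sentence, or $\tau^\varepsilon=T_m^\varepsilon$, in which case $u_\varepsilon(t)\in\overline{S}_q(\kappa_0)$ on $[0,T_m^\varepsilon)$ forces $T_m^\varepsilon=\infty$ by Theorem~\ref{A}(ii), again contradicting $\tau^\varepsilon<\tau<\infty$; hence $\tau^\varepsilon\ge\tau$. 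For (ii), I run the same argument with $\mathcal{I}(\infty)$ in place of $\mathcal{I}(\tau)$: choosing $\Lambda(\kappa)>0$ so that $\lambda MK_5\,\mathcal{I}(\infty)<M/\kappa$ and $2\lambda MK_5\,\mathcal{I}(\infty)<\kappa-\kappa_0$ whenever $\lambda<\Lambda(\kappa)$ keeps $u_\varepsilon(t)$ in $S_q(\kappa_0)$ for every $t\in[0,\tau^\varepsilon]$ and every $\varepsilon\in(0,\varepsilon_0)$, whence $\tau^\varepsilon=T_m^\varepsilon=\infty$.

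I expect the whole difficulty to be concentrated in the first step. All the elliptic bounds in Section~\ref{Sec2}, and in particular \eqref{10a}, carry $\varepsilon$-dependent constants, and the contraction/a priori scheme cannot be made $\varepsilon$-free without the $\varepsilon$-uniform (and vanishing) control of $\partial_\eta\Phi_\varepsilon(\cdot,1)$ in $W_2^{1/2}(I)$ supplied by Lemma~\ref{le:z1}. Once Step~1 is secured, Step~2 is a verbatim repetition of the fixed-point argument from the proofs of Theorem~\ref{A} and Theorem~\ref{TD}(i) with $\varepsilon$-independent bookkeeping.
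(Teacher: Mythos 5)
Your proposal is correct and follows essentially the same route as the paper: the key step is the $\varepsilon$-uniform bound $\|g_\varepsilon(u_\varepsilon(t))\|_{W_{2,D}^{2\sigma}(I)}\le K_5$ obtained from Lemma~\ref{le:z1} (in particular \eqref{z5}) together with \eqref{z1}--\eqref{z2} and the pointwise multiplication results, followed by the variation-of-constants estimates with \eqref{9} and the smallness of $\mathcal{I}(\tau)$ (resp.\ of $\lambda\,\mathcal{I}(\infty)$ for part (ii)) to keep $u_\varepsilon(t)$ in $\overline{S}_q(\kappa_0)$. Your bookkeeping of the constants matches the paper's conditions \eqref{zui}, and your slightly more explicit continuation argument at $\tau^\varepsilon$ is a harmless elaboration of what the paper leaves implicit.
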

\begin{proof}[{\bf Proof}]
Owing to \eqref{z1}, \eqref{z2} and \eqref{z5}, we may argue as at the end of the proof of Proposition~\ref{L1} to conclude that, fixing $2\sigma\in (1/2-1/q,1/2)$, there is $K_5>0$ such that
\begin{equation}
\left\| g_\varepsilon(u_\e(t)) \right\|_{W_2^{2\sigma}(I)} \le K_5\ , \qquad t\in [0,\tau^\varepsilon]\ . \label{pam3}
\end{equation}
As in the proof of \eqref{11} and \eqref{13}, we infer from \eqref{9}, \eqref{pam3}, the fact that $u^0\in S_q(\kappa)$, and the Variation-of-Constant formula that, for $t\in [0,\tau^\ve]$, 
\begin{align}
\|u_\varepsilon(t)\|_{W_{q,D}^{2}(I)} & \le  M\ \|u^0\|_{W_{q,D}^2(I)} + \lambda\ M\ \int_0^t e^{-\omega(t-s)}\ (t-s)^{\sigma-1-\frac{1}{2}(\frac{1}{2}-\frac{1}{q})}\ \|g_\varepsilon(u_\varepsilon(s))\|_{W_{2,D}^{2\sigma}(I)}\,\rd s \nonumber \\
&\le  \frac{M}{\kappa}+\lambda\ M\ K_5\ \mathcal{I}(t)\ , \label{pam4}
\end{align}
and, using in addition the embedding of $W_{q,D}^2(I)$ in $L_\infty(I)$ with constant 2,
\begin{align}
\|u_\varepsilon(t)\|_{L_\infty(I)} \le & 1 - \kappa + 2\,\lambda\,  \int_0^t \left\| e^{-(t-s)A}\ g_\varepsilon(u_\varepsilon(s)) \right\|_{W_{q,D}^2(I)}\, \rd s \nonumber \\
\le & 1-\kappa +  2\,\lambda\,  M \,  \int_0^t e^{-\omega (t-s)}\ (t-s)^{\sigma-1-\frac{1}{2}(\frac{1}{2}-\frac{1}{q})}\ \|g_\varepsilon(u_\varepsilon(s))\|_{W_{2,D}^{2\sigma}(I)}\,\rd s \nonumber\\
\le & 1-\kappa +  2\,\lambda\, M\, K_5\ \mathcal{I}(t)\ , \label{pam5}
\end{align}
where $\mathcal{I}(t)$ is defined in \eqref{volvic}. Since $\mathcal{I}(t)\to 0$ as $t\to 0$, there exists $\tau>0$ which depends only on $q$ and $\kappa$ such that
\bqn\label{zui}
\mathcal{I}(t) < \frac{1}{\lambda \kappa K_5} \;\;\text{ and }\;\; \mathcal{I}(t) < \frac{(2M-1)\kappa}{4\lambda M^2 K_5} \;\;\text{ for all }\;\; t\in [0,\tau]\ .
\eqn
Thanks to this choice, we readily deduce from \eqref{pam4} and \eqref{pam5} that $\|u_\varepsilon(t)\|_{W_{q,D}^{2}(I)}  \le 1/\kappa_0$ and $u_\varepsilon(t)\le 1- \kappa_0$ for all $t\in [0,\tau]\cap [0,\tau^\varepsilon]$. In other words, $u_\varepsilon(t)\in\overline{S}_q(\kappa_0)$ for all $t\in [0,\tau]\cap [0,\tau^\varepsilon]$ and the definition of $\tau^\varepsilon$ implies that $\tau^\varepsilon\ge \tau$. Finally, setting
$$
\Lambda(\kappa):=\mathrm{min}\left\{\frac{1}{\kappa K_5 \mathcal{I}(\infty)}\,,\,\frac{(2M-1)\kappa}{4M^2K_5 \mathcal{I}(\infty)}\right\}\ ,
$$
and taking $\lambda\in (0,\Lambda(\kappa))$,
it follows that \eqref{zui} holds for any $\tau>0$ which entails that $\tau^\ve\ge \tau$ for any $\tau>0$.
\end{proof}
}

\begin{proof}[{\bf Proof of Theorem~\ref{Bq}}]
A straightforward consequence of \eqref{z5} and the continuous embedding of $W_2^{1/2}(I)$ in $L_{2q}(I)$ is that
\begin{equation}
\lim_{\varepsilon\to 0}\ \sup_{t\in [0,\tau]} \left\| \left| \partial_\eta\phi_\varepsilon(t,\cdot,1) \right|^2 - 1 \right\|_{L_q(I)} = 0\,. \label{z91}
\end{equation}
Since
\begin{equation}
\partial_t u_\e -\partial_x^2 u_\e = - \lambda \, g_\varepsilon(u_\e(t))\,,\qquad x\in I\,, \quad t\in [0,\tau]\ ,\label{z10}
\end{equation}
with $g_\varepsilon$ defined in Proposition~\ref{L1} and $\tau$ in Lemma~\ref{le.pam1}, it readily follows from \eqref{z1}, \eqref{z2}, Lemma~\ref{le:z1}, and the continuous embedding of $W_2^{1/2}(I)$ in $L_{2q}(I)$ that, for $t\in [0,\tau]$,
\begin{align*}
\left\| \partial_t u_\varepsilon(t) \right\|_{L_q(I)} \le & \left\| \partial_x^2 u_\varepsilon(t) \right\|_{L_q(I)} + \lambda\ \left\| \frac{1+\varepsilon^2 \left| \partial_x u_\varepsilon(t)\right|^2}{(1+u_\varepsilon(t))^2} \right\|_{L_\infty(I)}\ \left\| \partial_\eta \phi_\varepsilon(t,\cdot,1) \right\|_{L_{2q}(I)}^2 \\
& \le K_1 + \frac{9\lambda K}{8 \kappa_0^2}\ \left\| 1 + \partial_\eta \Phi_\varepsilon(t,\cdot,1) \right\|_{W_2^{1/2}(I)}^2 \\
& \le K\, (1+\lambda)\,.
\end{align*} 
Consequently, the family $(u_\ve)_{\ve\in (0,\varepsilon_0)}$ is bounded in $C^1([0,\tau],L_q(I))\cap C([0,\tau],W_q^2(I))$ and thus, given $\theta\in ((q+1)/2q,1)$, we may extract a sequence $(\e_k)_{k\ge 1}$ of positive real numbers, $\varepsilon_k \to 0$, such that 
\begin{equation}\label{z11}
u_{\e_k} \longrightarrow u_0\quad\text{in} \quad C^{1-\theta}([0,\tau],W_q^{2\theta}(I))
\end{equation}
for some function $u_0\in C^{1-\theta}([0,\tau],W_q^{2\theta}(I))$. Since $\theta>(q+1)/2q$, we deduce from \eqref{z11} and the continuous embedding of $W_q^{2\theta}(I)$ in $W_\infty^1(I)$ that
\begin{equation}\label{z12}
u_{\e_k} \longrightarrow u_0\quad\text{in} \quad C([0,\tau],W_\infty^{1}(I))\,.
\end{equation}
A first consequence of \eqref{z1} and \eqref{z12} is that $-1+\kappa_0\le u_0(t,x)\le 0$ for all $(t,x)\in [0,\tau]\times [-1,1]$. Next, combining \eqref{z91} and \eqref{z12} ensures that
\begin{equation*}
g_{\e_k}(u_{\e_k}(t))=\frac{1 + \e_k^2\left( \partial_x u_{\e_k}(t) \right)^2}{\left( 1+u_{\e_k}(t) \right)^2}\ \left| \partial_\eta \phi_{\e_k}(t,.,1) \right|^2 \longrightarrow \frac{1}{(1+u_0(t))^2} \quad\text{ in }\quad C([0,\tau],L_q(I))\,.
\end{equation*} 
Recalling \eqref{z10}, classical stability properties of the linear heat equation entails that $u_0$ is a solution to the small aspect ratio equation \eqref{z0} and it is clearly the unique solution to \eqref{z0} which belongs to $\overline{S}_q(\kappa_0)$ for all $t\in [0,\tau]$. This implies in particular that not only a subsequence but the whole family $(u_\ve)_{\ve\in (0,\varepsilon_0)}$ converges towards $u_0$ in $C^{1-\theta}([0,\tau],W_q^{2\theta}(I))$, $\theta\in (0,1)$, as $\varepsilon\to 0$. Finally, the remaining assertion \eqref{z00} follows easily from Lemma~\ref{le:z1} as shown in the proof of \cite[Thm.~2]{LaurencotWalker_ARMA}. This completes the proof of Theorem~\ref{Bq}.
\end{proof}

\section*{Acknowledgments}

Part of this research was done while J.E. and Ch.W. were visiting the Institut de Math\'{e}ma\-ti\-ques de Toulouse, Universit\'{e} Paul Sabatier. The financial support and kind hospitality is gratefully acknowledged.



\end{document}